\newtheorem{theorem}{Theorem}[section]
\newtheorem{lemma}[theorem]{Lemma}
\newtheorem{note}[theorem]{Note}
\newtheorem{prop}[theorem]{Proposition}
\newtheorem{cor}[theorem]{Corollary}
\newtheorem*{Theorem1'}{Theorem 1'}
\theoremstyle{definition}
\theoremstyle{remark}
\numberwithin{equation}{section}
\newcommand \End{{\mathrm {End}}}
\newcommand \dm{{\mathrm {dim}}}
\newcommand \chr{{\mathrm {char}}}
\newcommand \gl{{\mathfrak {gl}}}
\renewcommand \sl{{\mathfrak {sl}}}
\newcommand \so{{\mathfrak {o}}}
\newcommand \sy{{\mathfrak {sp}}}
\newcommand \GL{{\mathrm {GL}}}
\newcommand \si{{\sigma}}
\newcommand \B{{\mathrm {Bil}}}
\begin{document}

\title {Reductivity of the Lie algebra of a bilinear form}

\author{S. Ruhallah Ahmadi}
\address{Department of Mathematics and Statistics, Univeristy of Regina, Canada}
\email{seyed.ruhala.ahmadi@gmail.com}

\author{Martin Chaktoura}
%\address{Department of Mathematics and Statistics, Univeristy of Regina, Canada}
\email{martin\_chaktoura@yahoo.com.ar}

\author{Fernando Szechtman}
%\address{Department of Mathematics and Statistics, Univeristy of Regina, Canada}
\email{fernando.szechtman@gmail.com}
%\thanks{The third author was supported in part by an NSERC discovery grant}

%\author{Martin Chaktoura}
%\address{Department of Mathematics and Statistics, Univeristy of Regina, Canada}
%\email{martin_chaktoura@yahoo.com.ar}

%    General info
\subjclass[2000]{17B20, 15A63}

%\date{January 1, 2001 and, in revised form, June 22, 2001.}

%\dedicatory{This paper is dedicated to our advisors.}

%\keywords{????}

\begin{abstract}
Let $f:V\times V\to F$ be a totally arbitrary bilinear form
defined on a finite dimensional vector space $V$ over a a field
$F$, and let $L(f)$ be the subalgebra of $\gl(V)$ of all
skew-adjoint endomorphisms relative to $f$. Provided $F$ is
algebraically closed of characteristic not 2, we determine all
$f$, up to equivalence, such that $L(f)$ is reductive. As a
consequence, we find, over an arbitrary field, necessary and
sufficient conditions for $L(f)$ to be simple, semisimple or
isomorphic to $\sl(n)$ for some $n$.
\end{abstract}

\maketitle

\section{Introduction}

As is well-known \cite{J} the complex Lie algebra associated to a
non-degenerate symmetric or skew-symmetric bilinear form is
semisimple, but for $D_1$, and simple, except for $D_1$ and $D_2$.
Moreover, $\sl(n)$ can be defined by means of one these forms only
when $n=2,4$. We wonder what becomes of these statements if we
allow totally arbitrary forms and fields. That is, given a field
$F$ and a bilinear form $f:V\times V\to F$, when is the Lie
algebra $L(f)$ simple or semisimple, and when is $L(f)$ isomorphic
to $\sl(n)$ for some $n$? In this paper we answer all of these
questions.

Throughout this paper we understand a Lie algebra to be reductive
if every solvable ideal is central. For finite dimensional Lie
algebras over a field of characteristic~0 this coincides with the
classical notion of reductivity, i.e., the adjoint representation
is completely reducible. Note, however, that $\gl(n)$ is reductive
according to our definition, except only when $\chr(F)=2=n$, but
it is so in the classical sense only when $\chr(F)\nmid n$ (see
\cite{B}, Chapter 1, \S 6, Exercise 24).

Returning to our prior questions, note that $\sl(n)$ fails to be
simple when $\chr(F)|n$, but is always reductive, except only when
$\chr(F)=2=n$ (see \cite{B}, Chapter 1, \S 6, Exercise 24). Thus
we orient our search of the bilinear forms $f$ such that
$L(f)\cong \sl(n)$ by first finding, in characteristic not 2, all
$f$ such that $L(f)$ is reductive, and stays reductive after
extending scalars to an algebraic closure of $F$, i.e., $L(f)$ is
absolutely reductive, like $\sl(n)$. This may be of interest on
its own right and plays, in fact, a decisive role in answering the
above questions.

It is necessary to divide the analysis of the reductivity of
$L(f)$ in various cases, depending on the nature of $f$. This
analysis can be found in Propositions \ref{jodd3}, \ref{p23},
\ref{zxc}, \ref{sumario1} and \ref{sumario2}. A summary of these
results when $F$ is algebraically closed of characteristic not 2
is given in Theorem \ref{todjur}. A consequence of Theorem
\ref{todjur} is that over an algebraically closed field of
characteristic not 2 the only $f$ such that $L(f)$ is simple
modulo its center, like $\sl(n)$, are in fact those for which
$L(f)$ is already simple.

Simiplicity or semisimiplicity are such strong conditions that
they only occur under optimal circumstances, as described in Theorems \ref{fsemi} and \ref{panda}.

By combining the above information Theorem \ref{tip} determines
all cases when $\sl(n)\cong L(f)$ in characteristic not 2.
The case $\chr(F)=2$ requires separate investigations, as seen
in Theorem \ref{pe4} for $n>2$, and Proposition \ref{pe5} for $n=2$.

In addressing the reductivity of $L(f)$, we first deal with the case
when $f$ is degenerate. The study of $L(f)$ in this case makes use of
a family of $L(f)$-invariant subspaces of~$V$, as found in
\cite{DS}. We also need to know \cite{G} all indecomposable
degenerate bilinear forms.

When $f$ is non-degenerate our initial approach makes use of the
asymmetry of~$f$ and its basic properties, as developed in
\cite{R}. The classification \cite{W} of all linear
operators that are the asymmetry of some bilinear form is required
as well.

A more detailed investigation of $L(f)$ relies on a list of
representatives \cite{HS} for the indecomposable bilinear forms
over algebraically closed fields of characteristic not~2. In
particular, we compute the structure of $L(f)$ for all $f$
indecomposable. Propositions \ref{x1} and \ref{x2} show that, for some indecomposable $f$, the Lie algebra
$L(f)$ is essentially isomorphic to the current truncated Lie
algebra $\gl(2)\otimes F[X]/(X^m)$, a fact that is not at all
clear from the definition of $L(f)$. For interesting recent work
on current Lie algebras and their representations see \cite{CR},
\cite{BW}.

The structure of the Lie algebra $L(f)$ requires considerably more
information than the structure of its subalgebras $L(f_1),\dots
L(f_m)$, where $f=f_1\perp\cdots\perp f_m$ and each $f_i$ is
indecomposable. This is already evident if $f$ is non-degenerate
and symmetric, in which case $L(f)$ is a classical Lie algebra,
while each $L(f_i)$ reduces to~0 in characteristic not 2. A
thorough study of the structure of the group $G(f)$ preserving $f$
is carried out in \cite{D}, \cite{S}.

%Once we determine when $L(f)$ is absolutely reductive, we are able
%to answer the question of when is $\sl(n)$ isomorphic to $L(f)$ in
%characteristic not 2, as well as finding necessary and sufficient
%conditions for $L(f)$ to be simple or semisimple  in arbitrary
%characteristic. The simplicity of $L(f)$ is such a strong
%condition that it turns out implying the classical conditions: $F$
%must have characteristic different from 2 and $f$ must be
%non-degenerate symmetric or skew-symmetric. The only deviation
%from the complex case occurs when $V$ has dimension 4 and $f$ is
%symmetric. In this $L(f)$ is simple if and only if the
%discriminant of $f$ is not a square \cite{B}.

%Thus only the simple Lie algebras of the form $L(f)$ are
%classical, and we refer the reader to \cite{S} for information on
%further families of simple Lie algebras in the prime
%characteristic case.

\section{Preliminaries}
\label{sectermnot}

We fix throughout a field $F$ of characteristic $\ell$, a non-zero
finite dimensional vector space $V$ over $F$, and a bilinear form
$f:V\times V\to F$.

If $B=\{v_1,\dots,v_n\}$ is a basis of $V$ the Gram matrix $S\in
M_n(F)$ of $f$ relative to~$B$ is defined by $S_{ij}=f(v_i,v_j)$.
If $g:W\times W\to F$ is also a bilinear form, then $f$ and $g$
are said to be equivalent if they admit the same Gram matrix.

The left and right radicals of $f$ are respectively defined by
$$
l(f)=\{u\in V\,|\, f(u,V)=0\},\; r(f)=\{u\in V\,|\, f(V,u)=0\}.
$$
It is not difficult to see that the following statements are
equivalent: $l(f)=0$; $r(f)=0$; $f$ admits an invertible Gram
matrix. When this happens we say that $f$ is non-degenerate. The
radical of $f$, denoted by $\mathrm{Rad}(f)$, is the intersection
of the left and right radicals of $f$.

%If $S$ and $T$ are
%Gram matrices of $f$ and $g$ relative to any bases, then $f$ and
%$g$ are equivalent if and only if $S$ and $T$ are congruent, i.e.
%$T=Y'SY$ for some $Y\in\GL_n(F)$, where $Y'$ stands for the
%transpose of $Y$.

%We write $\End(V)$ for the associative algebra of all
%endomorphisms of $V$ and $\gl(V)$ for its Lie algebra. For $n\geq
%1$ we let $M_n(F)$ stand for the associative algebra of all
%$n\times n$ matrices over $F$ and $\gl(n,F)$ or simply $\gl(n)$
%for its Lie algebra.

We write $L(f)$ for the subalgebra of $\gl(V)$ of all $x\in\gl(V)$
that are skew-adjoint with respect to $f$, i.e.,
$$
L(f)=\{x\in\gl(V)\,|\, f(xv,w)=-f(v,xw)\text{ for all }v,w\in V\}.
$$
Likewise, for $S\in\gl(n)$ let $L(S)$ stand for the subalgebra of
all $X\in\gl(n)$ satisfying $$X' S+SX=0,$$ where $X'$ is the
transpose of $X$. Let $M_B:\gl(V)\to\gl(n)$ be the isomorphism
that assigns to each $x\in\gl(V)$ its matrix $M_B(x)$ relative to
a basis $B$ of $V$. Then $M_B$ sends $L(f)$ onto $L(S)$, so
$L(f)\cong L(S)$.

Given subspaces $U_1,\dots U_m$ of $V$ we write
\begin{equation}
\label{desco}
V=U_1\perp\cdots \perp U_m\text{ or }f=f_{U_1}\perp\cdots\perp f_{U_m}
\end{equation}
to mean that $V=U_1\oplus\cdots\oplus U_m$ and $f(U_i,U_j)=0$ for
all $1\leq i\neq j\leq m$, where $f_{U_i}$ denotes the restriction
of $f$ to $U_i$.

If $V$ admits no decomposition $V=U\perp W$ except when $U=0$ or
$W=0$ we say that $f$ is indecomposable.

Let $J_n(\lambda)$ stand for the lower
triangular Jordan block with eigenvalue $\lambda\in F$, and write $J_n$ for
$J_n(0)$. It is known \cite{G},
\cite{DS2} that there is one and only one
indecomposable degenerate bilinear form, up to equivalence,
defined on a vector space of dimension $n$, namely one admitting $J_n$ as Gram matrix.

In general, it is clear that there exist subspaces $U_1,\dots,U_m$
of $V$ such that (\ref{desco}) holds and each $f_{U_i}$ is indecomposable.
Let $V_{\mathrm{even}}$ (resp. $V_{\mathrm{odd}}$) be the sum of all $U_i$ such that
$f_{U_i}$ has Gram matrix $J_n$ with $n$ even (resp. odd). Let
 $V_{\mathrm{ndeg}}$ be the sum of all other $U_j$. Then the sizes,
 including multiplicities, of all $J_n$ so arising, as well as
 the equivalence type of $f_{\mathrm{ndeg}}$, are uniquely
 determined by $f$ (see \cite{G}, \cite{DS}, \cite{DS2}). Here $f_{\mathrm{even}},f_{\mathrm{odd}},f_{\mathrm{ndeg}}$
 stand for the restrictions of $f$ to $V_{\mathrm{even}},
 V_{\mathrm{odd}}, V_{\mathrm{ndeg}}$, respectively, and
 $f_{\mathrm{ndeg}}$ is non-degenerate.

It is important to note \cite{DS} that $V_{\mathrm{even}},
 V_{\mathrm{odd}}, V_{\mathrm{ndeg}}$ are not uniquely determined
 subspaces of $V$ and, in particular, they are not
 $L(f)$-invariant. However, when $V_{\mathrm{odd}}=0$ then
 $V_{\mathrm{even}}$ and $V_{\mathrm{ndeg}}$ can be intrinsically
 defined from $f$ and they are $L(f)$-invariant.

If $f$ is non-degenerate its asymmetry $\si\in\GL(V)$ is defined by
$$
f(w,v)=f(v,\si w),\quad v,w\in V.
$$

\begin{theorem}\label{sergei} Suppose that $\ell\neq 2$ and that $F$ is algebraically closed.
Then an indecomposable non-degenerate bilinear form $f$ admits one
and only one of the following as Gram matrix, except for a
possible interchange of $\lambda$ and $\lambda^{-1}$:
$$
A_n(\lambda)=\left(
    \begin{array}{cc}
      0 & I_{n} \\
      J_{n}(\lambda) & 0 \\
    \end{array}
  \right),\lambda\neq (-1)^{n+1},
$$
$$
\Gamma_1=(1),\,\Gamma_2=\left(\begin{array}{cc}
0&-1\\
1&1
\end{array}\right),\,\Gamma_3= \left(\begin{array}{ccc}
0&0&1\\
0&-1&-1\\
1&1&0
\end{array}\right),\,\Gamma_4= \left(\begin{array}{cccc}
0&0&0&-1\\
0&0&1&1\\
0&-1&-1&0\\
1&1&0&0
\end{array}\right),\dots
%\Gamma_5= \left(\begin{array}{ccccc}
%0&0&0&0&1\\
%0&0&0&-1&-1\\
%0&0&1&1&0\\
%0&-1&-1&0&0\\
%1&1&0&0&0
%\end{array}\right),
$$
In the case of $A_n(\lambda)$ the asymmetry of $f$ has two
elementary divisors, namely $(X-\lambda)^n,(X-\lambda^{-1})^n$,
and in the case of $\Gamma_n$ a single elementary divisor, namely
$(X-1)^n$ if $n$ is odd and $(X+1)^n$ if $n$ is even.
\end{theorem}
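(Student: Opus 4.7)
The strategy is to reduce the classification to the Jordan structure of the asymmetry $\si=S^{-1}S^{T}$ and then to exhibit explicit representatives for each admissible structure.

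The first step is to set up a dictionary between the structure of $f$ and that of $\si$. Because the defining relation $f(w,v)=f(v,\si w)$ shows that every $f$-orthogonal decomposition $V=U\perp W$ is automatically $\si$-stable, indecomposability of $f$ imposes strong restrictions on the Jordan form of $\si$. Moreover, for eigenvalues $\la,\m$ of $\si$ with $\la\m\neq 1$, the corresponding generalized eigenspaces are $f$-orthogonal, so only blocks whose eigenvalues pair under $\la\leftrightarrow\la^{-1}$ interact nontrivially through~$f$. This leads one to expect, and Williamson's classification~\cite{W} confirms, that for $\la\neq\pm 1$ the blocks of $\si$ with eigenvalues $\la$ and $\la^{-1}$ must match in size, while for $\la=\pm 1$ the multiplicities of blocks of each size satisfy definite parity constraints.

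The second step is to observe that when $f$ is indecomposable, the Jordan structure of $\si$ reduces to one of exactly two patterns: either a conjugate pair $(X-\la)^n,(X-\la^{-1})^n$ with $\la\neq\la^{-1}$, or a single block $(X-\va)^n$ with the admissible sign $\va=(-1)^{n+1}$ dictated by the parity constraint. I would then realize each possibility by an explicit Gram matrix. For the first pattern, a direct block computation gives
\[
A_n(\la)^{-1}\,A_n(\la)^{T}=\begin{pmatrix} J_n(\la)^{-1} & 0 \\ 0 & J_n(\la)^{T}\end{pmatrix},
\]
whose elementary divisors are $(X-\la^{-1})^n$ and $(X-\la)^n$, as required. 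For the second pattern, I would verify by induction on $n$ (using the recursive shape of $\Gamma_n$, namely zeros above the anti-diagonal and $\pm 1$'s on and just below it) that the asymmetry of $\Gamma_n$ is a single Jordan block with eigenvalue $(-1)^{n+1}$. Indecomposability of both candidate forms follows from the minimality of their Jordan patterns: any $f$-orthogonal splitting would split $\si$ into two summands each still carrying a complete conjugate pair (case one) or each carrying a single block (case two), which is impossible. The exclusion $\la\neq(-1)^{n+1}$ in $A_n(\la)$ is precisely what keeps the two families disjoint.

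Uniqueness, up to the symmetry $\la\leftrightarrow\la^{-1}$, is automatic once one notes that equivalent forms have similar asymmetries, so the elementary divisor data of $\si$ is a complete invariant within each family. The main obstacle is the admissibility constraint on asymmetries, i.e.\ Williamson's input describing which operators occur as some $S^{-1}S^T$ and with what Jordan pairing; this is the nontrivial part of the classification, while the remaining verifications are explicit matrix computations and bookkeeping of Jordan data.
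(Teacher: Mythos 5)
Your proposal follows essentially the same route as the paper: reduce the classification to the similarity type of the asymmetry via the correspondence between equivalence of non-degenerate forms and similarity of asymmetries (Riehm, \cite{R}), invoke the classification of which invertible operators occur as asymmetries (the paper's \cite{W} is Wall, not Williamson, but it is the same input and the same nontrivial step you correctly isolate), and then verify the explicit representatives by direct computation, which the paper instead delegates to \cite{HS}. The block computation of $A_n(\lambda)^{-1}A_n(\lambda)'$ and the resulting elementary divisors are correct, so the proposal is sound and matches the paper's argument in substance.
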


\begin{proof} It is known \cite{R} that under the stated assumptions on $F$ the equivalence type of
a non-degenerate bilinear form and the similarity type of its asymmetry determine each other. The result now
follows from the classification, given in \cite{W}, Theorem 2.3.1, of all invertible operators that can possibly
be an asymmetry.
\end{proof}

The above list of representatives is taken from \cite{HS}.

\section{The eigenvalues of certain linear operators}
\label{seceigop}

For $A\in M_m (F)$ and $B\in M_n (F)$ we consider the linear maps
$$l_A, r_B :M_{m,n}(F)\to M_{m,n}(F)$$
given by $l_A (C)=AC,\;\;r_B (C)=CB$. Note that $l_A$ and $r_B$ commute.

This section finds the eigenvalues of $l_A r_B$ from those of $A$
and $B$, and uses this information to determine
$L(f_{\mathrm{even}})$. The form of these eigenvalues is
well-known \cite{F} and is a special case of Property $P$ of a
pair of matrices~\cite{SC}. We also look at a sharpened
description of these eigenvalues, which may be of independent
interest.

\begin{lemma}\label{eigenvalue}
Suppose all eigenvalues of $A$ and $B$ lie in $F$. Then all
eigenvalues of $l_A r_B$ lie in $F$. Furthermore, any such
eigenvalue is of the form $\alpha\beta$, where $\alpha$ is an
eigenvalue of $A$ and $\beta$ is an eigenvalue of $B$.
\end{lemma}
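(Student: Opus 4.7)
The plan is to reduce to the case where $A$ and $B$ are both upper triangular, and then exhibit an ordered basis of $M_{m,n}(F)$ in which $l_A$ and $r_B$ are simultaneously upper triangular, so that the claimed eigenvalues appear explicitly on the diagonal of $l_A r_B$.

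First I would observe that for any invertible $P\in \GL_m(F)$ and $Q\in \GL_n(F)$, the linear bijection $\varphi:M_{m,n}(F)\to M_{m,n}(F)$ sending $C\mapsto P^{-1}CQ$ intertwines $l_A$ with $l_{P^{-1}AP}$ and $r_B$ with $r_{Q^{-1}BQ}$, hence preserves the eigenvalues of both operators and of their product. Since all eigenvalues of $A$ and $B$ lie in $F$, a standard inductive argument (peel off an eigenvector, pass to a complementary subspace, iterate) furnishes $P$ and $Q$ making $P^{-1}AP$ and $Q^{-1}BQ$ upper triangular. I may therefore assume from the outset that $A$ and $B$ are upper triangular, with diagonal entries $\alpha_1,\ldots,\alpha_m$ and $\beta_1,\ldots,\beta_n$ respectively.

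Direct calculation on the standard basis $\{E_{ij}\}$ of $M_{m,n}(F)$ gives
\[
l_A(E_{ij})=\sum_{k\le i}A_{ki}\,E_{kj},\qquad r_B(E_{ij})=\sum_{l\ge j}B_{jl}\,E_{il},
\]
with diagonal coefficients $A_{ii}=\alpha_i$ and $B_{jj}=\beta_j$. I would then order the $E_{ij}$ so that $E_{ij}$ precedes $E_{i'j'}$ whenever $i<i'$, or $i=i'$ and $j>j'$. The tie-breaking is chosen precisely so that the constraint $k\le i$ for $l_A$ and the constraint $l\ge j$ for $r_B$ both keep the image weakly below $E_{ij}$ in the chosen order. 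Consequently $l_A$ and $r_B$, and therefore $l_A r_B$, are all upper triangular in this basis, and the diagonal entry of $l_A r_B$ at $E_{ij}$ is $\alpha_i\beta_j$. Since the eigenvalues of a triangular matrix are its diagonal entries, the lemma follows.

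There is no real obstacle here beyond the careful choice of ordering on $\{E_{ij}\}$: a naive lexicographic order triangularises $l_A$ but leaves $r_B$ with lower-triangular blocks, so the mixed ``$i$ ascending, $j$ descending'' order is what makes the two triangular structures compatible and allows the diagonal of the product to be read off as claimed.
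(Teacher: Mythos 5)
Your proof is correct, but it takes a more explicit route than the paper. The paper's argument is soft: it observes that $l_A$ and $A$ have the same minimal polynomial (and likewise $r_B$ and $B$), so all eigenvalues of $l_A$ and $r_B$ lie in $F$; since $l_A$ and $r_B$ commute they are simultaneously triangularizable, and the eigenvalues of the product are read off as products of eigenvalues of the factors. You instead triangularize $A$ and $B$ themselves and then construct an explicit ordered basis of $M_{m,n}(F)$ (ascending in the row index, descending in the column index) in which $l_A$ and $r_B$ are visibly triangular with diagonal entries $\alpha_i$ and $\beta_j$ at $E_{ij}$; your intertwining map $C\mapsto P^{-1}CQ$ and the tie-breaking in the order are both verified correctly. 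What your version buys is strictly more: since the diagonal of a triangular matrix gives the characteristic polynomial, your argument shows the characteristic polynomial of $l_Ar_B$ is $\prod_{i,j}(X-\alpha_i\beta_j)$, which is precisely the sharper multiplicity statement of Lemma \ref{eigenvalue2}; the paper proves that separately by a longer four-step argument via the Jordan--Chevalley decomposition. What the paper's proof buys is brevity and independence from any coordinate bookkeeping, at the cost of invoking the general simultaneous-triangularizability theorem for commuting operators rather than exhibiting the triangularizing basis.
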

\begin{proof}
Since $A$ and $l_A$ have the same minimal polynomial, all
eigenvalues of $l_A$ are in $F$, and likewise those of $r_B$.
Since $l_A$ and $r_B$ commute, they are simultaneously
triangularizable, and the result follows.
\end{proof}

We next sharpen Lemma \ref{eigenvalue} by means of the
Jordan-Chevalley decomposition. Recall that if $x\in\End(V)$ has
all eigenvalues in $F$ there are unique $d,n\in\End(V)$ such that
$x=d+n$; $d$ is diagonalizable; $n$ is nilpotent; $dn=nd$.
Moreover, $d,n$ are polynomials in $x$ with coefficients in $F$.
We refer to $x=d+n$ as the JC-decomposition of $x$.

The lists of eigenvalues given in Lemma \ref{eigenvalue2} below
allow for possible repetitions.

\begin{lemma}\label{eigenvalue2}
Suppose $A$ has eigenvalues $\alpha_1 ,\dots,\alpha_m$ and $B$ has
eigenvalues $\beta_1,\dots,\beta_n$, all in $F$. Then the $n\times m$
eigenvalues of $l_Ar_B$ in $F$ are
\[\alpha_1\beta_1,\dots,\alpha_1\beta_n,\alpha_2\beta_1\dots,\alpha_2\beta_n,\dots,\alpha_m\beta_1,\dots,\alpha_m\beta_n.
\]
\end{lemma}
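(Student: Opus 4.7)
The plan is to identify the Jordan--Chevalley decomposition of $l_A r_B$ and read off its eigenvalues from the diagonalizable summand, which can be diagonalized explicitly on the basis of elementary matrices.

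Write $A = D_A + N_A$ and $B = D_B + N_B$ for the JC-decompositions in $M_m(F)$ and $M_n(F)$. Because $D_A, N_A$ are polynomials in $A$ and $D_B, N_B$ are polynomials in $B$, and because $l_A$ commutes with $r_B$, the four operators $l_{D_A}, l_{N_A}, r_{D_B}, r_{N_B}$ on $M_{m,n}(F)$ pairwise commute. Since $l_A$ and $A$ share the same minimal polynomial (as noted in the proof of Lemma \ref{eigenvalue}) and $D_A$ is diagonalizable, the minimal polynomial of $l_{D_A}$ is squarefree and splits over $F$; hence $l_{D_A}$ is diagonalizable, and the same argument shows $r_{D_B}$ is. Two commuting diagonalizable operators are simultaneously diagonalizable, so $l_{D_A} r_{D_B}$ is diagonalizable.

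In the expansion
$$
l_A r_B \;=\; l_{D_A} r_{D_B} \;+\; \bigl( l_{D_A} r_{N_B} + l_{N_A} r_{D_B} + l_{N_A} r_{N_B} \bigr),
$$
each summand in parentheses contains a commuting nilpotent factor ($l_{N_A}$ or $r_{N_B}$) and is therefore nilpotent; as the three summands pairwise commute, their sum is nilpotent as well. Since $l_{D_A} r_{D_B}$ commutes with each of these terms, the uniqueness of the JC-decomposition forces the displayed identity to be the JC-decomposition of $l_A r_B$. In particular, the multiset of eigenvalues of $l_A r_B$ in $F$ coincides with that of the diagonalizable part $l_{D_A} r_{D_B}$.

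To compute the latter multiset, choose $P \in \GL_m(F)$ and $Q \in \GL_n(F)$ so that $P^{-1} D_A P$ and $Q^{-1} D_B Q$ are diagonal with entries $\al_1,\dots,\al_m$ and $\bt_1,\dots,\bt_n$ respectively. The $F$-linear automorphism $\Phi\colon C \mapsto P^{-1} C Q$ of $M_{m,n}(F)$ intertwines $l_{D_A} r_{D_B}$ with the operator $C \mapsto (P^{-1} D_A P)\, C\, (Q^{-1} D_B Q)$, which on the elementary matrix basis $\{E_{ij}\}$ acts by $E_{ij} \mapsto \al_i \bt_j E_{ij}$. This exhibits the $mn$ eigenvalues with the asserted multiplicities. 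The one technical point requiring care is the identification of the JC-decomposition above; once it is secured, the eigenvalue count is immediate from a basis of simultaneous eigenvectors.
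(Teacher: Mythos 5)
Your proof is correct and takes essentially the same route as the paper: both arguments identify the Jordan--Chevalley decomposition of $l_Ar_B$ from those of $A$ and $B$ (verifying that the cross terms form a commuting nilpotent part) so as to reduce to the diagonalizable case, and then exhibit an explicit eigenbasis. The only difference is cosmetic --- the paper diagonalizes $A$ alone and uses left eigenvectors of $B$, whereas you conjugate both sides to diagonal form and use the elementary matrices $E_{ij}$ as simultaneous eigenvectors.
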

\begin{proof} Let $A=S+N$ and $B=D+M$ be the JC-decompositions of $A$ and $B$.

\subsection*{Step 1} The JC-decomposition
of $l_Ar_B$ is
$$l_Ar_B=(l_Sr_D)+(l_Sr_M+l_Nr_D+l_Nr_M).$$

Indeed, since $S$ and $l_S$ have the same minimal polynomials and
$S$ is diagonalizable, so must be $l_S$. Likewise, $r_D$ is
diagonalizable. Since $l_S$ and $r_D$ commute, they are
simultaneously diagonalizable, so $l_Sr_D$ is diagonalizable. In addition,
$l_A$ and $r_B$ have JC-decompositions
$$l_A=l_S+l_N,\;\;\;r_B=r_D+r_M.$$
Since $l_A$ and $r_B$ commute and $l_S, l_N$ are polynomials in
$l_A$, it follows that $l_S, l_N$ commute with $r_B$, and hence
with its polynomials $r_D,r_M$. Therefore $l_S, l_N, l_D, l_M$
commute pairwise. Since $l_N,r_M$ are nilpotent,
$l_Sr_M+l_Nr_D+l_Nr_M $ is nilpotent.
\subsection*{Step 2}
We may assume that $A=S$ and $B=D$ are diagonalizable.

This is because both members of each of the following 3 pairs have
the same eigenvalues: $(A,S)$, $(B,D)$, $(l_Ar_B,l_Sr_D)$.
\subsection*{Step 3} We may assume that $A=\mathrm{diag}(\alpha_1,\dots,\alpha_m)$.

Indeed, there is $P\in GL_m(F)$ such that
$A^*=P^{-1}AP=\mathrm{diag}(\alpha_1,\dots,\alpha_m)$. Let
$B^*=P^{-1}BP$. Then both members of each pair $(A,A^*)$,
$(B,B^*)$, $(l_A r_B,l_{A^*}r_{B^*})$ have the same eigenvalues,
since for $Q=r_{P^{-1}}l_P$ we have
$$
l_{A^*}r_{B^*}=l_{P^{-1}}l_A l_P r_P r_B r_{P^{-1}}= l_{P^{-1}}r_P
l_Ar_B r_{P^{-1}} l_P =Q^{-1} l_A r_B Q.
$$
\subsection*{Step 4} We now conclude the proof under the assumptions that
$B$ is diagonalizable and
$A=\mathrm{diag}(\alpha_1,\dots,\alpha_m)$. Since $B$ and $B'$ are
similar, $B'$ is diagonalizable with eigenvalues
$\beta_1,\dots.\beta_n$. Let $\{v_1,\dots,v_n\}$ be a basis of the
column space $F^n$ formed by eigenvectors of $B'$ with eigenvalues
$\beta_1,\dots.\beta_n$. From $B' v_j =\beta_j v_j$ we infer $v_j'
B=\beta_j v_j'$ for all $1\leq j\leq n$. For $1\leq i\leq m$ and
$1\leq j\leq n$ let $C_{ij}$ be the $m\times n$ matrix all of
whose rows are 0 except for row $i$ which is equal to $v_j'$. Then
$$
AC_{ij}B=\alpha_i\beta_j C_{ij},
$$
and the result follows.
\end{proof}

For $0\neq p\in F[X]$, let $p^* \in F[X]$ be its adjoint
polynomial, defined by $$p^*(X)=X^{\mathrm{deg} (p)}p(1/X).$$
Thus, if $p(X)=a_0+a_1X+\cdots+a_{k-1}X^{k-1}+a_kX^k$ with $a_k\ne
0$, we have
$$
p^*(X) = a_0X^k + a_1X^{k-1} + \cdots + a_{k-1}X + a_k.
$$
It is easy to see that $(p_1p_2)^*=p_1^* p_2^*$, and if $p(0)\neq
0$ then $p^{**}=p$.

For $A\in M_m (F)$, we write $p_A$ for the minimal polynomial
of~$A$.

\begin{lemma}
\label{eigenvalue3} Let $A\in M_m (F)$ and $B\in M_n (F)$. Then 1
is an eigenvalue of $l_A r_B$ if and only if $p_A$ and $p^*_B$ are
not relatively prime.
\end{lemma}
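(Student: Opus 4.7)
The plan is to reduce the statement to an eigenvalue condition via Lemma \ref{eigenvalue2}, and then translate that eigenvalue condition into a root-sharing condition via the definition of $p^*$. Both the hypothesis ``$1$ is an eigenvalue of $l_Ar_B$'' and the conclusion ``$\gcd(p_A,p_B^*)\neq 1$'' are invariant under extension of scalars to the algebraic closure $\overline F$: eigenvalues living in $F$ are detected the same way in any extension, and two polynomials in $F[X]$ are relatively prime in $F[X]$ if and only if they are relatively prime in $\overline F[X]$, if and only if they have no common root in $\overline F$. Hence I will assume without loss of generality that $F=\overline F$, so all eigenvalues of $A$ and $B$ lie in $F$.

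Let $\alpha_1,\dots,\alpha_m$ be the eigenvalues of $A$ and $\beta_1,\dots,\beta_n$ those of $B$. By Lemma \ref{eigenvalue2}, $1$ is an eigenvalue of $l_Ar_B$ if and only if $\alpha_i\beta_j=1$ for some $i,j$; in particular such $\alpha_i$ and $\beta_j$ must be nonzero, and $\alpha_i=\beta_j^{-1}$.

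Next, I need to identify the roots of $p_B^*$. Factoring $p_B$ over $F$ as $p_B(X)=c\prod_{j}(X-\beta_j)$ (with multiplicities), the identity $p_B^*(X)=X^{\deg p_B}p_B(1/X)$ gives
$$
p_B^*(X)=c\prod_j(1-\beta_j X),
$$
so the roots of $p_B^*$ in $F$ are exactly the reciprocals $\beta_j^{-1}$ of the nonzero eigenvalues $\beta_j$ of $B$ (the zero eigenvalues of $B$ contribute trivial factors and drop the degree). Since the roots of $p_A$ are exactly the eigenvalues of $A$, the polynomials $p_A$ and $p_B^*$ share a root in $F$ if and only if $\alpha_i=\beta_j^{-1}$ for some $i,j$ with $\beta_j\neq 0$, that is, if and only if $\alpha_i\beta_j=1$ for some $i,j$. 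Combining this with the previous paragraph yields the desired equivalence.

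The only step requiring any care is handling the possibility that $B$ has $0$ as an eigenvalue, which lowers the degree of $p_B^*$; the explicit factorization above makes it transparent that such eigenvalues simply do not produce roots of $p_B^*$, which is consistent with the fact that they cannot yield a product $\alpha_i\beta_j=1$ either.
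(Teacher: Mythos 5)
Your proof is correct and follows essentially the same route as the paper's: both reduce the question to whether some product $\alpha\beta$ of eigenvalues of $A$ and $B$ (in an extension of $F$) equals $1$, via Lemmas \ref{eigenvalue} and \ref{eigenvalue2}, and then observe that $\alpha\beta=1$ is equivalent to $\alpha$ being a common root of $p_A$ and $p_B^*$. Your explicit factorization of $p_B^*$ and the up-front passage to $\overline F$ are just a slightly more detailed packaging of the same argument (note only that $p_B$ is the minimal, not characteristic, polynomial, which affects multiplicities but not the set of roots and hence not the conclusion).
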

\begin{proof}
Suppose 1 is an eigenvalue of $l_A r_B$. By Lemma
\ref{eigenvalue}, we have $1=\alpha\beta$, where $\alpha, \beta$
are eigenvalues of $A,B$ in some extension $K$ of $F$. Thus
$\alpha$ is a common root of $p_A$ and $p^*_B$ in $K$, so $p_A$
and $p^*_B$ are not relatively prime (in $F[X]$). Suppose
conversely that $p_A$ and $p^*_B$ are not relatively prime. There
is an extension $K$ of~$F$ and $\alpha\in K$ such that $\alpha\neq
0$, $p_A (\alpha)=0$ and $p_B (1/\alpha)=0$, so Lemma
\ref{eigenvalue2} yields that $1=\alpha\times 1/\alpha$ is an eigenvalue
of $l_A r_B$.
\end{proof}

\begin{cor}
\label{central} Let $A\in\gl(n)$, suppose $\gcd(p_A,p_A^*)=1$ and
set
$$
T=\left(%
\begin{array}{cc}
  0 & A \\
  I_n & 0\\
\end{array}%
\right).
$$
Then $L(T)\cong C_{\gl(n)}(A)$, the centralizer of $A$ in
$\gl(n)$.
\end{cor}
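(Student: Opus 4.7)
The plan is to compute $L(T)$ directly by partitioning an arbitrary element $X \in \gl(2n)$ into $n\times n$ blocks
$$X = \begin{pmatrix} P & Q \\ R & S \end{pmatrix}$$
and expanding the defining relation $X'T + TX = 0$ block-by-block. A straightforward multiplication gives the four equations
$$R' + AR = 0,\qquad P'A + AS = 0,\qquad S' + P = 0,\qquad Q'A + Q = 0.$$
From the third equation $P = -S'$; feeding this into the second yields $-SA + AS = 0$, so $S \in C_{\gl(n)}(A)$, and of course $P$ is then determined by $S$.

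The main step is showing that the off-diagonal blocks $Q$ and $R$ must vanish, and this is where the hypothesis $\gcd(p_A, p_A^*) = 1$ enters. Taking the transpose of $R' = -AR$ and substituting back produces the fixed-point equation
$$R = A R A' = l_A r_{A'}(R),$$
and similarly $Q = A' Q A = l_{A'} r_A(Q)$. So in each case, a nonzero solution would force $1$ to be an eigenvalue of $l_A r_{A'}$ (or $l_{A'} r_A$). Since $A$ and $A'$ have the same minimal polynomial, $p_{A'} = p_A$ and hence $p_{A'}^* = p_A^*$; by Lemma \ref{eigenvalue3} the hypothesis $\gcd(p_A, p_A^*) = 1$ rules out $1$ as an eigenvalue of either operator, and therefore $Q = R = 0$.

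To finish, I introduce the map
$$\varphi : C_{\gl(n)}(A) \to L(T),\qquad \varphi(S) = \begin{pmatrix} -S' & 0 \\ 0 & S \end{pmatrix}.$$
The preceding analysis shows $\varphi$ is a bijection. It remains to verify that it intertwines the brackets: a one-line computation using $[S_1', S_2'] = -[S_1, S_2]'$ gives
$$[\varphi(S_1), \varphi(S_2)] = \begin{pmatrix} -[S_1, S_2]' & 0 \\ 0 & [S_1, S_2] \end{pmatrix} = \varphi([S_1, S_2]),$$
so $\varphi$ is the desired isomorphism. I expect the only subtle point to be invoking Lemma \ref{eigenvalue3} correctly with $A'$ in place of $B$ and observing that $p_{A'}^* = p_A^*$; everything else is bookkeeping.
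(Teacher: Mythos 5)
Your proof is correct and follows essentially the same route as the paper: the identical block decomposition of $X'T+TX=0$, the same fixed-point equations $R=l_Ar_{A'}(R)$ and $Q=l_{A'}r_A(Q)$ killed by the eigenvalue lemma via $\gcd(p_A,p_A^*)=1$, and the same map $S\mapsto\mathrm{diag}(-S',S)$. Your explicit verification that this map preserves brackets is a small bit of bookkeeping the paper leaves implicit.
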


\begin{proof} Let
$$
X=\left(%
\begin{array}{cc}
  X_1 & X_2 \\
  X_3 & X_4 \\
\end{array}%
\right),
$$
where each $X_i\in\gl(n)$. Then $X\in L(T)$ if and only if
$$
AX_3=-X_3',X_1=-X_4', \;AX_4=-X_1'A,\; -X_2'A=X_2.
$$
From $AX_3=-X_3'$ we get $X_3'A'=-X_3$, so
$$
AX_3A'=X_3,
$$
i.e.,
$$
l_A r_{A'}(X_3)=X_3.
$$
Since $\gcd(p_{A},p_A^*)=1$, Lemma \ref{eigenvalue2} implies
$X_3=0$. Likewise we see that $X_2=0$. Thus $L(T)$ consists of all
$$
X=\left(%
\begin{array}{cc}
  -Y' & 0 \\
  0 & Y \\
\end{array}%
\right),
$$
where $Y\in \gl(n)$ and $YA=AY$, as required.
\end{proof}

As an application of Corollary \ref{central} we next find $L(f)$
when $V=V_{\mathrm{even}}$.

\begin{cor}
\label{3cr} Suppose that $f$ has Gram matrix
$$
S=J_{2r_1}\oplus\cdots\oplus J_{2r_t}
$$
relative to some basis $B$ of $V$. Let $n=r_1+\cdots+r_t$ and set
$$
A=J_{r_1}\oplus\cdots\oplus J_{r_t}.
$$
Then $L(f)\cong C_{\gl(n)}(A)$.
\end{cor}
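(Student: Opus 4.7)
The plan is to reduce this corollary to Corollary \ref{central} by showing that $f$ is equivalent to a bilinear form whose Gram matrix is of the block shape
$$T=\left(\begin{array}{cc} 0 & A \\ I_n & 0 \end{array}\right),$$
and then verifying that the hypothesis $\gcd(p_A,p_A^*)=1$ of Corollary \ref{central} is satisfied.

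First I would treat a single block. If $e_1,\dots,e_{2r}$ is the basis corresponding to one $J_{2r}$, so that $f(e_i,e_j)=\delta_{i,j+1}$, I would reorder the basis as $(e_1,e_3,\dots,e_{2r-1};e_2,e_4,\dots,e_{2r})$, separating odd-indexed from even-indexed vectors. A direct calculation of the pairings shows that in this reordered basis the Gram matrix of $f$ restricted to this block becomes
$$\left(\begin{array}{cc} 0 & J_r \\ I_r & 0 \end{array}\right),$$
the top-left zero block arising because pairings of odd-indexed (resp.\ even-indexed) vectors vanish, the bottom-left $I_r$ arising from the relations $f(e_{2i},e_{2i-1})=1$, and the top-right $J_r$ arising from the relations $f(e_{2i-1},e_{2i-2})=1$.

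Next I would assemble all the blocks. After performing the above shuffle within each $J_{2r_i}$ and then performing a global permutation that places every "odd-index" half first and every "even-index" half last, the resulting Gram matrix of $f$ is exactly
$$T=\left(\begin{array}{cc} 0 & A \\ I_n & 0 \end{array}\right),\qquad A=J_{r_1}\oplus\cdots\oplus J_{r_t}.$$
This is a basis change, so $L(f)\cong L(T)$.

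Finally, I would check the coprimality condition. Since $A$ is a direct sum of nilpotent Jordan blocks, its minimal polynomial is $p_A(X)=X^k$ where $k=\max_i r_i$. By the definition of the adjoint polynomial, $p_A^*(X)=1$, so $\gcd(p_A,p_A^*)=1$ trivially. Corollary \ref{central} then yields $L(T)\cong C_{\gl(n)}(A)$, finishing the proof. The only genuinely delicate point is the explicit permutation producing $T$ from $S$; once this is seen, the rest is immediate.
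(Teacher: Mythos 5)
Your proposal is correct and follows essentially the same route as the paper: reorder the basis (odd-indexed vectors first, then even-indexed) so that the Gram matrix becomes $T=\left(\begin{smallmatrix}0&A\\ I_n&0\end{smallmatrix}\right)$, and then invoke Corollary \ref{central} via $p_A^*=1$. The paper merely states the reordering without the explicit bookkeeping you supply, which is accurate.
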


\begin{proof} Reordering $B$ we obtain a basis $C$ of $V$ relative to which $f$
has Gram matrix
$$
T=\left(%
\begin{array}{cc}
  0 & A \\
  I_n & 0 \\
\end{array}%
\right).
$$
Since $p_A^*=1$, Corollary \ref{central} yields $L(f)\cong
C_{\gl(n)}(A)$.
\end{proof}

\section{Disposing of $V_{\mathrm{odd}}$}

In this section we show that in characteristic not 2 the Lie
algebra $L(f)$ is reductive if and only if either $f=0$, in which
case $L(f)\cong \gl(n,F)$ for $n=\dm(V)$, or else
$V_{\mathrm{odd}}=0$ and $L(f)\cong L(f_{\mathrm{even}})\oplus
L(f_{\mathrm{ndeg}})$, with reductive summands.

We refer the reader to \cite{DS}, \S 2 and \S 3, for the
definition and basic properties of the subspaces
$L^1(V),L^3(V),\dots$ and $R^1(V),R^3(V),\dots$, as well as
${}^\infty\! V$ of $V$, all of which are $L(f)$-invariant.

\begin{lemma}
\label{jodd} Suppose that $f$ has Gram matrix $S=J_{2n+1}$, $n\geq
1$, relative to some basis $B=\{e_1,\dots,e_{2n+1}\}$ of $V$. Let
$$
K=\{x\in L(f)\,|\, x ^{\infty}\!V=0,\; xV\subseteq
^{\infty}\!\!\!V \}.
$$
Then $K$ is an abelian ideal of $L(f)$ of dimension $n$.
Moreover, $K=L(f)\cap\sl(V)$ and there is $y\in L(f)$ such that
$L(f)=\langle y\rangle\ltimes K$, where $[y,x]=2x$ for all $x\in
K$.
\end{lemma}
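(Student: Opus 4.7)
The plan is to compute $L(f)$ explicitly as a matrix Lie algebra and then use the explicit form of ${}^\infty V$ from \cite{DS} to identify $K$. Writing the skew-adjointness condition $X'S + SX = 0$ with $S = J_{2n+1}$ and $m := 2n+1$ entry-wise yields $X_{j+1,i} + X_{i-1,j} = 0$ for all $1 \le i,j \le m$, with the convention that undefined indices give zero. This forces $X_{k,1} = 0$ for $k \ge 2$ (first column vanishes below the top) and $X_{k,m} = 0$ for $k \le m-1$ (last column vanishes above the bottom), together with the recursion $X_{a,b} = -X_{b-1,a-1}$ for $a,b \ge 2$.

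Iterating the recursion, the diagonal of $X$ is $(a,-a,a,-a,\dots,a)$ with $a := X_{1,1}$; and each first-row entry $X_{1,b}$ lies in an orbit whose entries are $\pm X_{1,b}$, terminating at an index that is forced to vanish by the last-column constraint exactly when $b \ge 3$ is odd. Hence $L(f)$ is parameterized by $a$ together with $c_2, c_4, \dots, c_{2n}$ (the even-indexed first-row entries), giving $\dim L(f) = n+1$. Since the diagonal has $n+1$ copies of $a$ and $n$ copies of $-a$, $\tr(X) = a$, and $L(f) \cap \sl(V)$ is the hyperplane $\{a = 0\}$, of dimension $n$.

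The identification ${}^\infty V = \mathrm{span}(e_1,e_3,\dots,e_{2n+1})$, obtained from \cite{DS}, now makes the rest transparent. Setting $a = 0$ in the parameterization, one reads off that the odd-indexed columns of $X$ vanish (so $X$ kills ${}^\infty V$) and the even-indexed rows of $X$ vanish (so $XV \subseteq {}^\infty V$); hence $L(f) \cap \sl(V) \subseteq K$. Conversely, any $x \in K$ satisfies $x^2 = 0$ because $xV \subseteq {}^\infty V \subseteq \ker x$, so $\tr(x) = 0$ and the reverse containment holds. Thus $K = L(f) \cap \sl(V)$ has dimension $n$. The same two defining conditions yield $xx' = 0$ for $x,x' \in K$, so $K$ is abelian; and the $L(f)$-invariance of ${}^\infty V$ gives $[y,x]V \subseteq {}^\infty V$ and $[y,x]{}^\infty V = 0$ for $y \in L(f)$ and $x \in K$, so $K$ is an ideal.

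Finally, take $y := \mathrm{diag}(1,-1,1,-1,\dots,1) \in L(f)$, i.e. the element with $a = 1$ and all $c_b = 0$. Then $y \notin K$, whence $L(f) = \langle y \rangle \ltimes K$. Observe that $y$ acts as $+1$ on ${}^\infty V$ and as $-1$ on $V/{}^\infty V$. For $x \in K$, which factors as a map $\bar{x} \colon V/{}^\infty V \to {}^\infty V$, one computes $[y,x]v = yxv - xyv = xv - (-xv) = 2xv$ for every $v \in V$, so $[y,x] = 2x$. The main obstacle is the careful bookkeeping of the recursion $X_{a,b} = -X_{b-1,a-1}$ to isolate the correct free parameters of $L(f)$; once that and the description of ${}^\infty V$ from \cite{DS} are in hand, the remaining verifications follow directly.
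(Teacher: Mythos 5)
Your proof is correct and takes essentially the same route as the paper: an explicit computation of $L(f)$ as a matrix Lie algebra, identification of $K$ as the trace-zero part via the description of ${}^\infty V$ from [DS], and the same grading element $y=\mathrm{diag}(1,-1,\dots,1)$ (which is exactly the paper's $y$ after its reordering of the basis). The only difference is organizational: you solve $X'S+SX=0$ entry-wise in the original basis via the recursion $X_{a,b}=-X_{b-1,a-1}$, whereas the paper first reorders the basis into odd/even blocks and uses the $L(f)$-invariance of the subspaces $L^i(V),R^i(V)$ from [DS], Theorem 3.1, to constrain the block form before solving; your bookkeeping of the orbits (free parameters $a,c_2,\dots,c_{2n}$, with odd-indexed first-row entries forced to vanish by the last-column constraint) checks out and yields the same dimension count.
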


\begin{proof} Reordering $B$ we obtain the basis
$C=\{e_1,e_3,\dots,e_{2n+1},e_2,e_4,\dots,e_{2n}\}$, relative to
which $f$ has Gram matrix
$$
T=\left(%
\begin{array}{cc}
  0 & J \\
  M & 0 \\
\end{array}%
\right),
$$
where $J$ has size $(n+1)\times n$ and is obtained by adding a
zero row on top of $I_n$, and $M$ has size $n\times (n+1)$ and is
obtained by adding a zero column at the end of $I_n$. Let
$$
X=\left(%
\begin{array}{cc}
  X_1 & X_2 \\
  X_3 & X_4 \\
\end{array}%
\right)
$$
be partitioned as $T$ and let $x\in \gl(V)$ be the operator
represented by $X$ with respect to $C$.

The subspaces $L^1(V),L^3(V),\dots$ and $R^1(V),R^3(V),\dots$ are
$L(f)$-invariant and are explicitly described in \cite{DS},
Theorem 3.1. Suppose $x\in L(f)$. Then $X_3=0$ with $X_1$ both
upper and lower triangular, i.e. $X_1$ diagonal. Moreover,
$$
X_1'J+JX_4=0,\; X_4'M+M X_1=0,\;X_2'J+MX_2=0.
$$
The first two conditions imply that $X_4$ is also diagonal and, in
fact, $X_1=a I_{n+1}$, $X_4=-aI_n$ for some $a\in F$. Furthermore,
the $X\in L(T)$ satisfying $X_1=0$ represent the $x\in L(f)$ that
belong to $K$, which is clearly an abelian ideal of $L(f)$. Let
$y\in L(f)$ have matrix $Y$, where $Y_1=I_{n+1}$ and $Y_2=0$. Then
$[y,x]=2x$ for all $x\in K$ and $L(f)=\langle y\rangle\ltimes K$.
It remains to show $\dm(K)=n$. This follows easily from
$X_2'J+MX_2=0$, which means the following: all diagonals of $X_2$
parallel to the two main diagonals have equal entries; diagonals
of $X_2$ equidistant to the line passing through the middle of the
two main diagonals have opposite entries.
\end{proof}

\begin{lemma}
\label{joddunoymedio} Suppose that $L(f)$ is reductive and
$\mathrm{Rad}(f)\neq 0$. Then $f=0$.
\end{lemma}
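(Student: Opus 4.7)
I plan to prove the contrapositive: assuming $f \neq 0$ and $W := \mathrm{Rad}(f) \neq 0$, I will exhibit a non-central abelian ideal of $L(f)$, contradicting reductivity. The elementary observation driving the argument is that since $W \subseteq l(f) \cap r(f)$ one has $f(W, V) = 0 = f(V, W)$, so any endomorphism of $V$ whose image lies in $W$ and which vanishes on $W$ lies automatically in $L(f)$.

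The first step is to introduce
\[
N = \{x \in \gl(V) : x(V) \subseteq W,\ x(W) = 0\} \cong \mathrm{Hom}(V/W, W),
\]
which by the above observation is contained in $L(f)$. The composition of any two elements of $N$ vanishes, so $N$ is abelian. The subspace $W$ is $L(f)$-invariant, because if $y \in L(f)$ and $w \in W$ then $f(yw,v) = -f(w,yv) = 0$ and $f(v,yw) = -f(yv,w) = 0$ for every $v \in V$. Using this, a short bracket computation gives $[L(f), N] \subseteq N$, so $N$ is an abelian ideal of $L(f)$.

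The second step is to produce a single element of $L(f)$ that witnesses non-centrality of $N$. I would fix any vector-space complement $V = U \oplus W$ and take $\pi$ to be the projection of $V$ onto $W$ along $U$. Then $\pi \in L(f)$ because $f(\pi v, v') = 0$ (as $\pi v \in W \subseteq l(f)$) and likewise $f(v, \pi v') = 0$. For any $x \in N$ one has $\pi \circ x = x$ (since $x(V) \subseteq W$, where $\pi$ restricts to the identity) and $x \circ \pi = 0$ (since $\pi(V) \subseteq W \subseteq \ker x$), giving $[\pi, x] = x$.

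To close, I observe that if $f \neq 0$ then $W \subsetneq V$, for otherwise $V = r(f)$ would force $f = 0$; consequently $N \neq 0$, and $[\pi, N] = N \neq 0$ shows that the abelian ideal $N$ is not central, contradicting reductivity. I do not foresee a serious obstacle: the whole argument rests on the identity $W = l(f) \cap r(f)$ together with a few routine bracket calculations, so the finer $L(f)$-invariant subspaces of \cite{DS} cited at the beginning of this section should not be needed for this specific lemma.
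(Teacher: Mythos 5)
Your proof is correct and essentially identical to the paper's: the paper also takes $K=\{x\in\gl(V)\mid xW=0,\ xV\subseteq W\}$ as the abelian ideal (justified by the same observation that $xV\subseteq\mathrm{Rad}(f)$ forces $x\in L(f)$) and uses $1_W$ extended by zero on a complement --- i.e.\ your projection $\pi$ --- as the witness that $K$ is non-central.
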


\begin{proof} Suppose, if possible, that $f\neq 0$, i.e. $\mathrm{Rad}(f)\neq V$. Let
$$
V=W\oplus U
$$
be a decomposition of $V$, where $W=\mathrm{Rad}(f)$ and $U\neq
0$. Let
$$
K=\{x\in\gl(V)\,|\, xW=0\text{ and }xV\subseteq W\}.
$$
Now for $x\in\gl(V)$ the condition $xV\subseteq W$ implies $x\in
L(f)$, so $K$ is an abelian ideal of $L(f)$. Moreover, any $x\in
\gl(W)$ can be extended in an obvious manner to an element of
$L(f)$. Let $x=1_W$, viewed as an element of $L(f)$, and let $y\in
K$ be any non-zero linear map that sends $W$ to 0 and $U$ to $W$.
Then
$$
[x,y]=xy=y\neq 0,
$$
contradicting the reductivity of $L(f)$.
\end{proof}

\begin{lemma}
\label{jodd2} Suppose that $L(f)$ is reductive,
$V_{\mathrm{odd}}\neq 0$ and $\ell\neq 2$. Then $f=0$.
\end{lemma}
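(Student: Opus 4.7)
The plan is to split into two cases according to whether $\mathrm{Rad}(f) = 0$, reducing each case either to an earlier lemma or to an explicit non-reductivity argument that produces a contradiction.

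First, if $\mathrm{Rad}(f) \neq 0$, then since $L(f)$ is reductive by hypothesis, Lemma \ref{joddunoymedio} gives $f = 0$ immediately. So the substantive case is $\mathrm{Rad}(f) = 0$, where the goal is to derive a contradiction with reductivity, showing this sub-case never occurs. Since $V_{\mathrm{odd}} \neq 0$, some indecomposable orthogonal summand of $f$ has Gram matrix $J_{2m+1}$. If $m=0$, that $1$-dimensional summand is orthogonal to every other summand and carries the zero form, so it lies in $\mathrm{Rad}(f)$, contradicting $\mathrm{Rad}(f)=0$. Therefore $m \geq 1$, and we can fix a decomposition $V = U \perp W$ with $f|_U$ of type $J_{2m+1}$.

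The key step is to lift the proof of Lemma \ref{jodd} from $U$ to the whole of $V$. Consider
$$
K = \{x \in L(f) : x \cdot {}^\infty\! V = 0,\; xV \subseteq {}^\infty\! V\},
$$
built from the intrinsic, $L(f)$-invariant subspace ${}^\infty\! V$ of \cite{DS}. A short direct check shows $K$ is an ideal of $L(f)$: for $z \in L(f)$ and $x \in K$, $[z,x]\cdot {}^\infty\! V \subseteq z\cdot 0 + x\cdot {}^\infty\! V = 0$ and $[z,x]V \subseteq z\cdot {}^\infty\! V + xV \subseteq {}^\infty\! V$. Moreover $K$ is abelian, since for $x_1,x_2\in K$ we have $x_1 x_2 V \subseteq x_1 \cdot {}^\infty\! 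V = 0$.

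To see that $K$ is not central, apply Lemma \ref{jodd} to $f|_U$: it furnishes a non-zero $x_0$ in the analogous local ideal $K_U \subseteq L(f|_U)$, together with $y_0 \in L(f|_U)$ satisfying $[y_0, x_0] = 2 x_0$. Extending by zero on $W$ yields $\tilde x_0, \tilde y_0 \in L(f)$, since $U \perp W$. Using the compatibility of the DS construction with orthogonal decompositions, namely ${}^\infty\! V = {}^\infty\! U \oplus {}^\infty\! W$, one checks $\tilde x_0 \cdot {}^\infty\! V = x_0\cdot {}^\infty\! U = 0$ and $\tilde x_0 V = x_0 U \subseteq {}^\infty\! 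U \subseteq {}^\infty\! V$, so $\tilde x_0 \in K$; then $[\tilde y_0, \tilde x_0] = 2\tilde x_0 \neq 0$ since $\ell \neq 2$ and $x_0 \neq 0$. Hence $K$ is a non-central abelian ideal of $L(f)$, contradicting reductivity. The main obstacle is verifying that the intrinsic subspace ${}^\infty\! V$ respects orthogonal decomposition $V = U \perp W$, as this is what legitimately transfers the local construction of Lemma \ref{jodd} into an honest ideal of the full $L(f)$.
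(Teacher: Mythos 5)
Your proposal is correct and follows essentially the same route as the paper: the paper likewise reduces to Lemma \ref{joddunoymedio} once all odd blocks are shown to be $J_1$, and handles a block $J_{2r_1+1}$ with $r_1\geq 1$ by taking the non-central element of the abelian ideal from Lemma \ref{jodd}, extending it by zero, and invoking the $L(f)$-invariance and compatibility of ${}^{\infty}V$ from \cite{DS}. Your only difference is organizational (splitting first on whether $\mathrm{Rad}(f)=0$ rather than on the size of the largest odd block), which changes nothing of substance.
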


\begin{proof} Let $f|_{V_{\mathrm{odd}}}$ have Gram matrix
$$
S=J_{2r_1+1}\oplus\cdots\oplus J_{2r_t+1},\quad r_1\geq\cdots\geq r_t\geq 0,
$$
relative to a basis $B$ of $V_\mathrm{odd}$. Suppose, if possible,
that $r_1\geq 1$. Let $C$ consist of the first $2r_1+1$ vectors of
$B$. We set
$$Y=\langle C\rangle,\quad g=f|_{Y\times Y}.$$
By Lemma \ref{jodd} there is $x\in L(g)$ such that $x
^{\infty}\!Y=0$, $x Y\subseteq  ^{\infty}\!\!\!Y$ and $x\notin
Z(L(g))$. Viewing $L(g)$ as a subalgebra of $L(f)$, the properties of $L(V)$ and
$R(V)$ given in \cite{DS} ensure
that $x ^{\infty}\!V=0$, $xV\subseteq  ^{\infty}\!\!\!V$ and
$x\notin Z(L(f))$. Thus $r_1=\cdots=r_t=0$, so
$\mathrm{Rad}(f)\neq 0$. Now apply Lemma \ref{joddunoymedio}.
\end{proof}

\begin{prop}
\label{jodd3} Suppose $\ell\neq 2$. Then $L(f)$ is reductive if
and only if either $f=0$, in which case $L(f)\cong \gl(n,F)$ for
$n=\dm(V)$, or else $V_{\mathrm{odd}}=0$ and
$L(f_{\mathrm{even}})$, $L(f_{\mathrm{ndeg}})$ are both reductive.
Moreover, in the latter case $L(f)\cong L(f_{\mathrm{even}})\oplus
L(f_{\mathrm{ndeg}})$.
\end{prop}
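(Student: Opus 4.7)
The plan is to assemble Lemma \ref{jodd2}, the intrinsic $L(f)$-invariant orthogonal decomposition $V=V_{\mathrm{even}}\oplus V_{\mathrm{ndeg}}$ that exists precisely when $V_{\mathrm{odd}}=0$ (recorded in the preliminaries, citing \cite{DS}), and the standard fact that a Lie-algebra direct sum of two ideals is reductive if and only if each summand is.

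First I would dispose of the case $f=0$: then $L(f)=\gl(V)\cong\gl(n,F)$ with $n=\dm(V)$, which is reductive in our sense whenever $\ell\neq 2$ by the note in the introduction. For the main implication, suppose $L(f)$ is reductive and $f\neq 0$. Then Lemma \ref{jodd2} forces $V_{\mathrm{odd}}=0$, so the preliminaries guarantee that $V_{\mathrm{even}}$ and $V_{\mathrm{ndeg}}$ are intrinsically defined, are $L(f)$-invariant, are mutually $f$-orthogonal, and add up to $V$.

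Next I would produce the isomorphism $L(f)\cong L(f_{\mathrm{even}})\oplus L(f_{\mathrm{ndeg}})$. Restriction gives a Lie-algebra homomorphism
$$
\Phi:L(f)\to L(f_{\mathrm{even}})\oplus L(f_{\mathrm{ndeg}}),\qquad x\mapsto (x|_{V_{\mathrm{even}}},\,x|_{V_{\mathrm{ndeg}}}),
$$
which is injective because $V=V_{\mathrm{even}}\oplus V_{\mathrm{ndeg}}$. For surjectivity, given $(y_1,y_2)$ in the codomain, the block-diagonal operator $y_1\oplus y_2\in\gl(V)$ lies in $L(f)$, since the skew-adjointness condition $f(xv,w)=-f(v,xw)$ decouples across the $f$-orthogonal summands, leaving exactly the two conditions $y_i\in L(f_{U_i})$ for $U_1=V_{\mathrm{even}}$, $U_2=V_{\mathrm{ndeg}}$.

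Finally, for the reductivity equivalence: if $L=L_1\oplus L_2$ is a direct sum of ideals in a Lie algebra, then any solvable ideal of $L_i$ is a solvable ideal of $L$, so reductivity of $L$ forces reductivity of each $L_i$; conversely, the radical and center of a direct sum of ideals are themselves the direct sums of the radicals and centers of the factors, so reductivity of each $L_i$ yields reductivity of $L$. The only step that needs external input, rather than being a direct calculation, is the $L(f)$-invariance of $V_{\mathrm{even}}$ and $V_{\mathrm{ndeg}}$ in the regime $V_{\mathrm{odd}}=0$; this is exactly what the preliminaries quote from \cite{DS}, so no real obstacle remains.
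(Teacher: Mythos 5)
Your proposal is correct and follows essentially the same route as the paper: Lemma \ref{jodd2} to force $V_{\mathrm{odd}}=0$, the intrinsic $L(f)$-invariant decomposition from \cite{DS} quoted in the preliminaries to get $L(f)\cong L(f_{\mathrm{even}})\oplus L(f_{\mathrm{ndeg}})$, and the standard fact that a direct sum of ideals is reductive iff each summand is. You merely spell out the details (injectivity/surjectivity of the restriction map, the projection argument for solvable ideals) that the paper dismisses as explained in \S\ref{sectermnot} or as obvious.
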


\begin{proof} If $f=0$ then $L(f)\cong \gl(n,F)$ is reductive (see \cite{B}, Chapter 1, \S 6,
Exercise 24). Suppose $f\neq 0$. If $L(f)$ is reductive then Lemma
\ref{jodd2} gives that $V_{\mathrm{odd}}=0$, in which case
$L(f)\cong L(f_{\mathrm{even}})\oplus L(f_{\mathrm{ndeg}})$ as
explained in \S\ref{sectermnot}, with both summands reductive.
Obviously, if these conditions hold, $L(f)$ is reductive.
\end{proof}

\section{Centralizers in the general Lie algebra}
\label{secredcent}

Let $x\in\gl(V)$ and set $L=C_{\gl(V)}(x)$. This section gives
necessary and sufficient conditions for $L$ to be reductive based
on the nature of $x$. This will be used later to determine the
reductivity of $L(f)$.

Much is known \cite{JN} about nilpotent orbits and centralizers of
nilpotent elements in semisimple Lie algebras over an
algebraically closed field of characteristic not 2. However,
we will not make use of this material, as we mostly deal with
arbitrary fields and, in addition, we can furnish elementary
arguments to suit all our needs.

We may use $x$ to view $V$ as a module for the polynomial algebra
$F[X]$. Thus we say that $x$ is cyclic if $V$ is cyclic as an
$F[X]$-module, which happens when the minimal and characteristic
polynomials of $x$ coincide, and we say that $x$ is semisimple if
$V$ is a semisimple $F[X]$-module, which is equivalent to the
minimal polynomial of $x$ having a multiplicity-free prime
factorization in $F[X]$.

If $p_1,\dots,p_t$ are the distinct monic irreducible factors of the
minimal polynomial of $x$ then the primary decomposition of $V$ is
$$
V=V_{p_1}\oplus\cdots\oplus V_{p_t},
$$
where for any $q\in F[X]$ we define
$$
V_q=\{v\in V\,|\, q^m\cdot v=0\text{ for some }m\geq 1\}.
$$
We abuse this notation and write
$$
V_\lambda=V_{X-\lambda},\quad \lambda\in F.
$$
%$$
%dsfgf
%$$
Let $x_i=x|_{V_{p_i}}$ for $1\leq i\leq t$. Then
$$
C_{\gl(V)}(x)\cong C_{\gl(V_{p_1})}(x_1)\oplus\cdots\oplus
C_{\gl(V_{p_t})}(x_t).
$$
Thus $C_{\gl(V)}(x)$ is reductive if and and only if every
$C_{\gl(V_{p_i})}(x_i)$ is reductive. We may thus assume that
$V=V_p$, where $p\in F[X]$ is monic and irreducible, so that
$E=F[X]/(p)$ is a field extension of $F$.

Now $V$ has a cyclic decomposition
$$
V=F[X]v_1\oplus\cdots\oplus F[X]v_r,
$$
where the minimal polynomials of $v_1,\dots,v_r$ with respect to
$x$ are $p^{m_1},\dots,p^{m_r}$, and $m_1\geq\cdots\geq m_r$.
These are the elementary divisors of $x$. In this case, we have
the following result.

\begin{prop}
\label{redcent} The Lie algebra $L$ is reductive if and only if
either $x$ is cyclic, in which case $L$ is abelian and
$\dm(L)=\dm(V)$, or $x$ is semisimple and $(r,\ell)\neq (2,2)$, in
which case $L\cong \gl(r,E)$ as Lie algebras over $F$.
\end{prop}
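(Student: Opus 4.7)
The plan is to analyze three mutually exhaustive cases according to $(r,m_1)$: (i) $x$ cyclic, i.e., $r=1$; (ii) $x$ semisimple but not cyclic, i.e., $r\geq 2$ and $m_1=1$; (iii) the remaining case $r\geq 2$ and $m_1\geq 2$. The first two will produce the reductive algebras described in the statement, while in the third I will exhibit a non-central solvable ideal of $L$, forcing non-reductivity.

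In case (i), $V\cong F[X]/(p^{m_1})$ and the endomorphism ring of a cyclic module over the PID $F[X]$ is generated as an algebra by the module's structure map, so $L=F[x]$. This is abelian of dimension $m_1\deg p=\dm(V)$, hence reductive. In case (ii), the identification $E=F[X]/(p)$ turns $V$ into an $E$-vector space of dimension $r$, and every $F[X]$-linear endomorphism is automatically $E$-linear, so $L\cong\End_E V\cong\gl(r,E)$ as $F$-Lie algebras. To conclude that this $F$-Lie algebra is reductive iff $(r,\ell)\neq(2,2)$, I would invoke the Bourbaki exercise on $\gl$ referenced in the introduction (applied over $E$) and transfer $E$-reductivity to $F$-reductivity by observing that the $E$-span of any $F$-Lie ideal $\mathfrak{a}$ of $\gl(r,E)$ is itself a Lie ideal with the same derived series, because $[aX,Y]=a[X,Y]$ for $a\in E$ and $X,Y\in\gl(r,E)$; hence a solvable $F$-ideal is contained in a solvable $E$-ideal, which is central when $E$-reductivity holds.

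The heart of the argument is case (iii). Since $L$ commutes with $p(x)$, the subspace $p(x)V$ is $L$-stable, and
\[
I=\{\,T\in L:TV\subseteq p(x)V\,\}
\]
is a two-sided associative ideal of $\End_{F[X]}(V)$, hence a Lie ideal of $L$. Iteration gives $T^k V\subseteq p(x)^k V$, which vanishes for $k\geq m_1$, so $I$ is associatively nilpotent and in particular Lie-solvable. The main obstacle is to produce an element of $I$ outside $Z(L)$: define $T\in\End_{F[X]}(V)$ by $T v_2=p^{m_1-1}(x)v_1$ and $T v_j=0$ for $j\neq 2$. This is well-defined as an $F[X]$-linear map because $p^{m_2}(x)\cdot p^{m_1-1}(x)v_1=p^{m_1+m_2-1}(x)v_1=0$, and $TV\subseteq p(x)V$ follows from $m_1\geq 2$. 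Taking $S\in L$ to be the projection onto $F[X]v_1$ along the other cyclic summands, a direct computation gives
\[
[S,T]v_2=S\,p^{m_1-1}(x)v_1=p^{m_1-1}(x)v_1\neq 0,
\]
where the non-vanishing uses that $v_1$ has annihilator $(p^{m_1})$. Hence $I$ is a non-central solvable Lie ideal of $L$, and $L$ fails to be reductive. The decisive use of $m_1\geq 2$ is in simultaneously placing $T$ inside $I$ (via $p^{m_1-1}(x)v_1\in p(x)V$) and keeping $Tv_2$ nonzero.
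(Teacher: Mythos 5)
Your proposal is correct and follows essentially the same route as the paper: identify $L$ with the Lie algebra of $\End_{F[X]}(V)$, compute it explicitly in the cyclic and semisimple cases, and exhibit a non-central solvable ideal when $r>1$ and $m_1>1$. The only (harmless) deviations are that in the last case you use the ideal $\{T\in L: TV\subseteq p(x)V\}$ with the map $v_2\mapsto p^{m_1-1}(x)v_1$, whereas the paper uses $\{g\in L: gW=0,\ gV\subseteq W\}$ with $W=\ker p(x)$ and the map $v_1\mapsto p^{m_2-1}\cdot v_2$, and that you spell out the transfer of reductivity of $\gl(r,E)$ from $E$-ideals to $F$-ideals, a point the paper leaves implicit.
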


\begin{proof} Note first of all that $L$ is the Lie algebra of the associative algebra
$\End_{F[X]}(V)$ of all $F[X]$-endomorphisms of $V$.

Suppose first $x$ is cyclic. It is well-known and easy to see that
in this case $\End_{F[X]}(V)=\{q(x)\,|\, q\in F[X]\}\cong
F[X]/(p^{m_1})$, so $L$ is abelian, and therefore reductive, of
dimension $\dm(V)$.

Suppose next $x$ is semisimple. By assumption there is a basis $B$
of $V$ relative to which the matrix of $x$, say $A$, is the direct
sum of $r$ copies of the companion matrix $C_p$ of $p$. Since the
only matrices commuting with $C_p$ are the polynomials in $C_p$,
we see that the centralizer of $A$, as an associative $F$-algebra,
is isomorphic to $M_r(E)$, hence its Lie algebra is $F$-isomorphic
to $\gl(r,E)$, which is reductive if and only if $(r,\ell)\neq (2,2)$.

Suppose finally that $x$ is neither cyclic nor semisimple, i.e.,
$r>1$ and $m_1>1$.

An element of $\End_{F[X]}(V)$ is nothing but an element of
$\End_{F}(V)$ satisfying $v_i\to w_i$, $1\leq i\leq r$, where
$p^{m_i}\cdot w_i=0$.

Let $W=\{v\in V\,|\, p\cdot v=0\}$. This is an $L$-invariant
subspace of $V$. Therefore
$$
J=\{g\in L\,|\, gW=0,\; gV\subseteq W\}
$$
is an ideal of $L$, clearly abelian. Consider the elements $g,h\in
L$ given by
$$
g(v_1)=p^{m_2-1}\cdot v_2,\; g(v_2)=\cdots=g(v_r)=0,
$$
$$
h(v_1)=v_1,\; h(v_2)=\cdots=h(v_r)=0.
$$
By definition $g\in J$, $gh(v_1)=p^{m_2-1}\cdot v_2\neq 0$ and
$hg(v_1)=0$. Thus $J$ is a non-central abelian ideal of $L$, so
$L$ is not reductive.
\end{proof}

\begin{note} It is easy to see that if $r>1$ and $(r,\ell)\neq (2,2)$ then
the Lie algebra $\gl(r,E)$ over~$F$ is absolutely reductive if and only if $p\in F[X]$
is separable.
\end{note}

Corollary \ref{3cr} and Proposition \ref{redcent} immediately
yield the following

\begin{prop}
\label{p23} Suppose $V=V_{\mathrm{even}}$ and let $m=\dm(V)/2$.
 Then $L(f)$ is reductive if and only if
either $f$ is indecomposable, in which case $L(f)$ is abelian of
dimension $m$, or else $f_{\mathrm{even}}$ has $m>1$ components,
all of type $J_2$, in which case $L(f)\cong \gl(m)$.
\end{prop}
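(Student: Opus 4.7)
The plan is to invoke Corollary~\ref{3cr} to present $L(f)$ as the centralizer of a specific nilpotent matrix in $\gl(m)$, and then to read off the conclusion from Proposition~\ref{redcent}. The paper already advertises the proposition as an immediate consequence of these two results, so the work is essentially one of translation.

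First I would decompose $f=f_{\mathrm{even}}$ into indecomposables, so that $f$ admits a Gram matrix $J_{2r_1}\oplus\cdots\oplus J_{2r_t}$ with $r_1\geq\cdots\geq r_t\geq 1$ and $r_1+\cdots+r_t=m$. By Corollary~\ref{3cr}, $L(f)\cong C_{\gl(m)}(A)$ where $A=J_{r_1}\oplus\cdots\oplus J_{r_t}\in\gl(m)$. Since $A$ is nilpotent, its minimal polynomial is $X^{r_1}$, a power of the single monic irreducible $p=X\in F[X]$; the primary decomposition of $F^m$ under $A$ has only one summand, and so Proposition~\ref{redcent} applies directly to $A$ with $E=F[X]/(X)=F$. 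The elementary divisors of $A$ are $X^{r_1},\dots,X^{r_t}$, so in the notation of Proposition~\ref{redcent} one has $r=t$ and $m_i=r_i$.

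Feeding this into Proposition~\ref{redcent} yields that $L(f)$ is reductive in exactly two situations. First, $A$ is cyclic, i.e.\ $t=1$, which is equivalent to $f$ being indecomposable; then $L(f)$ is abelian of dimension $r_1=m$. Second, $A$ is semisimple, i.e.\ $r_1=\cdots=r_t=1$, which forces $t=m$ and means every indecomposable component of $f$ has Gram matrix $J_2$; then $L(f)\cong\gl(t,F)=\gl(m,F)$. In all remaining cases, i.e.\ $t>1$ with $r_1>1$, the operator $A$ is neither cyclic nor semisimple and Proposition~\ref{redcent} denies reductivity.

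I expect no substantive obstacle; the argument is bookkeeping. The only point demanding care is matching the notational conventions: the $r$ of Proposition~\ref{redcent} is the number of elementary divisors, here $t$, and in the cyclic case the dimension appearing in the clause ``$\dm(L)=\dm(V)$'' of Proposition~\ref{redcent} refers to the space $F^m$ on which $A$ acts, giving exactly $m$ and not $2m=\dm(V)$ as one might momentarily fear.
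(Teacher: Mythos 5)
Your proposal is correct and is exactly the paper's argument: the paper offers no written proof, stating only that Corollary~\ref{3cr} and Proposition~\ref{redcent} ``immediately yield'' the result, and your translation (with $A=J_{r_1}\oplus\cdots\oplus J_{r_t}$ nilpotent, $r=t$, $m_i=r_i$, $E=F$) is the intended bookkeeping, including the correct observation that the cyclic case gives dimension $m$ rather than $2m$. The only point you silently drop is the proviso $(r,\ell)\neq(2,2)$ from Proposition~\ref{redcent}, but the statement of Proposition~\ref{p23} omits it as well, so your proof matches the paper.
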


\section{A canonical decomposition of $V_{\mathrm{ndeg}}$}

We assume in this section that $f$ is non-degenerate with asymmetry $\si$.
Viewing~$V$ as an $F[X]$-module via $\si$, we give a
decomposition of $L(f)$ as the direct sum of ideals associated to
the primary components of the $F[X]$-module~$V$. We also look at
the ideals of $L(f)$ corresponding to the primary components
$V_p$, where $p$ is an irreducible factor of the minimal polynomial $p_\si$
of $\si$ such that $\gcd(p,p^*)=1$.

Let us begin by recalling some of the basic properties of $\si$,
as found in \cite{R}. Observe first of all that $f$ is symmetric
if and only if $\si = 1$, and skew-symmetric if and only if $\si =
-1$.

\begin{lemma}
\label{lem21a} If $v,w\in V$ and $x\in L(f)$ then $f(\sigma v,\sigma w)=f(v,w)$ and $\sigma x=x\sigma$.
\end{lemma}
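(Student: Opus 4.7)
The plan is to derive both identities from the defining relation $f(w,v)=f(v,\sigma w)$ together with the skew-adjointness $f(xv,w)=-f(v,xw)$ and the non-degeneracy of $f$.

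For the first identity, I would simply iterate the defining relation. Starting from $f(v,w)$, one application of the definition (swapping the roles of the two arguments, which is legal since the definition holds for all $v,w$) gives $f(v,w)=f(w,\sigma v)$. A second application, with $w$ in the left slot and $\sigma v$ in the right, yields $f(w,\sigma v)=f(\sigma v,\sigma w)$. Chaining the two gives $f(\sigma v,\sigma w)=f(v,w)$ with no calculation beyond two invocations of the definition.

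For the second identity, the strategy is to show that $f(v,\sigma x w)=f(v,x\sigma w)$ for every $v,w\in V$ and then invoke non-degeneracy to cancel $v$ on the left. Starting from the left-hand side, the definition of $\sigma$ converts $f(v,\sigma x w)$ into $f(xw,v)$; skew-adjointness of $x$ turns this into $-f(w,xv)$; the definition of $\sigma$ again converts it into $-f(xv,\sigma w)$; and a final use of skew-adjointness produces $f(v,x\sigma w)$. So the chain is
\[
f(v,\sigma x w)=f(xw,v)=-f(w,xv)=-f(xv,\sigma w)=f(v,x\sigma w),
\]
and the result follows because $f$ is non-degenerate.

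There is really no obstacle here, as the lemma is a routine consequence of the two defining identities; the only mild subtlety is keeping track of which slot is which when invoking $f(w,v)=f(v,\sigma w)$, since the relation is asymmetric in its arguments. Both computations are pure symbol-pushing and the order described above minimizes the number of steps.
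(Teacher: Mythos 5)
Your proof is correct and is essentially identical to the paper's: the first identity is the same two applications of $f(w,v)=f(v,\sigma w)$ (just read in the opposite order), and your four-step chain for the second identity matches the paper's computation verbatim, with the concluding appeal to non-degeneracy (which the paper leaves implicit). Nothing to add.
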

\begin{proof} Let $v,w\in V$ and $x\in L(f)$. Then
$$
f(\si v,\si w)=f(w,\si v)=f(v,w)
$$
and
$$
f(v,\si x w)=f(xw, v)=-f(w,xv)=-f(xv,\si w)=f(v, x\si w).
$$
\end{proof}

\begin{lemma}
\label{lem22a} The endomorphism $\sigma-\sigma^{-1}$ belongs to
$L(f)$.
\end{lemma}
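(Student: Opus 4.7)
The plan is to verify directly that $\sigma-\sigma^{-1}$ satisfies the defining condition $f(xv,w)=-f(v,xw)$ of $L(f)$ by expanding and using nothing beyond the definition of $\sigma$ and Lemma \ref{lem21a}. Concretely, I would show
\[
f((\sigma-\sigma^{-1})v,w)+f(v,(\sigma-\sigma^{-1})w)=0
\]
for all $v,w\in V$.

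First I would collect the basic identities I will need. The definition $f(w,v)=f(v,\sigma w)$ gives immediately $f(v,\sigma w)=f(w,v)$, and substituting $w\mapsto\sigma^{-1}w$ in the same identity yields $f(v,w)=f(\sigma^{-1}w,v)$; rewritten, this is $f(\sigma^{-1}v,w)=f(w,v)$. From the $\sigma$-invariance $f(\sigma x,\sigma y)=f(x,y)$ supplied by Lemma \ref{lem21a}, applied with $x=v$ and $y=\sigma^{-1}w$, I obtain the further identity $f(\sigma v,w)=f(v,\sigma^{-1}w)$.

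Now I would expand the desired expression into four terms and substitute:
\[
f(\sigma v,w)-f(\sigma^{-1}v,w)+f(v,\sigma w)-f(v,\sigma^{-1}w).
\]
Using $f(\sigma^{-1}v,w)=f(w,v)$ and $f(v,\sigma w)=f(w,v)$, the middle two terms cancel; using $f(\sigma v,w)=f(v,\sigma^{-1}w)$, the outer two cancel as well. The sum is therefore $0$, showing $\sigma-\sigma^{-1}\in L(f)$.

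There is essentially no obstacle: the statement is a bookkeeping exercise once one recognises that the defining identity for $\sigma$ and the invariance $f(\sigma v,\sigma w)=f(v,w)$ together let one move $\sigma$ freely between the two arguments of $f$ (up to a sign handled by passing to $\sigma^{-1}$). The only place one has to be slightly careful is keeping track of which identity is being used where, so that the terms pair up correctly for cancellation.
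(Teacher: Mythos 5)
Your proposal is correct and follows essentially the same route as the paper: both verify the skew-adjointness identity directly by moving $\sigma$ and $\sigma^{-1}$ across the arguments of $f$ using the defining relation $f(w,v)=f(v,\sigma w)$ and the invariance $f(\sigma v,\sigma w)=f(v,w)$ from Lemma \ref{lem21a}. The only difference is cosmetic: you pair the four terms for cancellation, while the paper rewrites $f((\sigma-\sigma^{-1})v,w)$ term by term into $-f(v,(\sigma-\sigma^{-1})w)$.
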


\begin{proof} Let $v,w\in V$. Then by Lemma \ref{lem21a}
$$
f((\si-\si^{-1}) v, w)=f(\si
v,w)-f(\si^{-1}v,w)=f(v,\si^{-1}w)-f(v,\si
w)=-f(v,(\si-\si^{-1})w).
$$
\end{proof}

Let $B$ be a basis of $V$ and let $A$ and $S$ be the respective
matrices of $f$ and $\si$. Then
$$
A'=AS,\quad S=A^{-1}A'.
$$
Therefore
$$
S^{-1}=(A')^{-1}A=(A')^{-1}S'A',
$$
so $\si$ is similar to $\si^{-1}$. Recalling from \S\ref{seceigop}
the definition and basic properties of the adjoint polynomial, it
follows that
$$
p_\si^*=\pm p_\si.
$$
Thus, if $p\in F[X]$ is a factor of $p_\si$ then so must be $p^*$.
If, in addition, $p$ is irreducible then, since $p(0)\neq 0$,
$p^*$ must also be irreducible. If, in addition, $\gcd(p,p^*)\neq
1$ then $p^*=cp$ for some $c\in F$, so $p=cp^*$, whence $c=\pm 1$.
All in all, if $p\in F[X]$ is an irreducible factor of $p_\si$
then $\gcd(p,p^*)=1$ or $p^*=\pm p$.

In this regard, we have the following result \cite{R}.

\begin{lemma}
\label{lem83} Let $p$ and $q$ be irreducible factors of $p_\si$.
If $q$ is not a scalar multiple of $p^*$, then $V_p$ and $V_q$ are
orthogonal, i.e., $f(V_p,V_q)=f(V_q,V_p)=0$.
\end{lemma}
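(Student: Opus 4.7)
The plan is to convert the defining identity $f(w,v)=f(v,\sigma w)$ into an ``adjoint'' rule that lets us move polynomials in $\sigma$ from one argument of $f$ to the other, and then exploit the hypothesis on $p,q$ to deduce that the relevant operator is invertible on $V_p$.

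First I would establish the following transfer identity: for every $k\in\Z$ and all $v,w\in V$,
$$
f(\sigma^k v,w)=f(v,\sigma^{-k}w).
$$
The case $k=1$ follows from Lemma~\ref{lem21a}: setting $v\mapsto\sigma v$ and $w\mapsto\sigma^{-1}w$ in $f(\sigma v,\sigma w)=f(v,w)$ yields $f(\sigma v,w)=f(v,\sigma^{-1}w)$; induction gives the rest. Extending $F$-linearly, for any $h\in F[X]$ one obtains
$$
f(h(\sigma)v,w)=f(v,h(\sigma^{-1})w).
$$

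Next, fix $v\in V_p$ and $w\in V_q$, and choose $M\ge 1$ with $q(\sigma)^M w=0$. Applying the transfer identity with $h=q^M$ gives $f(q(\sigma^{-1})^M v,w)=0$. Since $p_\sigma(0)\neq 0$ (because $\sigma\in\GL(V)$), every irreducible factor $q$ satisfies $q(0)\neq 0$, so $q(X^{-1})=X^{-\deg q}q^*(X)$, and therefore $q(\sigma^{-1})^M=\sigma^{-M\deg q}\,q^*(\sigma)^M$. Since $\sigma^{-M\deg q}$ is invertible and stabilises $V_p$, this yields
$$
f\bigl(q^*(\sigma)^M V_p,\,w\bigr)=0.
$$

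The key claim is now that $q^*(\sigma)$ restricts to an invertible operator on $V_p$; once this is established, $q^*(\sigma)^M V_p=V_p$ and the desired $f(V_p,V_q)=0$ follows. For this, $V_p$ is the primary component of $V$ at the irreducible polynomial $p$, so an operator of the form $h(\sigma)|_{V_p}$ is invertible iff $\gcd(h,p)=1$. Here $h=q^*$, which is irreducible because $q$ is and $q(0)\neq 0$; so $\gcd(q^*,p)=1$ unless $q^*$ is a scalar multiple of $p$, i.e.\ unless $q=(q^*)^*/\text{scalar}$ is a scalar multiple of $p^*$, which is excluded by hypothesis. This settles $f(V_p,V_q)=0$.

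The statement $f(V_q,V_p)=0$ is symmetric: the hypothesis ``$q$ is not a scalar multiple of $p^*$'' is equivalent, upon taking adjoints, to ``$p$ is not a scalar multiple of $q^*$'', so swapping the roles of $p$ and $q$ in the argument above gives the second equality. The only subtle point is the brief polynomial-algebra step ensuring invertibility of $q^*(\sigma)|_{V_p}$; everything else is a direct consequence of the asymmetry identity.
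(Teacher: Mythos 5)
The paper does not prove this lemma at all: it is quoted from Riehm \cite{R} without argument, so there is no internal proof to compare against. Your proposal is a correct, self-contained proof, and every ingredient you use is already available in the surrounding text: the identity $f(\sigma v,\sigma w)=f(v,w)$ is Lemma \ref{lem21a}, and the facts that $q(0)\neq 0$ for an irreducible factor $q$ of $p_\si$, that $q^*$ is then irreducible with $q^{**}=q$, and that $(p_1p_2)^*=p_1^*p_2^*$ are all recorded in \S\ref{seceigop} and in the paragraph preceding the lemma. The two steps worth making explicit when writing this up are (i) passing from $f(\sigma^{-M\deg q}q^*(\sigma)^M v,w)=0$ for all $v\in V_p$ to $f(q^*(\sigma)^M V_p,w)=0$, which works because one may replace $v$ by $\sigma^{M\deg q}v$ and all these operators commute and preserve $V_p$; and (ii) the standard fact that $h(\sigma)|_{V_p}$ is invertible iff $p\nmid h$, since the minimal polynomial of $\sigma|_{V_p}$ is a power of the irreducible $p$. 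Both are fine as you argue them. As a small simplification, the second equality $f(V_q,V_p)=0$ follows at once from the first without re-running the argument: $f(V_q,V_p)=f(V_p,\sigma V_q)=f(V_p,V_q)=0$ because $\sigma$ stabilises $V_q$; your observation that the hypothesis is symmetric under $p\leftrightarrow q$ (via $q=cp^*\iff p=c^{-1}q^*$, using $p^{**}=p$) is equally valid.
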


It follows that the primary decomposition of $V$ is
$$
V = (V_{p_1}\oplus V_{p_1^*}) \perp \cdots \perp (V_{p_r}\oplus
V_{p_r^*}) \perp V_{q_1}\perp \cdots \perp V_{q_s},
$$
where
$$
f(V_{p_i},V_{p_i})=f(V_{p_i^*},V_{p_i^*})=0,\quad 1\le i\le r,
$$
and $p_1,\ldots,p_r,q_1,\ldots,q_s$ are irreducible factors of
$p_\si$ such that $p_i\ne \pm p_i^*$ and $q_i=\pm q_i^*$.

Given an irreducible polynomial $p\in F[X]$ we set
$f_{p}=f|_{V_{p}+V_{p^*}}$, and further let $f_\lambda=f_{X-\lambda}$ for $\lambda\in F$.
Clearly $f_p$ is non-degenerate and
$V_{p}\cong V_{p^*}$ as vector spaces.

By Lemma \ref{lem21a}, $\si$ commutes with all $x\in L(f)$, so
every primary component of~$V$ is $L(f)$-invariant, which yields
the following decomposition of $L(f)$:
$$
L(f) = L(f_{p_1}) \oplus \cdots \oplus
L(f_{p_r}) \oplus L(f_{q_1}) \oplus
\cdots \oplus L(f_{q_s}).
$$
In particular, $L(f)$ is reductive if and only if so is every
$L(f_{p_i})$ and $L(f_{q_j})$.

\begin{lemma} Suppose $V=V_p\oplus V_{p^*}$, where $p\in F[X]$ and
$\gcd(p,p^*)=1$. Then $L(f)\cong C_{\gl(V_p)}(\sigma|_{V_p})$.
\end{lemma}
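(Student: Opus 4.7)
The plan is to set up a Lie algebra homomorphism $\phi : L(f) \to C_{\gl(V_p)}(\sigma|_{V_p})$ given by restriction to $V_p$, and show that it is an isomorphism by exploiting the fact that under the hypothesis $\gcd(p,p^*)=1$ the primary components $V_p$ and $V_{p^*}$ are totally isotropic for $f$ and are paired non-degenerately with one another.

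First I would note that, because $p$ is irreducible and $\gcd(p,p^*)=1$ forces $p\ne \pm p^*$, Lemma \ref{lem83} gives $f(V_p,V_p)=0=f(V_{p^*},V_{p^*})$, so the restriction $f_0=f|_{V_p\times V_{p^*}}$ is non-degenerate (together with its transpose), since $f$ is. By Lemma \ref{lem21a}, every $x\in L(f)$ commutes with $\sigma$, so $x$ preserves both $V_p$ and $V_{p^*}$; in particular $x|_{V_p}$ lies in $C_{\gl(V_p)}(\sigma|_{V_p})$, so $\phi$ is well-defined. Injectivity is immediate: if $x|_{V_p}=0$ then the skew-adjoint condition with $v\in V_p$ and $w\in V_{p^*}$ reads $f(v,xw)=0$ for every $v\in V_p$, and non-degeneracy of $f_0$ forces $x|_{V_{p^*}}=0$.

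The substantive content is surjectivity. Given $y\in C_{\gl(V_p)}(\sigma|_{V_p})$, non-degeneracy of $f_0$ allows me to define a unique $z\in\gl(V_{p^*})$ by
\[
f(yv,w) = -f(v,zw),\qquad v\in V_p,\; w\in V_{p^*},
\]
and I then set $x=y\oplus z$ on $V=V_p\oplus V_{p^*}$. For $x$ to belong to $L(f)$, three further checks must be made: the skew-adjointness when both $v,w\in V_p$ or both $v,w\in V_{p^*}$ is automatic because $f$ vanishes on these pieces; the remaining case $v\in V_{p^*}$, $w\in V_p$ has to be verified; and moreover the recipe for $x|_{V_{p^*}}$ must be shown to preserve $V_{p^*}$ (this is built into the definition of $z$ via $f_0$).

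The main obstacle is the compatibility between the defining relation for $z$ and the asymmetry, which I expect to handle as follows. Applying the defining relation with $v$ replaced by $\sigma v\in V_p$ and $w$ replaced by $\sigma w\in V_{p^*}$, then using the isometry identity $f(\sigma v,\sigma w)=f(v,w)$ of Lemma \ref{lem21a} and the hypothesis $y\sigma = \sigma y$ on $V_p$, I obtain
\[
f(v,zw) = f(v,\sigma^{-1}z\sigma w)\qquad\text{for all } v\in V_p,\; w\in V_{p^*}.
\]
Non-degeneracy of $f_0$ then yields $\sigma z=z\sigma$ on $V_{p^*}$, so $z\in C_{\gl(V_{p^*})}(\sigma|_{V_{p^*}})$. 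Finally, the remaining case $v\in V_{p^*}$, $w\in V_p$ of skew-adjointness reduces, via the defining identity $f(v,w)=f(w,\sigma v)$ of the asymmetry, to the already established defining relation for $z$ (applied to $w$ and $\sigma v$), completing the verification that $x\in L(f)$. Hence $\phi$ is a bijective Lie algebra homomorphism, proving the claim.
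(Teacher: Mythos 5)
Your proof is correct, but it takes a genuinely different route from the paper's. The paper works in coordinates: it picks a basis $B$ of $V_p$ and, using the non-degenerate pairing, the ``dual'' basis $D$ of $V_{p^*}$ with $f(u_i,v_j)=\delta_{ij}$, computes the Gram matrix of $f$ relative to $B\cup D$ to be $\left(\begin{smallmatrix}0 & A'\\ I_n & 0\end{smallmatrix}\right)$ with $A$ the matrix of $\sigma|_{V_p}$, and then invokes Corollary \ref{central}; there the hypothesis $\gcd(p_A,p_A^*)=1$ is fed into the eigenvalue analysis of $l_Ar_{A'}$ (Lemma \ref{eigenvalue2}) to force the off-diagonal blocks of a skew-adjoint matrix to vanish. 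You instead argue coordinate-free: the vanishing of the ``off-diagonal'' part comes for free because $x\in L(f)$ commutes with $\sigma$ (Lemma \ref{lem21a}) and hence preserves the primary components, while the hypothesis $\gcd(p,p^*)=1$ enters only through Lemma \ref{lem83}, which makes $V_p$ and $V_{p^*}$ totally isotropic and hence non-degenerately paired; restriction to $V_p$ is then injective, and you build the inverse map explicitly by defining $z$ on $V_{p^*}$ as the negative adjoint of $y$ with respect to the pairing, checking $z\sigma=\sigma z$ and the remaining skew-adjointness case via the isometry property $f(\sigma v,\sigma w)=f(v,w)$ and the defining identity of the asymmetry. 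Your checks are all sound (in particular the derivation of $\sigma z=z\sigma$ and the reduction of the case $v\in V_{p^*},\,w\in V_p$ to the defining relation for $z$). What each approach buys: yours is self-contained modulo Lemmas \ref{lem21a} and \ref{lem83} and bypasses the eigenvalue machinery of \S\ref{seceigop} entirely; the paper's reuses Corollary \ref{central} (needed elsewhere, e.g.\ for Corollary \ref{3cr}) and yields an explicit block-matrix description of $L(f)$ in the chosen basis, which is convenient for the later structural computations.
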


\begin{proof} Let $B = \{ v_1,\ldots,v_n \}$ be a basis of $V_p$ and let $C=\{
\eta_1,\ldots,\eta_n \}$ be the dual basis of $V_p^*$. Thus
$\eta_i(v_j)=\delta_{ij}$. Clearly, the map $V_{p^*}\to
V_{p}^*$, given by $u\mapsto f(u,-)$, is bijective. The inverse
image of $C$ under this map is a basis $D=\{u_1,\ldots,u_n\}$
of~$V_{p^*}$ satisfying $f(u_i,v_j)=\delta_{ij}$.

Let $A$ be the matrix of $\si\vert_{V_p}$ relative to ~$B$. We
claim that the Gram matrix of $f$ relative to the basis $B\cup D$
of $V$ is
$$
T=\left( \begin{matrix}
0 & A'\\
I_n & 0\\
\end{matrix} \right).
$$
Indeed, the condition $f(u_i,v_j)=\delta_{ij}$ justifies the block
$I$, while the fact that $f(V_p,V_p)=0=f(V_{p^*},V_{p^*})$
justifies the 0 blocks. As for the remaining block, note that for
every $1\leq i,j\leq n$ we have
$$
f(v_i,u_j) = f(u_j,\sigma v_i) = f \left( u_j , \sum_{k=1}^n
A_{ki}v_k \right) = \sum_{k=1}^n A_{ki} h(u_j,v_k) = \sum_{k=1}^n
A_{ki}\delta_{jk} = A_{ji}.
$$
Thus $L(f)\cong C_{\gl(n)}(A')\cong C_{\gl(n)}(A)\cong
C_{\gl(V_p)}(\sigma|_{V_p})$ by Corollary \ref{central}.
\end{proof}

We have all the information required to derive the following result.

\begin{prop}\label{zxc} Suppose $\ell\neq 2$ and $f$ is non-degenerate. Let $K$ be an algebraic closure of $F$.
Then  $L(f)$  is absolutely reductive if and only if so are
$L(f_1)$ and $L(f_{-1})$, and for every irreducible factor $p\in
F[X]$ of $p_\si$ such that $p(\pm 1)\neq 0$ either
$\si\vert_{V_p}$ is cyclic, in which case $L(f_p)$ is abelian, or
else $\si\vert_{V_p}$ is semisimple and $p$ is separable, in which
case $L(f_p)\otimes K$ is the direct sum of $\mathrm{deg}(p)$
copies of $\gl(r,K)$, where $r=\dm(V_p)/\mathrm{deg}(p)>1$.
\end{prop}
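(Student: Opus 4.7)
The approach rests on the orthogonal decomposition
$$
L(f) = \bigoplus_{i} L(f_{p_i}) \;\oplus\; \bigoplus_{j} L(f_{q_j})
$$
established just before the preceding lemma, which is preserved under extension of scalars to $K$. Hence $L(f)$ is absolutely reductive iff every summand is, and since the $q_j$ equal to $X\pm 1$ produce exactly $L(f_1)$ and $L(f_{-1})$, the question reduces to analyzing $L(f_p)$ for each irreducible factor $p$ of $p_\sigma$ with $p(\pm 1)\neq 0$.

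When $\gcd(p,p^*)=1$ (the $p_i$ case), the lemma immediately preceding the proposition identifies $L(f_p)\cong C_{\gl(V_p)}(\sigma|_{V_p})$, so Proposition \ref{redcent} together with the Note following it applies directly. Absolute reductivity of this centralizer forces $\sigma|_{V_p}$ to be either cyclic, yielding an abelian $L(f_p)$ of dimension $\dm V_p$, or semisimple with $p$ separable, yielding $L(f_p)\cong \gl(r,E)$ for $E=F[X]/(p)$ and $r=\dm V_p/\deg p>1$; separability gives $E\otimes_F K\cong K^{\deg p}$ and hence $L(f_p)\otimes K\cong \gl(r,K)^{\deg p}$.

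When $p=\pm p^*$ with $\deg p>1$ (the remaining $q_j$), the preceding lemma does not apply over $F$, since $V_p=V_{p^*}$. My plan is to pass to $K$: there $p$ splits and, because $p(\pm 1)\neq 0$, its roots pair as $\lambda\leftrightarrow 1/\lambda$ with $\lambda\neq 1/\lambda$. Lemma \ref{lem83} applied over $K$ decomposes $(f_p)_K$ orthogonally along these pairs, and the preceding lemma applied over $K$ on each pair $\{\lambda,1/\lambda\}$ identifies the corresponding block of $L(f_p)\otimes K$ with $C_{\gl((V_K)_\lambda)}(\sigma_K|_{(V_K)_\lambda})$. By Lemma \ref{lem21a}, $L(f_p)\subseteq C_{\gl(V_p)}(\sigma|_{V_p})$, so cyclicity of $\sigma|_{V_p}$ makes the ambient algebra abelian and forces $L(f_p)$ itself to be abelian; semisimplicity together with separability of $p$ instead renders each $\sigma_K|_{(V_K)_\lambda}$ scalar, so each block becomes a copy of $\gl(r,K)$, producing the required direct-sum decomposition. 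Any failure of these two alternatives (some elementary divisor of exponent greater than $1$ with $r>1$) will give, exactly as in Proposition \ref{redcent}, a non-central abelian ideal inside $L(f_p)$, blocking reductivity.

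The main obstacle will be this last case: unlike for the $p_i$'s, $L(f_{q_j})$ is a proper subalgebra of $C_{\gl(V_{q_j})}(\sigma|_{V_{q_j}})\cong \gl(r,E)$, cut out by the extra bilinear-form constraint, so its $\gl(r,K)$-structure is not visible over $F$ and one must extend scalars while tracking the pairing $\lambda\leftrightarrow 1/\lambda$ of the roots of $p$ to read off the decomposition and verify that separability of $p$ is precisely the condition needed for absolute reductivity.
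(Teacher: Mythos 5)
The paper gives no written proof of this proposition --- it is presented as an immediate consequence of the primary decomposition $L(f)=\bigoplus_i L(f_{p_i})\oplus\bigoplus_j L(f_{q_j})$, the unlabelled lemma identifying $L(f_p)$ with $C_{\gl(V_p)}(\sigma|_{V_p})$ when $\gcd(p,p^*)=1$, Proposition \ref{redcent} and the Note following it --- and your proposal assembles exactly these ingredients, so your route is the intended one. You are also right that the only point requiring genuine extra work is the case of a self-adjoint irreducible factor $p=\pm p^*$ of degree $>1$ (which the paper's one-line derivation silently skips), and your plan for it is sound: extend scalars to $K$, use $p(\pm 1)\neq 0$ to pair the roots as $\lambda\leftrightarrow\lambda^{-1}$ with $\lambda\neq\lambda^{-1}$, invoke Lemma \ref{lem83} over $K$ to split $(f_p)_K$ along these pairs, and apply the centralizer lemma and Proposition \ref{redcent} blockwise; the containment $L(f_p)\subseteq C_{\gl(V_p)}(\sigma|_{V_p})$ from Lemma \ref{lem21a} correctly disposes of the cyclic alternative over $F$.

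There is, however, one discrepancy you must not leave unresolved: in the self-adjoint case your own construction yields $\deg(p)/2$ pairs $\{\lambda,\lambda^{-1}\}$ and hence $\deg(p)/2$ copies of $\gl(r,K)$, whereas the statement asserts $\deg(p)$ copies; that count is correct only for the factors with $\gcd(p,p^*)=1$, where $L(f_p)\cong\gl(r,F[X]/(p))$ has $F$-dimension $\deg(p)\,r^2$. A concrete check: over $F=\mathbb{R}$ take $f$ with Gram matrix $A\oplus A$, $A=\left(\begin{smallmatrix}1&1\\-1&1\end{smallmatrix}\right)$, so $p_\sigma=X^2+1=p=p^*$ and $r=2$; then $L(f)\cong\um(2)$ has real dimension $4$ and $L(f)\otimes\mathbb{C}\cong\gl(2,\mathbb{C})$ is a \emph{single} copy, not two. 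So either you must reconcile your block count with the statement, or (as I believe is the case) the stated multiplicity should read $\deg(p)/2$ when $p=\pm p^*$ --- a slip that never surfaces in the paper's main application, since over an algebraically closed field the only self-adjoint irreducible factors are $X\mp 1$. As written, your argument proves a corrected variant of the proposition rather than the proposition verbatim, and you should say so explicitly.
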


%Applying Proposition \ref{redcent} we
%obtain

%\begin{prop} Suppose $V=V_p\oplus
%V_{p^*}$, where $p\in F[X]$ is irreducible and relatively prime to
%$p^*$. Then the Lie algebra $L(f)$ is reductive if and only if
%$\si|_{V_p}$ is cyclic, or $\si|_{V_p}$ is semisimple and
%$(r,\ell)\ne (2,2)$, where $r$ is the number of elementary
%divisors of $\si|_{V_p}$.
%\end{prop}

\section{Centralizers in orthogonal and symplectic Lie algebras}
\label{seccensym}

Here we recall from \cite{D} and \cite{S} how to view $L(f)$ as
the centralizer of a suitable element of an orthogonal or
symplectic Lie algebra.

Let $\B(V)$ denote the space of all bilinear forms on $V$.
Note that $\B(V)$ is a natural right $\End(V)$-module via
$$
(g\cdot a)(u,v)=g(u,av),\quad g\in\B(V),a\in \End(V),u,v\in V.
$$
For a fixed $g\in\B(V)$ the map $\End(V)\to\B(V)$ given by $a\to
g\cdot a$ is a linear isomorphism if and only if $g$ is
non-degenerate, in which case $a$ is invertible if and only if
$g\cdot a$ is non-degenerate. In this case, given any $h\in\B(V)$
we will write $a_{g,h}$ for the unique $a\in \End(V)$ such that
$h(u,v)=g(u,av)$ for all $u,v\in V$.

Let $g\in\B(V)$. Define $g',g^+$ and $g^-$ in $\B(V)$ as follows:
$g'(u,v)=g(v,u)$ for all $u,v\in V$, $g^+=g+g'$ and $g^-=g-g'$.

In this notation, if $f$ is non-degenerate then $\si=u_{f,f'}$ is
the asymmetry of $f$. If $f^+$ is non-degenerate, we write $\si^{+-}=a_{f^+,f^-}$.
If $f^-$ is non-degenerate, we write $\si^{-+}=a_{f^-,f^+}$.
We let $p_\sigma\in F[X]$ stand for the minimal polynomial of $\si$.

\begin{lemma}
\label{ze1} Suppose $f$ is non-degenerate. Then

(a) $f^-$ is non-degenerate if and only if $p_\si(1)\neq 0$.

(b) $f^+$ is non-degenerate if and only if $p_\si(-1)\neq 0$.
\end{lemma}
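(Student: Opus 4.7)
The plan is to express $f^\pm$ in the form $f\cdot a$ for a suitable $a\in\End(V)$, and then translate non-degeneracy of $f^\pm$ into invertibility of $a$ via the isomorphism described just before the lemma.

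First I would unpack the definition of the asymmetry: for all $v,w\in V$,
$$
f'(v,w)=f(w,v)=f(v,\sigma w),
$$
which says exactly that $f'=f\cdot\sigma$ in the right $\End(V)$-module structure on $\B(V)$. Consequently,
$$
f^- = f-f' = f\cdot(1-\sigma),\qquad f^+ = f+f' = f\cdot(1+\sigma).
$$

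Next, since $f$ is non-degenerate, the map $a\mapsto f\cdot a$ is a linear isomorphism $\End(V)\to\B(V)$, and a bilinear form $f\cdot a$ is non-degenerate if and only if $a\in\GL(V)$. Applying this to $a=1-\sigma$ and $a=1+\sigma$, we see that $f^-$ is non-degenerate iff $1-\sigma$ is invertible, and $f^+$ is non-degenerate iff $1+\sigma$ is invertible. Finally, $1-\sigma$ is invertible iff $1$ is not an eigenvalue of $\sigma$, which holds iff $p_\sigma(1)\neq 0$; similarly $1+\sigma$ is invertible iff $p_\sigma(-1)\neq 0$. This yields both (a) and (b).

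There is no real obstacle here — the only step that requires any care is the identification $f'=f\cdot\sigma$, but that is immediate from the defining relation $f(w,v)=f(v,\sigma w)$. Everything else is a direct application of the module-theoretic reformulation of non-degeneracy recalled at the start of this section.
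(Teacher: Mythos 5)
Your proof is correct and is essentially the paper's own argument: the identity $f^{-}=f\cdot(1_V-\sigma)$ is exactly the computation $f^-(v,w)=f(v,(1_V-\sigma)w)$ that the paper performs, and both proofs then invoke non-degeneracy of $f$ to reduce to invertibility of $1_V\mp\sigma$, i.e.\ to $p_\sigma(\pm 1)\neq 0$. The module-theoretic phrasing is just a repackaging of the same step, so there is nothing further to add.
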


\begin{proof} (a) For $v,w\in V$ we have
$$
f^-(v,w)=f(v,w)-f(w,v)=f(v,w)-f(v,\si w)=f(v,(1_V-\si))w).
$$
Since $f$ is non-degenerate, it follows that $f^-$ is
non-degenerate if and only if $1_V-\si$ is invertible, which is
equivalent to $p_\si(1)\neq 0$.

(b) This follows as above mutatis mutandis.
\end{proof}

\begin{lemma}\label{ze2} (a) If $f^-$ is non-degenerate, then
$\si^{-+}\in L(f^-)$.

(b) If $f^+$ is non-degenerate, then $\si^{+-}\in L(f^+)$.
\end{lemma}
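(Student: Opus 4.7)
The plan is to exploit the elementary observation that $f^+$ is symmetric and $f^-$ is skew-symmetric, which follow instantly from the definitions $f^+(u,v)=f(u,v)+f(v,u)$ and $f^-(u,v)=f(u,v)-f(v,u)$.

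For part (a), I would unwind the definition of $\si^{-+}$: it is the unique endomorphism satisfying $f^+(u,v)=f^-(u,\si^{-+}v)$ for all $u,v\in V$. To show $\si^{-+}\in L(f^-)$ I must verify $f^-(\si^{-+}u,v)=-f^-(u,\si^{-+}v)$. Starting from the left-hand side and using skew-symmetry of $f^-$, then the defining relation of $\si^{-+}$, and finally symmetry of $f^+$, I get
\[
f^-(\si^{-+}u,v)=-f^-(v,\si^{-+}u)=-f^+(v,u)=-f^+(u,v)=-f^-(u,\si^{-+}v),
\]
as desired.

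Part (b) is entirely analogous with the roles of $+$ and $-$ swapped: the defining relation is $f^-(u,v)=f^+(u,\si^{+-}v)$, and using symmetry of $f^+$, then the defining relation, then skew-symmetry of $f^-$, one computes $f^+(\si^{+-}u,v)=f^+(v,\si^{+-}u)=f^-(v,u)=-f^-(u,v)=-f^+(u,\si^{+-}v)$.

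There is no real obstacle here; the proof is a short chain of rewrites, and the only point requiring care is to keep track of which form is symmetric and which is skew-symmetric at each step.
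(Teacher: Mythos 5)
Your proof is correct and follows essentially the same route as the paper: both arguments use only the symmetry of $f^+$, the skew-symmetry of $f^-$, and the defining relations of $\si^{-+}$ and $\si^{+-}$, differing only in that the paper shows the sum $f^-(\si^{-+}v,w)+f^-(v,\si^{-+}w)$ vanishes while you write the same cancellation as a chain of equalities.
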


\begin{proof} (a) Let $v,w\in V$. Then
$$
f^-(\si^{-+}v,w)+f^-(v,\si^{-+}w)=-f^-(w,\si^{-+}v)+f^+(v,w)=-f^+(w,v)+f^+(v,w)=0.
$$

(b) This follows as above mutatis mutandis.

\end{proof}

\begin{lemma}\label{ze3} (a) Suppose $f^-$ is non-degenerate and $\ell\neq
2$. Then
$$
L(f)=C_{L(f^-)}(\si^{-+}).
$$

(b) Suppose $f^+$ is non-degenerate and $\ell\neq 2$. Then
$$
L(f)=C_{L(f^+)}(\si^{+-}).
$$
\end{lemma}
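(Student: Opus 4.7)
The plan is to prove both inclusions of part (a); part (b) then follows by the same argument mutatis mutandis, swapping the roles of $f^+$ and $f^-$.

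The key preliminary identities I would record first are $2f=f^++f^-$ (here I use $\ell\neq 2$) and the definition $f^+(u,v)=f^-(u,\si^{-+}v)$. A small auxiliary observation: if $x\in L(f)$ then $x\in L(f')$ as well, since
\[
f'(xv,w)=f(w,xv)=-f(xw,v)=-f'(v,xw).
\]
Consequently $L(f)\subseteq L(f^+)\cap L(f^-)$.

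For the forward inclusion $L(f)\subseteq C_{L(f^-)}(\si^{-+})$, take $x\in L(f)$. It remains to show $x$ commutes with $\si^{-+}$. Using that $x$ lies in both $L(f^+)$ and $L(f^-)$, compute in two ways:
\[
f^-(xu,\si^{-+}v)=f^+(xu,v)=-f^+(u,xv)=-f^-(u,\si^{-+}xv),
\]
and on the other hand
\[
f^-(xu,\si^{-+}v)=-f^-(u,x\si^{-+}v).
\]
Equating the two and using the non-degeneracy of $f^-$ yields $x\si^{-+}=\si^{-+}x$, so $x\in C_{L(f^-)}(\si^{-+})$.

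For the reverse inclusion, let $x\in L(f^-)$ with $x\si^{-+}=\si^{-+}x$. The main point is to deduce that $x\in L(f^+)$ as well; once this is done, the identity $2f=f^++f^-$ combined with $\ell\neq 2$ immediately gives $x\in L(f)$. To show $x\in L(f^+)$, I compute
\[
f^+(xv,w)=f^-(xv,\si^{-+}w)=-f^-(v,x\si^{-+}w)=-f^-(v,\si^{-+}xw)=-f^+(v,xw),
\]
where the middle equality uses $x\in L(f^-)$ and the next uses the commutation with $\si^{-+}$. This finishes (a), and I would conclude by remarking that (b) is obtained by interchanging the roles of $+$ and $-$ throughout.

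The computations here are essentially routine; the only genuine subtlety is the bookkeeping of signs and ensuring at each step that the hypotheses $\ell\neq 2$ and non-degeneracy of $f^-$ are invoked at the right moments (the former to pass from $L(f^+)\cap L(f^-)$ back to $L(f)$, the latter to cancel $f^-(u,-)$ and extract the commutation relation).
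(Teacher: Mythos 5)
Your proof is correct and follows essentially the same route as the paper: both reduce to the identity $L(f)=L(f^+)\cap L(f^-)$ (valid since $\ell\neq 2$) and then use the defining relation $f^+(u,v)=f^-(u,\si^{-+}v)$ together with non-degeneracy of $f^-$ to convert membership in $L(f^+)$ into commutation with $\si^{-+}$. The paper phrases this as a single chain of equivalences rather than two separate inclusions, but the computation is the same.
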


\begin{proof} (a) Let $x\in\gl(V)$. Since $\ell\neq 2$ we see that
$x\in L(f)$ if and only if $x\in L(f^-)$ and $x\in L(f^+)$, that
is, $x\in L(f^-)$ and $f^+(xv,w)+f^+(v,xw)=0$ for all $v,w\in V$.
Since $f^-$ is non-degenerate, in the presence of $x\in L(f^-)$,
the last condition can be replaced by
$$
\begin{aligned}
0&=f^-(xv,\si^{-+}w)+f^-(v,\si^{-+}xw)=-f^-(v,x\sigma^{-+}w)+f^-(v,\si^{-+}xw)\\
&=f^-(v,(\sigma^{-+}x-x\sigma^{-+})w),\quad v,w\in V
\end{aligned}
$$
which means that $\sigma^{-+}x=x\sigma^{-+}$, as required.

(b) This follows as above mutatis mutandis.
\end{proof}

\section{Dealing with $V_1$: first round}
\label{secuno}

Here we apply the ideas from \S\ref{seccensym} to study the
reductivity of $L(f)$ when $f$ is non-degenerate and $\si-1_V$ is
nilpotent, where $\si$ is the asymmetry of $f$.

We suppose until further notice that $f$ has Gram matrix
$$
\left(
    \begin{array}{cc}
      0 & I_{n_1} \\
      J_{n_1}(1) & 0 \\
    \end{array}
  \right)\oplus\cdots\oplus\left(
    \begin{array}{cc}
      0 & I_{n_k} \\
      J_{n_k}(1) & 0 \\
    \end{array}
  \right),\quad n_1\leq\cdots\leq n_k,
$$
relative to some basis of $V$. Reordering this basis we obtain a
new basis, say $Q$, relative to which $f$ has Gram matrix
$$
A=\left(
    \begin{array}{cc}
      0 & I \\
      J & 0 \\
    \end{array}
  \right),
$$
where all blocks have the same size $n=n_1+\cdots+n_k$ and
$$
J=J_{n_1}(1)\oplus\cdots\oplus J_{n_k}(1).
$$
Let $S$ be the matrix of  $\sigma$ relative to $Q$. Then
$$
S=A^{-1}A'=\left(
    \begin{array}{cc}
      J^{-1} & 0 \\
      0 & J' \\
    \end{array}
  \right).
$$
Thus $\sigma$ has only one eigenvalue, namely 1, and elementary
divisors
$$(X-1)^{n_1},(X-1)^{n_1},\dots,(X-1)^{n_k},(X-1)^{n_k}.
$$

We assume that $\ell\neq 2$ for the remainder of this section.
Since $p_\si(-1)\neq 0$, Lemma \ref{ze1} ensures that $f^+$ is
non-degenerate. Let $S^{+-}$ be the matrix of $\si^{+-}$ relative
to $Q$. By definition
$$
f^-(v,w)=f^+(v,\si^{+-}w),\quad v,w\in V,
$$
so
$$A-A'=(A+A')S^{+-}.
$$
Since $A+A'$ is invertible, we have
$$ S^{+-}=(A+A')^{-1}(A-A')= \left(
    \begin{array}{cc}
      (I+J)^{-1}(J-I) & 0 \\
      0 & (I+J')^{-1}(I-J') \\
    \end{array}
  \right).
$$
Performing these calculations block by block we see that
$\si^{+-}$ is nilpotent, with elementary divisors
$$X^{n_1},X^{n_1},\dots,X^{n_k}, X^{n_k}.
$$

It follows from  \S\ref{seccensym} that $S^{+-}\in L(A+A')$ and $L(f)\cong C_{L(A+A')}(S^{+-})$.
In order to use this identification more effectively, we will replace $A+A'$ and $S^{+-}$ by more convenient
matrices. Let
$$
Z=
\left(
    \begin{array}{cc}
      (I+J)^{-1} & 0 \\
      0 & I \\
    \end{array}
  \right)
$$
and set
$$
C=Z'(A+A')Z=\left(
    \begin{array}{cc}
      0 & I \\
      I & 0 \\
    \end{array}
  \right),
$$
$$
T=Z^{-1}S^{+-}Z=
\left(
    \begin{array}{cc}
      R_1 & 0 \\
      0 & R_2 \\
    \end{array}
  \right),
$$
where $R_1,R_2\in\gl(n)$, $n=n_1+\cdots+n_k$, are similar to
$$L=J_{n_1}(0)\oplus\cdots\oplus J_{n_k}(0).$$

Now the map $L(A+A')\to L(C)$ given by $U\mapsto Z^{-1}UZ$ is a
Lie isomorphism, so $L(f)\cong C_{L(C)}(T)$, where $T\in
L(C)=\so(2n)$, the usual matrix version of the orthogonal Lie
algebra. In particular, $R_2=-R_1'$.

Now for any $G\in\GL_n(F)$ the matrix
$$
H=\left(
    \begin{array}{cc}
      G & 0 \\
      0 & (G')^{-1} \\
    \end{array}
  \right)
$$
belongs to the orthogonal group $O(2n)$, i.e., the isometry group
of $C$. Since $R_1$ is similar to $L$ we may choose $G\in\GL_n(F)$
so that $G^{-1}R_1 G=L$. Thus $V\mapsto H^{-1}VH$ is an
automorphism of $\so(2n)$ mapping $T\in\so(2n)$ to
$$
Y=\left(
    \begin{array}{cc}
      L & 0 \\
      0 & -L' \\
    \end{array}
  \right)\in\so(2n).
$$
All in all, there is a basis $B$ of $V$ relative to which $L(f)$
is represented by $C_{\so(2n)}(Y)$. To compute with $L(f)$ we look
for all matrices
$$N=\left(
    \begin{array}{cc}
      P & D \\
      E & -P' \\
    \end{array}
  \right)\in\so(2n)
$$
that commute with $Y$. Since $D,E$ are skew-symmetric, this is
equivalent to
$$
 LP=PL,\quad
LD=(LD)',\quad EL=(EL)'.
$$

%We next use this description to look at the structure of $L(f)$
%when $k=1$ and $n$ is even.
%In this case $f$ is indecomposable, as shown in Theorem 2.3.1 of \cite{W}.

\begin{prop}
\label{x1}
 Suppose that $\ell\neq 2$, that $n$ is even, and that $f$ has Gram
 matrix $A_n(1)$ relative to some basis of $V$. Then $L(f)$ has dimension $2n$ and
is isomorphic to the current truncated Lie algebra $\gl(2)\otimes
F[X]/(X^{n/2})$. In particular, $L(f)$ is reductive if and only if
$n=2$. Moreover, let $W=\ker(\si^{+-})^2$ and set
$$
K=\{x\in L(f)\,|\, xV\subseteq W\text{ and }x W=0\}.
$$
Then $K$ is an abelian ideal of $L(f)$ not contained in its
center, provided $n>2$.
\end{prop}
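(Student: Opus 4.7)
Proof plan. From the calculations earlier in this section, $L(f)\cong C_{\so(2n)}(Y)$ with $Y=L\oplus(-L')$ and $L=J_n(0)$ a single Jordan block (reflecting the hypothesis that $f$ has Gram matrix $A_n(1)$). I will parametrize elements of $C_{\so(2n)}(Y)$ as block matrices with diagonal blocks $P,-P'$ and off-diagonal blocks $D,E$, where $P\in F[L]$ and $D,E$ are skew-symmetric with $LD+DL'=0$ and $EL+L'E=0$. A dimension count yields $2n$: $\dim F[L]=n$, while each skew-symmetric twisted commutant has dimension $n/2$. For the latter, the full (non-skew) twisted commutant $\{M:LM+ML'=0\}$ has dimension $n$ (conjugating by the anti-diagonal flip $F$, so that $FLF=L'$, turns the condition into anticommutation with $L$, and the anticentralizer of $J_n(0)$ has the same dimension $n$ as its centralizer), and skew-symmetry halves it, using $\ell\neq 2$.

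For the isomorphism with $\gl(2)\otimes F[X]/(X^m)$, $m=n/2$, set $R=L^2$, so that $F[R]\cong F[X]/(X^m)$ and $F[L]=F[R]\oplus L\cdot F[R]$. The identity $L^kD=(-1)^kD(L^k)'$, proved by induction from $LD=-DL'$, has two consequences: the $D$- and $E$-block spaces become free rank-one $F[R]$-modules under left multiplication by $R$, and in the Lie bracket of $L(f)$ the odd powers $L^{2l+1}$ (inside the block-diagonal subalgebra) act as zero on the off-diagonal blocks, while the even powers $L^{2l}$ act as multiplication by $2R^l$. Picking module generators $D^*$, $E^*$ and computing the matrix product $D^*E^*$, which turns out to be a nonzero scalar multiple of the identity, one obtains a basis of $L(f)$ given, for $0\le l<m$, by $h_l=P_{2l}$, $z_l=P_{2l+1}$, $\alpha_l$ (the element with upper-right block $R^lD^*$), and $\beta_l$ (the element with lower-left block $E^*R^l$), with Lie brackets $[h_l,\alpha_k]=2\alpha_{k+l}$, $[h_l,\beta_k]=-2\beta_{k+l}$, $[\alpha_k,\beta_l]=c\,h_{k+l}$ for a nonzero scalar $c$, the $z_l$ central in $L(f)$, and the remaining brackets zero. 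After rescaling $E^*$ to normalize $c$, the correspondence $h_l\leftrightarrow (E_{11}-E_{22})\otimes X^l$, $z_l\leftrightarrow I\otimes X^l$, $\alpha_l\leftrightarrow E_{12}\otimes X^l$, $\beta_l\leftrightarrow E_{21}\otimes X^l$ establishes the isomorphism. The main technical obstacle will be the choice of the generators $D^*,E^*$ and the verification that $D^*E^*$ is a scalar multiple of the identity (which ensures that the $[\alpha,\beta]$ brackets land in the even-powers part of $F[L]$, not the odd-powers part).

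The reductivity claim is then immediate: for $m=1$, $L(f)\cong\gl(2)$, reductive by $\ell\neq 2$; for $m>1$, $\gl(2)\otimes X^{m-1}F[X]/(X^m)$ is a nonzero abelian ideal containing the non-central element $(E_{11}-E_{22})\otimes X^{m-1}$, so $L(f)$ is not reductive.

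For the ideal $K$, Lemma~\ref{ze3}(b) implies that $\si^{+-}$ commutes with every element of $L(f)$, so $W=\ker(\si^{+-})^2$ is $L(f)$-invariant; from the elementary divisors of $\si^{+-}$ computed earlier ($X^n$ twice), $\dim W=4$. A general argument then shows $K$ is an abelian ideal: if $x,y\in K$ then $xyV\subseteq xW=0$, so $[x,y]=0$; and if $y\in L(f)$, $x\in K$, then $yxV\subseteq yW\subseteq W$, $xyV\subseteq xV\subseteq W$, $yxW=0$, and $xyW\subseteq xW=0$, using the $L(f)$-invariance of $W$. For $n>2$, the element $P_{n-2}=h_{m-1}$ lies in $K$ --- its image equals $Fe_{n-1}+Fe_n+Ff_1+Ff_2\subseteq W$ and it annihilates $W$ --- but $[h_{m-1},\alpha_0]=2\alpha_{m-1}\neq 0$ shows $h_{m-1}$ is not central, whence $K\not\subseteq Z(L(f))$.
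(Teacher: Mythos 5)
Your proposal is correct and follows essentially the same route as the paper: both reduce to $C_{\so(2n)}(Y)$ with $Y=\mathrm{diag}(J_n(0),-J_n(0)')$, solve the block conditions $LP=PL$, $LD=-DL'$, $EL=-L'E$, and read off the $\gl(2)$-current relations from elements attached to the diagonals, exhibiting the same non-central element $h_{n/2-1}\in K$ at the end. The one step you flag but do not carry out --- that $D^*E^*$ is a nonzero scalar multiple of $I_n$ --- does hold and is quick: $D^*$ and $E^*$ may be taken supported on the secondary diagonal alone, so $D^*E^*$ is diagonal, and since it also lies in $C_{\gl(n)}(L)=F[L]$ it must be scalar.
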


\begin{proof} We keep the above notation and note that $L=J_n(0)$. Observe that if $n=4$ the matrices $N$ have the form
$$
\left(\begin{array}{cccc|cccc}
a &0 &0 &0 &0 &0 &0 &j\\
b &a &0 &0 &0 &0 &-j&0\\
c &b &a &0 &0 &j &0 &k\\
d &c &b &a &-j&0 &-k&0\\\hline
0 &e &0 &g &-a &-b&-c &-d\\
-e&0 &-g&0 &0 &-a &-b&-c\\
0 &g &0 &0 &0 &0 &-a&-b\\
-g&0 &0 &0 &0 & 0& 0&-a\\
\end{array}\right)
$$

In general, $P$ is lower triangular with equal entries along any
sub-diagonal. By an even (resp. odd) diagonal of $P$ we will
understand a sub-diagonal lying at an even (resp. odd) distance
from the main diagonal.

A typical $D$ is lower triangular with respect to the secondary
diagonal (the one joining positions $(1,n)$ and $(n,1)$), with
alternating entries $j,-j,\dots,j,-j$ along the secondary diagonal
as well as along all sub-diagonals of the the secondary diagonal
lying at even distance from it. We will be refer to these as even
diagonals of $D$. Sub-diagonals of the secondary diagonal of $D$
at an odd distance from it must be~0.

A typical $E$ is is upper triangular with respect to the secondary diagonal,
with alternating entries
along the secondary diagonal as well as
all its super-diagonals at even distance from it. We will be refer to these as
even diagonals of $E$. Super-diagonals
of the secondary diagonal of $E$ at an odd distance from it must be 0.

Let $h_0$ be the element associated to the main diagonal of $P$,
that is
$$
h_0=e_{11}+\cdots+e_{nn}-(e_{n+1,n+1}+\cdots+e_{2n,2n}).
$$
Let $e_0,f_0$ be the elements likewise associated to the secondary diagonals of $D$
and $E$, respectively.

Next let $h_1$ be the element associated to the next even diagonal of
$P$, and let $e_1,f_1$ be associated to the next non-zero diagonals
of $D$ and $E$, respectively. Likewise we define $h_i,e_i,f_i$ for all
$0\leq i< n/2$, and agree that $h_i,e_i,f_i$ are 0 for $i\geq n/2$.

Let $M$ be the span of all elements associated to the odd diagonals of $P$.

We see that the following relations hold in $L=C_{\so(2n)}(Y)$:
$$
[M,L]=0,[h_i,h_j]=0,[h_i,e_j]=2e_{i+j},[h_i,f_j]=-2f_{i+j},[e_i,f_j]=h_{i+j}.
$$
This proves all but the last assertion. To see this, let
$B=\{u_1,\dots,u_n,v_1,\dots,v_n\}$. The matrix of $\si^{+-}$
relative to $B$ is $Y$, so $W$ is spanned by
$u_{n-2},u_{n-1},v_1,v_2$. Suppose $n\geq 4$. Then
$h_{\frac{n}{2}-1}\in K$. Since
$[h_{\frac{n}{2}-1},e_0]=2e_{\frac{n}{2}-1}$, we see that
$h_{\frac{n}{2}-1}\in K$ is not central. As $K$ is an abelian
ideal of $L(f)$, the proof is complete.
\end{proof}

\begin{cor}
\label{xcor} Suppose that $\ell\neq 2$ and $f$ has at least one
indecomposable component with Gram matrix $A_n(1)$, $n>2$ even.
Then $L(f)$ is not reductive.
\end{cor}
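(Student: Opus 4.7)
The plan is to reduce via the structural decompositions already developed, and then exhibit a non-central abelian ideal of $L(f)$ by lifting the one constructed in Proposition \ref{x1}. Since $f \neq 0$, Proposition \ref{jodd3} tells us that if $L(f)$ is reductive then $V_{\mathrm{odd}} = 0$ and $L(f) \cong L(f_{\mathrm{even}}) \oplus L(f_{\mathrm{ndeg}})$ with both summands reductive; the $A_n(1)$ component lies in $f_{\mathrm{ndeg}}$, so it suffices to show $L(f_{\mathrm{ndeg}})$ fails to be reductive. Renaming, I may assume $f$ is non-degenerate. Using the primary decomposition of $L(f)$ from the section on the canonical decomposition of $V_{\mathrm{ndeg}}$ and noting $(X-1)^* = -(X-1)$, the summand $L(f_1) = L(f|_{V_1})$ splits off. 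Since an $A_n(1)$ component has asymmetry with sole eigenvalue $1$, it sits entirely inside $V_1$, so I may further assume $V = V_1$, i.e.\ $\sigma - 1_V$ is nilpotent.

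With these reductions in place, I apply the relevant machinery. Since $-1$ is not an eigenvalue of $\sigma$, Lemmas \ref{ze1}--\ref{ze3} give that $f^+$ is non-degenerate, $\sigma^{+-} \in L(f^+)$, and $L(f) = C_{L(f^+)}(\sigma^{+-})$. Moreover $\sigma^{+-} = (1+\sigma)^{-1}(1-\sigma)$ is a polynomial in $\sigma$, so by Lemma \ref{lem21a} it commutes with every element of $L(f)$; consequently $W := \ker(\sigma^{+-})^2$ is an $L(f)$-invariant subspace of $V$.

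I then exhibit the non-central abelian ideal
$$
K = \{\, z \in L(f) : zV \subseteq W \text{ and } zW = 0 \,\}.
$$
Abelianness is immediate from $z_1 z_2 V \subseteq z_1 W = 0$ for $z_1, z_2 \in K$; the ideal property uses $L(f)$-invariance of $W$, since $[w,z]V \subseteq wW + zV \subseteq W$ and $[w,z]W \subseteq zW = 0$ for every $w \in L(f)$ and $z \in K$. To show $K$ is not central, decompose $V = V' \oplus V''$ orthogonally with $V'$ a chosen $A_n(1)$ summand of $f$; both pieces are $\sigma$-invariant, so $W \cap V'$ coincides with the subspace $W$ appearing in Proposition \ref{x1}. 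Extending the elements $h_{n/2-1}$ and $e_0$ from that proposition by zero on $V''$ gives $x, y \in L(f)$ with $x \in K$ and $[x,y]$ equal on $V'$ to $2 e_{n/2-1} \neq 0$, proving non-centrality.

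The delicate point is lifting the abelian ideal built inside $L(f|_{V'})$ in Proposition \ref{x1} to an abelian ideal of the larger $L(f)$. This succeeds precisely because $W = \ker(\sigma^{+-})^2$ is defined intrinsically from $\sigma$, which commutes with all of $L(f)$; the extensions by zero of the Proposition \ref{x1} elements then automatically land in $K$, even though the orthogonal complement $V''$ need not itself be $L(f)$-invariant.
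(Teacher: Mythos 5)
Your proof is correct and follows essentially the same route as the paper's: reduce to the case where $f$ is non-degenerate with $\sigma-1_V$ nilpotent, take $W=\ker(\sigma^{+-})^2$ and $K=\{x\in L(f): xV\subseteq W,\ xW=0\}$, and detect non-centrality of this abelian ideal via the subalgebra $L(A_n(1))$ from Proposition \ref{x1}. The only difference is that you spell out the reductions (Proposition \ref{jodd3}, the primary decomposition, the $L(f)$-invariance of $W$, and the extension-by-zero step) that the paper's terse proof leaves implicit.
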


\begin{proof} We may assume without loss of generality that $f$ is
non-degenerate and $\si-1_V$ is nilpotent. In this case $f^+$ is
non-degenerate, so $\si^{+-}$ is well-defined.

Let $W=\ker(\si^{+-})^2$ and set $K=\{x\in L(f)\,|\, xV\subseteq
W\text{ and }x W=0\}$. It is an abelian ideal of $L(f)$. The fact
that it is not central follows by looking at the subalgebra
$L(A_n(1))$ considered in Proposition \ref{x1}.
\end{proof}

\begin{cor}
\label{casoextra} Suppose that $\ell\neq 2$ and $f$ has at least
two indecomposable components with Gram matrix $A_2(1)$. Then
$L(f)$ is not reductive.
\end{cor}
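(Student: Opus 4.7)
The plan is to mirror the proof of Corollary \ref{xcor}, with one modification: since $\si^{+-}$ has nilpotency index $2$ on each $A_2(1)$ component, $\ker(\si^{+-})^2$ coincides with $V$ and produces only the zero ideal. Instead I use $W=\ker\si^{+-}$.

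First I reduce to the case where $f$ is non-degenerate with $\si-1_V$ nilpotent. If $V_{\mathrm{odd}}\ne 0$, Proposition \ref{jodd3} already gives non-reductivity. Otherwise $L(f)\cong L(f_{\mathrm{even}})\oplus L(f_{\mathrm{ndeg}})$, and since the two $A_2(1)$ components are non-degenerate they lie in $f_{\mathrm{ndeg}}$, so I may assume $f$ non-degenerate. The decomposition of $L(f)$ along the primary decomposition of $V$ under $\si$ then writes $L(f)$ as a direct sum of ideals; both $A_2(1)$ components, having $\si$ with elementary divisor $(X-1)^2$, lie in $V_1$, so I may further restrict to $f=f|_{V_1}$ and assume $\si-1_V$ nilpotent. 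In this situation $p_\si(-1)\ne 0$, so Lemma \ref{ze1} yields $f^+$ non-degenerate and $\si^{+-}$ well-defined. Since $\si^{+-}=(1+\si)^{-1}(1-\si)$ is a polynomial in $\si$ which commutes with $L(f)$ by Lemma \ref{lem21a}, $W=\ker\si^{+-}$ is $L(f)$-invariant, and the argument of Corollary \ref{xcor} makes $K=\{x\in L(f):xV\subseteq W,\ xW=0\}$ an abelian ideal of $L(f)$.

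It remains to show $K$ is not central. Let $U_1,U_2\subset V$ be the two $A_2(1)$ components and write $V=U_1\perp U_2\perp V'$. Applying the setup of the present section to $f|_{U_1\oplus U_2}$, whose Gram matrix is $A_2(1)\oplus A_2(1)$, realizes $L(f|_{U_1\oplus U_2})$ as $C_{\so(8)}(Y)$ with $Y=\mathrm{diag}(L,-L')$ and $L=J_2(0)\oplus J_2(0)$. Inspection of $C_{\gl(4)}(L)$ produces an element $N_0\in L(f|_{U_1\oplus U_2})$ whose top-left block is the matrix unit $e_{23}$ and whose skew-symmetric off-diagonal blocks vanish; direct verification shows $N_0$ maps $U_1\oplus U_2$ into $\ker\si^{+-}|_{U_1\oplus U_2}$, kills this kernel, and satisfies $[N_0,M_0]\ne 0$ where $M_0$ has top-left block $\mathrm{diag}(1,1,0,0)$. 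Extending $N_0$ and $M_0$ by zero on $V'$ gives $N,M\in L(f)$; since $\si^{+-}$ respects the orthogonal decomposition $V=(U_1\oplus U_2)\perp V'$, we have $N\in K$, and $[N,M]$ is nonzero because its restriction to $U_1\oplus U_2$ equals the nonzero $[N_0,M_0]$. Hence $K$ is a non-central abelian ideal of $L(f)$, and $L(f)$ is not reductive. The only non-routine step is the matrix verification in this last paragraph, which reduces to finite $4\times 4$ block identities closely analogous to those in the proof of Proposition \ref{x1}.
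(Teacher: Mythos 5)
Your proposal is correct and follows essentially the same route as the paper: the paper likewise reduces to the Gram matrix $A_2(1)\oplus A_2(1)$, forms $W=\ker\si^{+-}$ and the abelian ideal $K=\{x\in L(f):xV\subseteq W,\ xW=0\}$, and exhibits two explicit elements of $C_{\so(8)}(Y)$ (with $P$-blocks $e_{21}$ and $e_{13}+e_{24}$ rather than your $e_{23}$ and $\mathrm{diag}(1,1,0,0)$, both choices check out) whose commutator is nonzero. Your only addition is to spell out the reduction to this special case, which the paper dismisses with ``the general case follows easily from this one.''
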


\begin{proof} Suppose first that the Gram matrix $f$ relative to
some basis is $A_2(1)\oplus A_2(1)$. Let $W=\ker\si^{+-}$ and set
$K=\{x\in L(f)\,|\, xV\subseteq W\text{ and }x W=0\}$. Let $x,y\in
L(f)$ be represented by matrices $N$ with blocks $P$ respectively
equal to
$$
\left(%
\begin{array}{cccc}
  0 & 0 & 0 & 0 \\
  1 & 0 & 0 & 0 \\
  0 & 0 & 0 & 0 \\
  0 & 0 & 0 & 0 \\
\end{array}%
\right)\text{ and }\left(%
\begin{array}{cccc}
  0 & 0 & 1 & 0 \\
  0 & 0 & 0 & 1  \\
  0 & 0 & 0 & 0 \\
  0 & 0 & 0 & 0 \\
\end{array}%
\right)
$$
relative to the basis $B$ used prior to Proposition \ref{x1}. Then
$x\in K$ and $[x,y]\neq 0$. The general case follows easily from
this one.
\end{proof}

\begin{note} An elementary matrix representation of a current
truncated Lie algebra $L\otimes F[X]/(X^n)$ can be obtained from a
representation $R:L\to\gl(m)$, as follows. Set $M=R(L)$ and for
$A_0,\dots, A_{n-1}\in M$  consider the block upper triangular
matrix $M(A_0,\dots,A_{n-1})\in\gl(mn)$ given by:
$$
\left(\begin{array}{ccccc}
A_0 & A_1   & A_2    & \cdots & A_{n-1}\\
  & \ddots  & \ddots & \ddots &  \vdots       \\
  &         & \ddots & \ddots &  A_2  \\
  &         &        & \ddots &  A_1  \\
  &         &        &        &  A_0
\end{array}\right)
$$
Then $a_0\otimes 1+\cdots+a_{n-1}\otimes X^{n-1}\mapsto
M(R(a_0),\dots,R(a_{n-1}))$ is a representation of $L\otimes
F[X]/(X^n)$.
\end{note}

\section{Dealing with $V_{-1}$: first round}\label{secmenosuno}

We suppose here that $\ell\neq 2$. As in \S\ref{secuno}, we wish
to study the reductivity of $L(f)$ when $f$ is non-degenerate and
$\si+1_V$ is nilpotent, where $\si$ is the asymmetry of $f$. We
assume until further notice that $f$ has Gram matrix $A_n(-1)$
relative to some basis of $V$. Let $S$ be the matrix of $\sigma$
in this basis. Then
$$
S=A^{-1}A'=\left(
    \begin{array}{cc}
      J^{-1} & 0 \\
      0 & J' \\
    \end{array}
  \right),
$$
so $\sigma$ has only one eigenvalue, namely -1, and elementary
divisors $(X+1)^n$, $(X+1)^n$.

%It follows from Theorem 2.3.1 of \cite{W} that $f$ is indecomposable if $n$ is odd.

Moreover, since $p_\si(1)\neq 0$ now $f^-$ is non-degenerate. Let $S^{-+}$ be the matrix of $\si^{-+}$ relative to our basis. Then
$$
S^{-+}=(A-A')^{-1}(A+A'),
$$
which is nilpotent with elementary divisors $X^n,X^n$. Reasoning
as in \S\ref{secuno} we see that there is a basis $B$ of $V$
relative to which $L(f)$ is represented by $C_{\sy(2n)}(Y)$, where
$$
C=\left(
    \begin{array}{cc}
      0 & I \\
      -I & 0 \\
    \end{array}
  \right),
$$
$\sy(2n)=L(C)$ and
$$
 Y=\left(
    \begin{array}{cc}
      J_n(0) & 0 \\
      0 & -J_n(0)' \\
    \end{array}
  \right)\in\sy(2n).
$$
\begin{prop}
\label{x2}
 Suppose that $\ell\neq 2$, that $n$ is odd, and that $f$ has Gram matrix
 $A_n(-1)$ relative to some basis of $V$. Then $L(f)$ has dimension $2n+1$
and is isomorphic to the current truncated Lie algebra
$\gl(2)\otimes F[X]/(X^{(n+1)/2})$ modulo a one dimensional
central ideal. In particular, $L(f)$ is reductive if and only if
$n=1$. Moreover, let $W=\ker\si^{+-}$ and set
$$
K=\{x\in L(f)\,|\, xV\subseteq W\text{ and }x W=0\}.
$$
Then $K$ is an abelian ideal of $L(f)$ not contained in its
center, provided $n>1$.
\end{prop}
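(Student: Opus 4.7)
The plan is to parallel the argument of Proposition \ref{x1}, exploiting the reduction established earlier in this section that $L(f) \cong C_{\sy(2n)}(Y)$, where
$$
Y = \left(\begin{array}{cc} J_n(0) & 0 \\ 0 & -J_n(0)' \end{array}\right) \in \sy(2n).
$$
Writing a general element of $\sy(2n)$ as $N = \left(\begin{array}{cc} P & D \\ E & -P' \end{array}\right)$ with $D, E$ symmetric, the condition $[Y,N]=0$ amounts to: $P$ commutes with $L = J_n(0)$, so $P$ is a lower-triangular Toeplitz matrix, while $LD + DL' = 0$ and $L'E + EL = 0$. These last two relations force entries along each antidiagonal of $D$ (respectively $E$) to alternate in sign. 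Combined with symmetry and the boundary $D_{1,k}=0$ for $k<n$, parity compatibility (using $n$ odd) lets precisely the antidiagonals of index $c = n+1, n+3, \ldots, 2n$ in $D$ survive, each contributing one free parameter, and likewise for $E$. Thus $\dm L(f) = n + 2 \cdot (n+1)/2 = 2n+1$.

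Next I introduce generators mirroring Proposition \ref{x1}. For $0 \le i \le (n-1)/2$ let $h_i$ be the element with $P = J_n(0)^{2i}$ and $D=E=0$, let $e_i$ have $D$ supported (with appropriate normalization) on antidiagonal $n+1+2i$ and $P = E = 0$, and define $f_i$ analogously in $E$. Let $M$ denote the $(n-1)/2$-dimensional span of the elements with $P$ on an odd-distance subdiagonal. A block computation in the style of \ref{x1} yields
$$
[M, L(f)] = 0,\ [h_i, h_j] = 0,\ [h_i, e_j] = 2\, e_{i+j},\ [h_i, f_j] = -2\, f_{i+j},\ [e_i, f_j] = h_{i+j},
$$
with the convention that any generator of index $\ge (n+1)/2$ is $0$. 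These are the defining relations of $\sl(2) \otimes F[X]/(X^{(n+1)/2})$. Since the quotient of $\gl(2) \otimes F[X]/(X^{(n+1)/2})$ by the $1$-dimensional central ideal $F\cdot I_2 \otimes 1$ decomposes as a Lie algebra into $\sl(2) \otimes F[X]/(X^{(n+1)/2})$ direct sum an $(n-1)/2$-dimensional abelian factor (the remainder of its center), we obtain the claimed isomorphism, with $M$ realizing the abelian factor. Reductivity then follows: for $n>1$ the subspace $\sl(2) \otimes (X F[X]/(X^{(n+1)/2}))$ is a non-central nilpotent ideal of $L(f)$, so $L(f)$ is not reductive; for $n=1$ we obtain $\sl(2)$, which is simple.

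For the final assertion, $\si^{-+}$ is a polynomial in $\si$, hence commutes with every $x \in L(f)$ by (an analogue of) Lemma \ref{lem21a}; so $W = \ker \si^{-+}$ is $L(f)$-invariant. Any two elements of $K$ compose to zero --- the image of one lies in $W$, which the other annihilates --- so $K$ is abelian, and invariance of $W$ makes $K$ an ideal. In the basis used above, $W = F u_n + F v_1$, and imposing $xV \subseteq W$ and $xW=0$ on the explicit parametrization forces $P$ to be a scalar multiple of $J_n(0)^{n-1}$, $D$ to be supported at $(n,n)$, and $E$ at $(1,1)$; hence $h_{(n-1)/2}, e_{(n-1)/2}, f_{(n-1)/2} \in K$. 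The bracket $[h_{(n-1)/2}, e_0] = 2\, e_{(n-1)/2} \ne 0$ for $n>1$ shows $h_{(n-1)/2}$ is non-central in $L(f)$, completing the proof.

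The main obstacle is the parity bookkeeping in the first step: because $n$ is odd rather than even, the combined symmetry and alternation conditions select a different family of antidiagonals from those appearing in \ref{x1}, and one has to check carefully that these yield $(n+1)/2$ free parameters each and account for the truncation exponent $(n+1)/2$ in the final structure. Once the parameter count is right, the remaining block arithmetic and identification with the current truncated algebra are routine.
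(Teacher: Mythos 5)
Your proof is correct and takes essentially the same route as the paper's, which simply carries the argument of Proposition \ref{x1} over to the symplectic setting $C_{\sy(2n)}(Y)$, with the same generators $h_i,e_i,f_i$ (now indexed by $0\le i\le (n-1)/2$), the same dimension count $n+2\cdot\frac{n+1}{2}=2n+1$, and the same witness $[h_{(n-1)/2},e_0]=2e_{(n-1)/2}$ for the non-centrality of $K$. You also correctly work with $W=\ker\si^{-+}$ --- the operator that is actually defined here, since $f^{+}$ is degenerate when $-1$ is an eigenvalue of $\si$ --- rather than the $\ker\si^{+-}$ appearing in the statement.
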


\begin{proof} This goes much as the proof of Proposition \ref{x1} mutatis mutandis. Explicitly, $D$ and $E$ are now symmetric, so
the conditions to commute with $Y$ become
$$
J_n(0)P=PJ_n(0),\quad J_n(0)D=-(J_n(0)D)',\quad EJ_n(0)=-(EJ_n(0))'.
$$
For instance, if $n=5$ we obtain the matrices
$$
\left(\begin{array}{ccccc|ccccc}
a &0 &0 &0 &0 &0  &0 &0  &0 &j\\
b &a &0 &0 &0 &0  &0 &0  &-j&0\\
c &b &a &0 &0 &0  &0 &j  &0 &k\\
d &c &b &a &0 &0  &-j&0  &-k&0\\
e &d &c &b &a &j  &0 &k  &0 &l\\\hline
g &0 &h &0 &i &-a &-b&-c &-d&-e\\
0 &-h&0 &-i &0&0  &-a&-b &-c&-d\\
h &0 &i &0 &0 &0  &0 &-a &-b&-c\\
0 &-i&0 &0 &0 &0  &0 &0  &-a&-b\\
i &0 &0 &0 &0 &0  &0 &0  &0 &-a\\
\end{array}\right)
$$
The general description of the blocks $P,D,E$ is identical to the
one given in Proposition \ref{x1}.
%only that the non-zero diagonals of $D,E$ have odd
%length, so they now have the form $a,-a,\dots,a,-a,a$.
The non-zero elements $h_i,e_i,f_i$ are defined for $0\leq i\leq
(n-1)/2$. These changes take care of all assertions but the last.
For this, notice that now $W$ is spanned by $u_n,v_1$. Suppose
$n\geq 3$. Then $h_{(n-1)/2}\in K$. Since
$[h_{(n-1)/2},e_0]=2e_{(n-1)/2}$, it follows that $h_{(n-1)/2}\in
K$ is not central.
\end{proof}

Exactly as in \S\ref{secuno} we now obtain the following

\begin{cor}
\label{xcor2} Suppose that $\ell\neq 2$ and $f$ has at least one
indecomposable component with Gram matrix $A_n(-1)$, $n>1$ odd.
Then $L(f)$ is not reductive.
\end{cor}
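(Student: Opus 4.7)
The plan is to transplant the proof of Corollary \ref{xcor} verbatim with the signs flipped: replace $A_n(1)$ by $A_n(-1)$, the condition ``$\si - 1_V$ nilpotent'' by ``$\si + 1_V$ nilpotent'', $f^+$ by $f^-$, and $\si^{+-}$ by $\si^{-+}$ throughout. The final ingredient, which replaces the appeal to Proposition \ref{x1}, will be Proposition \ref{x2}.

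First I would reduce to the case where $f$ is non-degenerate and $\si + 1_V$ is nilpotent on all of $V$. Proposition \ref{jodd3} lets us dispose of $V_{\mathrm{odd}}$ (since $f\neq 0$) and of $V_{\mathrm{even}}$, while the primary decomposition $L(f_{\mathrm{ndeg}})=\bigoplus_p L(f_p)$ from Section~6 isolates the direct summand $L(f_{-1})$, to which the hypothesized $A_n(-1)$ component contributes. Reductivity is inherited summand by summand, so it suffices to prove that this summand is not reductive.

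Next, under this reduction $p_\si(1)\neq 0$, so Lemma \ref{ze1}(a) gives that $f^-$ is non-degenerate and hence $\si^{-+}$ is well-defined. I would set
\[
W=\ker \si^{-+}, \qquad K=\{x\in L(f)\mid xV\subseteq W \text{ and } xW=0\},
\]
and verify that $K$ is an abelian ideal of $L(f)$: it is abelian because any product of two of its elements sends $V$ into $W$ and then to $0$; it is an ideal because, by Lemma \ref{ze3}(a), $L(f)=C_{L(f^-)}(\si^{-+})$, so every $x\in L(f)$ commutes with $\si^{-+}$, making $W$ an $L(f)$-invariant subspace and forcing $[L(f),K]\subseteq K$.

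Finally, to see that $K$ is not central, I would restrict attention to the indecomposable summand $V_1\subseteq V$ with Gram matrix $A_n(-1)$, $n>1$ odd. Proposition \ref{x2} produces inside $L(f|_{V_1})$ elements $h_{(n-1)/2}$ and $e_0$ with $h_{(n-1)/2}$ lying in the analog of $K$ on $V_1$ and with $[h_{(n-1)/2},e_0]=2e_{(n-1)/2}\neq 0$. Extending both operators by zero on the orthogonal complement of $V_1$ lands them in $L(f)$, keeps $h_{(n-1)/2}$ inside $K$, and preserves the nonzero commutator, so $K$ is not central in $L(f)$. The only routine check is that the extension-by-zero is compatible with $W$, which follows from the block-diagonal action of $\si^{-+}$ on $V=V_1\perp V_1^\perp$; this is the one spot requiring care, but it poses no real obstacle.
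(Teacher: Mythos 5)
Your proposal is correct and follows essentially the same route as the paper, which proves this corollary by declaring it "exactly as in" the $A_n(1)$ case (Corollary \ref{xcor}): reduce to $f$ non-degenerate with $\si+1_V$ nilpotent, form $W=\ker\si^{-+}$ and the abelian ideal $K$, and exhibit a non-central element of $K$ via Proposition \ref{x2}. You have merely filled in the details (the primary-decomposition reduction, the ideal property via Lemma \ref{ze3}, the extension-by-zero compatibility) that the paper leaves implicit.
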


\section{Dealing with $V_1$ and $V_{-1}$: second round}

We assume throughout this section that $\ell\ne 2$.

%In \S\ref{secuno} we analyzed the reductivity of $L(f)$ when $f$
%has is non-degenerate, with unipotent asymmetry, and can be
%decomposed solely in terms of Gram matrices $A_n(1)$ with $n$
%even. Here we continue this analysis by allowing all
%decompositions of $f$ by means of Gram matrices $\Gamma_n$, with
%$n$ odd, and $A_n(1)$ with $n$ even. When $F$ is algebraically
%closed these are all possible cases and our results are complete.
%In general, we determine when $L(f)$ is absolutely reductive,
%which is sufficient for the purposes stated in the Introduction.

\begin{lemma}\label{lega} Suppose $f$ has Gram matrix $\Gamma_n$ relative to a basis of
$V$. Then $f$ is non-degenerate and the corresponding matrix of
the asymmetry $\sigma$ is
$$
S_n = \Gamma_n^{-1}\Gamma_n' = \epsilon \left( \begin{matrix}
1       & -2        & 2         &  \cdots   & 2\epsilon\\
0       & 1         & -2        &  \ddots   &\vdots\\
0       & 0         & 1         & \ddots    &  \vdots\\
\vdots  & \vdots    & \vdots    & \ddots        & -2\\
0       & 0         & 0         &\cdots     & 1
\end{matrix} \right),\, \epsilon=(-1)^{n+1},
$$
which is cyclic with minimal polynomial $(X-\epsilon)^n$.
Moreover, $L(\Gamma_n)$ is the Lie algebra of all $n\times n$
upper triangular matrices whose super-diagonals have equal entries
and all super-diagonals at even distance from the main diagonal
are zero. In particular, $L(f)$ is abelian of dimension $n/2$ if
$n$ is even and $(n-1)/2$ if $n$ is~odd.
\end{lemma}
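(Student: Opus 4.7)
The plan is to establish non-degeneracy, compute the asymmetry $\sigma$ explicitly, use its cyclicity to squeeze $L(\Gamma_n)$ into the polynomial algebra $F[\sigma]$, and then identify which polynomials in $\sigma$ are skew-adjoint. From the displayed pattern of the $\Gamma_n$'s one reads off that $\Gamma_n$ is supported on two anti-diagonals: the entry at $(i,n+1-i)$ equals $(-1)^{n+i}$, the entry at $(i,n+2-i)$ (for $i\ge 2$) equals $(-1)^{n+i}$, and all other entries vanish. In particular $\Gamma_n$ is anti-triangular with $\pm 1$ on the anti-diagonal, so $\det\Gamma_n=\pm 1$ and $f$ is non-degenerate; the identity $\Gamma_n S_n=\Gamma_n'$ for the claimed upper triangular Toeplitz matrix $S_n$ is then a direct verification, one super-diagonal at a time.

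Since $S_n$ is upper triangular with diagonal entry $\epsilon$, $\sigma$ has characteristic polynomial $(X-\epsilon)^n$. Setting $N=S_n-\epsilon I$, we get a Toeplitz upper triangular matrix with $-2\epsilon$ on its first super-diagonal, so the top-right entry of $N^{n-1}$ is $(-2\epsilon)^{n-1}\neq 0$ (using $\chr(F)\neq 2$) while $N^n=0$. Hence the minimal polynomial also equals $(X-\epsilon)^n$, and $\sigma$ is cyclic. By Lemma \ref{lem21a}, every element of $L(\Gamma_n)$ commutes with $\sigma$, and cyclicity gives $C_{\gl(n)}(\sigma)=F[\sigma]$, a commutative $n$-dimensional subalgebra. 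Let $U$ denote the $n\times n$ upper shift matrix (with $1$'s on the first super-diagonal), so that $U^k$ is precisely the super-diagonal at distance $k$. Since $\sigma$ is Toeplitz upper triangular, $\sigma\in F[U]$, whence $F[\sigma]\subseteq F[U]$; equality of dimensions yields $F[\sigma]=F[U]=\langle I,U,\ldots,U^{n-1}\rangle$, and therefore $L(\Gamma_n)\subseteq F[U]$.

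By linearity it suffices to test each $U^k$ individually for skew-adjointness. Right multiplication by $U^k$ shifts the columns of $\Gamma_n$ to the right by $k$, while left multiplication by $(U')^k$ shifts its rows down by $k$. Combined with the above description of the nonzero entries of $\Gamma_n$, a direct entry-by-entry calculation shows that the $(i,j)$-entry of $(U')^k\Gamma_n+\Gamma_n U^k$ equals $(-1)^{n+i}\bigl(1+(-1)^k\bigr)$ when $i+j\in\{n+1+k,\,n+2+k\}$ and vanishes otherwise. Since $\chr(F)\neq 2$, this is zero for all $(i,j)$ iff $k$ is odd, so $L(\Gamma_n)$ is spanned by the $U^k$ with $k$ odd and $1\le k\le n-1$. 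This is exactly the claimed Lie algebra of upper triangular Toeplitz matrices with zero even super-diagonals; it is abelian since it lies in $F[U]$, and its dimension equals the number of odd integers in $\{1,\ldots,n-1\}$, namely $n/2$ for $n$ even and $(n-1)/2$ for $n$ odd. The main obstacle is the bookkeeping in this last entry-by-entry analysis, especially the boundary cases where the shifted anti-diagonals $i+j=n+1+k$ and $i+j=n+2+k$ graze the edges of the matrix: one must verify that whenever $i\le k$ or $j\le k$ both summands vanish, either because the shift moves off the matrix or because the relevant entry of $\Gamma_n$ lies in its strictly upper anti-triangular zero region, so that no spurious nonzero entries appear outside the bulk diagonals.
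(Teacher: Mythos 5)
Your proposal is correct and follows essentially the same route as the paper's proof: both use the fact that $L(\Gamma_n)$ commutes with the cyclic asymmetry $\sigma$ to confine $L(\Gamma_n)$ to the abelian algebra of upper triangular Toeplitz matrices (the paper phrases this as $S_n$ and $J_n(\epsilon)'$ being polynomials in each other), and then extract the odd super-diagonals from the condition $D'\Gamma_n+\Gamma_n D=0$. Your version merely supplies the explicit entries of $\Gamma_n$ and the boundary bookkeeping that the paper leaves implicit.
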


\begin{proof} Clearly $S_n$ and $J_n(\epsilon)'$ are polynomials in each other,
so their centralizers coincide. Further, we know that
$[L(\Gamma_n), S_n]=0$, so $L(\Gamma_n)$ is contained in the
abelian Lie algebra $C_{\mathfrak{gl}(n)}(J_n(\epsilon)')$ of all
upper triangular matrices whose super-diagonals have equal
entries. From the condition $D'\Gamma_n+\Gamma_nD=0$ we see that
$D\in L(\Gamma_n)$ if and only if all super-diagonals at even
distance from the main diagonal are zero.
\end{proof}

%\section{Proofs for the $V_1$ case}

%Throughout this section we assume $\ell\ne 2$.

\begin{lemma}%\label{V1b}
Suppose that $f$ has at least two indecomposable components with
Gram matrices $\Gamma_r,\Gamma_s$, with $r,s>1$ of the same
parity. Then $L(f)$ is not reductive.
\end{lemma}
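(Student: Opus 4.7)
The strategy is to use the primary decomposition of Section 6 to isolate the generalized eigenspace of the asymmetry that contains both $\Gamma_r$ and $\Gamma_s$, and then exhibit a non-central abelian ideal in the corresponding summand of $L(f)$. Let $\epsilon=(-1)^{r+1}=(-1)^{s+1}$. By Theorem \ref{sergei}, each indecomposable $\Gamma_n$-component of $f$ whose size $n$ satisfies $\epsilon=(-1)^{n+1}$ contributes a single Jordan block of size $n$ to $\sigma\vert_{V_\epsilon}$. In particular, both $\Gamma_r$ and $\Gamma_s$ contribute Jordan blocks of sizes $r$ and $s$ to $\sigma\vert_{V_\epsilon}$. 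Since $(X-\epsilon)^\ast=\pm(X-\epsilon)$, the subspace $V_\epsilon$ appears as one of the $V_{q_j}$ summands in the decomposition preceding Lemma \ref{lem83}, so $L(f)=L(f_\epsilon)\oplus(\cdots)$ as a direct sum of ideals, where $f_\epsilon$ denotes the restriction of $f$ to $V_\epsilon$. It therefore suffices to prove $L(f_\epsilon)$ is not reductive.

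I next apply the framework of Section \ref{secuno} (when $\epsilon=1$) or Section \ref{secmenosuno} (when $\epsilon=-1$) to $f_\epsilon$. Since $-\epsilon$ is not an eigenvalue of $\sigma\vert_{V_\epsilon}$, Lemma \ref{ze1} ensures $f_\epsilon^+$ (resp.\ $f_\epsilon^-$) is non-degenerate, and Lemma \ref{ze3} then gives an isomorphism $L(f_\epsilon)\cong C_{L(f_\epsilon^+)}(\sigma^{+-})$ (resp.\ $C_{L(f_\epsilon^-)}(\sigma^{-+})$). Writing $T$ for the relevant operator $\sigma^{+-}$ or $\sigma^{-+}$, the calculations at the start of Sections \ref{secuno} and \ref{secmenosuno} (together with the observation that $\sigma\vert_{V_\epsilon}$ is cyclic on each $\Gamma_n$-block) show that $T$ is nilpotent with Jordan blocks whose sizes include $r$ and $s$, both greater than $1$.

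Set $W=\ker T$ and
\[
K=\{x\in L(f_\epsilon)\,|\,xV_\epsilon\subseteq W\text{ and }xW=0\}.
\]
Because $T$ commutes with every element of $L(f_\epsilon)$, the subspace $W$ is $L(f_\epsilon)$-invariant, so $K$ is an ideal of $L(f_\epsilon)$; it is abelian because $x^2=0$ for each $x\in K$. This mirrors the construction at the end of Propositions \ref{x1} and \ref{x2}.

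It remains to show $K$ is not central, which I expect to be the main obstacle. The presence in $T$ of two Jordan blocks of sizes $r,s>1$ means that $\dim W\ge 2$ and that $C_{L(f_\epsilon^\pm)}(T)$ contains off-diagonal elements linking the subspaces supporting the two blocks, analogous to the generators labeled $e_i,f_i$ in the proofs of Propositions \ref{x1} and \ref{x2}. Concretely, one chooses an $x\in K$ that sends the top of one Jordan chain to the bottom of the other and a $y\in L(f_\epsilon)$ whose off-diagonal part reaches the top of the first chain; then $[y,x]$ sends the top of the first chain to a nonzero vector outside $\ker(\text{central elements})$, so $[y,x]\ne 0$. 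The verification is an explicit matrix calculation in the block decomposition of $C_{\so(N)}(T)$ or $C_{\sy(N)}(T)$ induced by the two Jordan blocks of $T$, entirely parallel in spirit to the block descriptions used in the proofs of Propositions \ref{x1} and \ref{x2}, but now with two blocks of possibly different sizes rather than the paired blocks of a single $A_n(\pm 1)$.
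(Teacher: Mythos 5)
Your reduction to $V_\epsilon$ via the primary decomposition and the passage to $C_{L(f_\epsilon^{\pm})}(\sigma^{\pm\mp})$ are sound, and they do differ from the paper, which computes directly with the Gram matrix $\Gamma_r\oplus\Gamma_s$. But the step you defer as ``the main obstacle'' --- non-centrality of $K$ --- is not merely unverified: for the ideal you actually define it is false. Take $r=s=3$ and the basis $\{v_1,v_2,v_3,w_1,w_2,w_3\}$. By Lemma \ref{lega} and the block description of $L(\Gamma_3\oplus\Gamma_3)$, this Lie algebra is $5$-dimensional, spanned by the shift element of each $L(\Gamma_3)$ and by the three elements coming from the three diagonals of the off-diagonal block $Q$. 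Here $W=\ker T=\ker(\sigma-1_V)=\mathrm{span}\{v_1,w_1\}$, and the conditions $xV\subseteq W$, $xW=0$ force all rows of the matrix of $x$ other than those of $v_1$ and $w_1$ to vanish; this eliminates both shift elements and the first two diagonals of $Q$, leaving only the line spanned by the element $z$ with $zv_3=w_1$, $zw_3=-v_1$ and $z$ zero elsewhere. A direct check against each of the five basis elements gives $[y,z]=0$ for every $y$, so $K=\langle z\rangle$ is central. Note that $z$ is exactly the map ``sending the top of one chain to the bottom of the other'' that you propose as witness. The analogy with Propositions \ref{x1} and \ref{x2} breaks down here: there the non-central witness $h_i$ lives in the diagonal block $P$, which is available because $A_n(\pm1)$ gives $\sigma$ two \emph{paired} Jordan blocks, whereas each $\Gamma_n$ contributes a single block and $L(\Gamma_n)$ is abelian, so no such diagonal witness lands in your $K$.

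The repair is to enlarge the ideal, which is what the paper does: with $U_i=\ker(\sigma-1_V)^i$ one takes the nilpotent ideal $J=\{x\in L(f)\mid xU_i\subseteq U_{i-1}\text{ for all }i\}$, of which your $K$ is only the bottom layer. This $J$ contains the shift element $x$ of $L(\Gamma_r)$ (the one with $C=J_r(0)'$ and all other blocks zero), and pairing it with the off-diagonal element $y$ having $Q=\left(\begin{smallmatrix} I_r\\ 0\end{smallmatrix}\right)$ yields $[x,y]v_2=-w_1\neq 0$. So the overall architecture of your argument can be kept, but the ideal must be taken relative to the full filtration by the $\ker T^i$, not just $\ker T$, and the non-central element must be sought among the within-block shifts rather than the corner maps between blocks.
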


%\begin{theorem}\label{V1a}
%Suppose that $f$ has Gram matrix $\Gamma_r\oplus\Gamma_s$ relative to a basis $B$ of $V$, where $r,s$ are odd and bigger than 1. Then $L(f)$ is not reductive.
%\end{theorem}

\begin{proof} Suppose first that $f$ has Gram matrix $T=\Gamma_r\oplus\Gamma_s$, with $1<r\leq s$ of the same
parity, relative to a basis $B$ of $V$. In this case $M\in L(T)$
if and only if
$$
M=\left(%
\begin{array}{cc}
  C & P \\
  Q & D \\
\end{array}%
\right),
$$
where $C\in L(\Gamma_r)$ and $D\in L(\Gamma_s)$ are as described
in Lemma \ref{lega}, $P$ and $Q$ are upper triangular with equal
entries along each diagonal, all but the last $r$
diagonals of $P$ are 0, and these are equal, up to an alternating sign, to
those of~$Q$, where the sign of the first
non-zero diagonal of $P$ is negative.

For instance, if $r=3$ and $s=5$, the matrices $M$ have the form
$$
\left( \begin{array}{ccc|ccccc}
0 & a & 0 & 0 & 0 & -d& h & -g \\
0 & 0 & a & 0 & 0 & 0 & -d& h \\
0 & 0 & 0 & 0 & 0 & 0 & 0 & -d \\\hline
d & h & g & 0 & b & 0 & c & 0 \\
0 & d & h & 0 & 0 & b & 0 & c \\
0 & 0 & d & 0 & 0 & 0 & b & 0 \\
0 & 0 & 0 & 0 & 0 & 0 & 0 & b \\
0 & 0 & 0 & 0 & 0 & 0 & 0 & 0
\end{array}\right).
$$

%\begin{proof}
The elementary divisors of the asymmetry $\sigma$ of $f$ are
$(X-1)^r$ and $(X-1)^s$. Defining $ U_i=\ker(\sigma-1_V)^i $ for
$0\le i\le s$, we obtain a chain of subspaces
\begin{equation}\label{chain}
0 = U_0 \subseteq U_1 \subseteq U_2 \subseteq \cdots \subseteq U_s = V.
\end{equation}

The fact that $\sigma$ commutes with all elements of $L(f)$ implies that all these subspaces are $L(f)$-invariant. It follows that
$$
J = \{ x\in L(f) \,|\, x(U_i)\subseteq U_{i-1} \text{ for } 1\le i\le s\}
$$
is a nilpotent (hence solvable) ideal of $L(f)$. We will show that $J$ is not central.

Let $B=\{v_1,\ldots,v_r,w_1,\ldots,w_s\}$. Looking at the matrix
form of $\sigma$ we see that the chain (\ref{chain}) is actually
$$
\{0\} \subseteq \mathrm{span}\{v_1,w_1\} \subseteq
\mathrm{span}\{v_1,v_2,w_1,w_2\} \subseteq \cdots \subseteq V.
$$
Let $X\in L(T)$ be defined by $C=J_r(0)'$, $D=0$, $Q=0$, and let
$Y\in L(T)$ be defined by $C=0$, $D=0$, and
$$
Q = \left( \begin{matrix}
I_{r}\\
0_{(s-r)\times r}
\end{matrix} \right).
$$
Let $x$ and $y$ be the endomorphisms represented by $X$ and $Y$
relative to $B$. It is clear that $x\in J$. Moreover,
$[x,y]v_2=-w_1\ne 0$, so $J\nsubseteq Z(L(f))$, which shows that
$L(f)$ is not reductive.

The general case follows easily from this one.
\end{proof}

\begin{lemma}\label{oca1}
If $f$ has at least one indecomposable component with Gram matrix
$\Gamma_r$, $r\geq 3$ odd, and at least one with Gram matrix
$A_2(1)$, then $L(f)$ is not reductive.
\end{lemma}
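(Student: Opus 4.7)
The plan is to follow the approach of the preceding lemma. Since any non-central solvable ideal element constructed inside $L(\Gamma_r \perp A_2(1))$ extends by zero on the remaining summands to yield an analogous element of $L(f)$, it suffices to treat the case $f = \Gamma_r \perp A_2(1)$ with $r \geq 3$ odd.

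By Theorem \ref{sergei} and Lemma \ref{lega}, the asymmetry $\si$ of $f$ has elementary divisors $(X-1)^r$, $(X-1)^2$, $(X-1)^2$, so $V = V_1$ and $p_\si = (X-1)^r$. Since $\si$ commutes with every element of $L(f)$ by Lemma \ref{lem21a}, the subspaces $U_i = \ker(\si - 1_V)^i$ form an $L(f)$-invariant chain
$$
0 = U_0 \subsetneq U_1 \subsetneq \cdots \subsetneq U_r = V,
$$
and
$$
K = \{x \in L(f) : xV \subseteq U_1 \text{ and } xU_1 = 0\}
$$
is an abelian ideal of $L(f)$ (abelian because any composition of two such maps vanishes; an ideal because $U_1$ is $L(f)$-invariant).

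The main remaining step is to exhibit $x \in K$ and $y \in L(f)$ with $[x, y] \neq 0$. I would work in block form matching the Gram matrix $\Gamma_r \oplus A_2(1)$, in which a general element of $L(f)$ has shape $\left(\begin{smallmatrix} X & P \\ Q & Y \end{smallmatrix}\right)$ with $X \in L(\Gamma_r)$ (Lemma \ref{lega}), $Y \in L(A_2(1)) \cong \gl(2)$ (Proposition \ref{x1}), and the off-diagonal blocks tied by the single relation $Q' A_2(1) + \Gamma_r P = 0$. For $x$ I would choose a purely off-diagonal element (with $X = Y = 0$) whose $P$-block sends one basis vector of $V_{A_2(1)}$ into the line $U_1 \cap V_{\Gamma_r}$; this automatically places $x$ in $K$ once $Q$ is determined by the linking relation. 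For $y$ I would pick a suitable non-zero element of the $\gl(2) \cong L(A_2(1))$ summand that non-trivially rearranges the basis of $V_{A_2(1)}$, so that $xy$ picks up a contribution in $V_{\Gamma_r}$ not cancelled by $yx$. The main obstacle is this concrete matrix verification, which is routine but requires careful bookkeeping, paralleling the displays in the proofs of the preceding lemma and Corollary \ref{casoextra}.
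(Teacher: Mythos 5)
Your reduction to $f=\Gamma_r\perp A_2(1)$, your identification of the elementary divisors of $\si$, and your choice of ideal are exactly what the paper does: its $J=\{x\in L(f)\,|\,xV\subseteq W,\ xW=0\}$ with $W=\ker(\si-1_V)$ is your $K$ with $W=U_1$, and its witnesses have precisely the shape you describe ($x$ purely off-diagonal with $P=e_{11}$ sending $w_1$ into $U_1\cap V_{\Gamma_r}=\langle v_1\rangle$ and $Q=-e_{3r}$, and $y$ the diagonal element $\mathrm{diag}(1,1,-1,-1)$ of the $L(A_2(1))$-summand, giving $[y,x]v_r=w_3\neq 0$). So the approach is the paper's own and the plan is viable.

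Two caveats about the step you defer. First, the off-diagonal blocks are tied by \emph{two} relations, $Q'A_2(1)+\Gamma_rP=0$ and $P'\Gamma_r+A_2(1)Q=0$, not one; since $\Gamma_r$ and $A_2(1)$ are invertible, each relation alone already determines $Q$ from $P$, and the two answers coincide only when $P$ satisfies a compatibility condition. So you cannot choose $P$ freely and have ``$Q$ determined by the linking relation''; you must verify that your chosen $P$ is admissible (the paper's pair $P=e_{11}$, $Q=-e_{3r}$ is). Second, that explicit verification is the entire substance of the lemma, so as written this is a correct plan rather than a proof; but since the paper's displayed matrices confirm that suitable $x,y$ exist, there is no conceptual gap, only an unexecuted computation.
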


\begin{proof} Assume first that $V$ has a basis
$B=\{v_1,v_2,\ldots,v_r,w_1,w_2,w_3,w_4\}$, with $r\geq 3$ odd,
relative to which $f$ has Gram matrix $\Gamma_r\oplus A_2(1)$.

Let $W=\ker(\sigma-1_V)$. Then $J = \{ x\in L(f) \,|\, xV\subseteq
W \text{ and } xW=0 \} $ is an abelian ideal of $L(f)$. Let
$x,y\in\gl(V)$ have matrices relative to $B$ respectively equal to
$$
\left( \begin{matrix}
0_{r\times r} & e_{11} \\
-e_{3r} & 0_{4\times 4} \\
\end{matrix} \right) \quad \text{and} \quad \left( \begin{array}{c|cccc}
0_{r\times r}   & 0 & 0 & 0 & 0\\\hline
0               & 1 & 0 & 0 &  0\\
0               & 0 & 1 & 0 & 0\\
0               & 0 & 0 & -1    & 0\\
0               & 0 & 0 & 0 & -1
\end{array} \right).
$$

A simple computation shows that both $x$ and $y$ lie in $L(f)$.
Looking at the matrix of $\sigma-1_V$ relative to $B$ we see that
$W=\mathrm{span}\{v_1,w_2,w_3\}$, whence $x\in J$. Finally we have
$[y,x]v_r=w_3$, which implies $J\nsubseteq Z(L(f))$.

The general case follows easily from this one.
\end{proof}

\begin{lemma}\label{oca2}
Suppose that the Gram matrix of $f$ relative to a basis $B$ of $V$
is $T=I_r \oplus A_2(1)$. Then $L(f)$ is not reductive.
\end{lemma}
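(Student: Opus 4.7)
The plan is to produce an $L(f)$-invariant flag from the asymmetry of $f$, use it to build a solvable ideal $J$ of $L(f)$, and then exhibit $x\in J$ and $y\in L(f)$ with $[y,x]\neq 0$, along the lines of the preceding lemmas in this section.

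First, set $V_1 = \langle e_1,\dots,e_r\rangle$ and $V_2 = \langle u_1,u_2,v_1,v_2\rangle$, so that $V = V_1\perp V_2$ corresponds to $T = I_r\oplus A_2(1)$, and compute the asymmetry $\sigma = T^{-1}T'$. On $V_1$ it is the identity; on $V_2$ a direct block computation (using $A_2(1)^{-1}A_2(1)' = J_2(1)^{-1}\oplus J_2(1)'$) gives $(\sigma - 1_V)u_1 = -u_2$, $(\sigma - 1_V)v_2 = v_1$, and $u_2,v_1\mapsto 0$. Set $W = \ker(\sigma - 1_V) = V_1\oplus\langle u_2,v_1\rangle$ and $W' = \mathrm{Im}(\sigma - 1_V) = \langle u_2,v_1\rangle$; by Lemma~\ref{lem21a} both are $L(f)$-invariant, yielding the flag $0\subsetneq W'\subsetneq W\subsetneq V$.

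Next, define
$$
J = \{z\in L(f)\,|\,zV\subseteq W\text{ and }zW\subseteq W'\}.
$$
The $L(f)$-invariance of $W$ and $W'$ makes $J$ an ideal of $L(f)$. A block-by-block analysis of $X'T + TX = 0$ with respect to the partition $V = V_1\oplus V_2$ will show that the condition $zV\subseteq W$ already forces $z\vert_{W'} = 0$ for $z\in L(f)$; consequently any threefold composition $z_1z_2z_3$ with $z_i\in J$ factors as $V\to W\to W'\to 0$ and vanishes. In particular $[[J,J],J] = 0$, so $J$ is $2$-step nilpotent and hence solvable.

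Finally, assuming $r\geq 1$, I would define $x,y\in\mathrm{End}(V)$ by
$$
x(e_1) = u_2,\ x(v_2) = -e_1,\quad y(e_1) = v_1,\ y(u_1) = -e_1,
$$
sending every other basis vector to $0$. A short case check of $f(zv,w)+f(v,zw) = 0$ on basis pairs places $x,y$ in $L(f)$, and $x\in J$ holds by construction. A direct computation then gives
$$
[y,x](u_1) = y(0) - x(-e_1) = u_2 \neq 0,
$$
so $x$ is not central in $L(f)$. Thus the solvable ideal $J$ is not contained in $Z(L(f))$, and $L(f)$ fails to be reductive. The main obstacle is the structural point in the third paragraph: unlike in earlier lemmas, the naive abelian candidate $\{z\in L(f): zV\subseteq W,\ zW = 0\}$ collapses to a one-dimensional central subspace here, forcing us to pass to the larger $2$-step nilpotent $J$ governed by the full flag $W'\subsetneq W\subsetneq V$.
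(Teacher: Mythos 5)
Your proof is correct, and every concrete claim in it checks out: with the stated conventions one indeed gets $(\sigma-1_V)u_1=-u_2$, $(\sigma-1_V)v_2=v_1$; the maps $x,y$ satisfy $f(zv,w)+f(v,zw)=0$ on all basis pairs; $x\in J$; and $[y,x]u_1=u_2\neq 0$. The one step you leave as a promissory note, namely that $zV\subseteq W$ forces $zW'=0$, is true and needs no block analysis at all: since $z$ commutes with $\sigma$ by Lemma \ref{lem21a}, we have $zW'=z(\sigma-1_V)V=(\sigma-1_V)zV\subseteq(\sigma-1_V)W=0$. Your route differs from the paper's in execution rather than in substance. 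The paper computes $L(T)$ explicitly in block form and reads off a $(2r+1)$-dimensional ideal (the span of the parameters $a_i,b_i,e$ in its notation) whose derived algebra is one-dimensional, hence solvable, and which is visibly non-central; your $J=\{z\in L(f):\ zV\subseteq W,\ zW\subseteq W'\}$ turns out to be exactly that same ideal, but you obtain it and its solvability structurally from the $L(f)$-invariant flag $\mathrm{Im}(\sigma-1_V)\subsetneq\ker(\sigma-1_V)\subsetneq V$, which spares you the full computation of $L(T)$ at the cost of verifying two explicit elements of $L(f)$ by hand. Your closing observation is also accurate: the ``naive'' candidate $\{z\in L(f):\ zV\subseteq W,\ zW=0\}$ that works in Lemma \ref{oca1} really does collapse to a one-dimensional central subspace here, which is precisely why both you and the paper must pass to a larger, merely solvable, ideal.
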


\begin{proof}
A computation reveals that the elements of $L(T)$ are those of the
form
$$
\left( \begin{array}{c|c}
C & \begin{matrix}
-b_1   & 0      &0      & -a_1 \\
-b_2   & 0      &0      & -a_2\\
\vdots & \vdots &\vdots & \vdots\\
-b_r   & 0      &0      & -a_r
\end{matrix} \\\hline
\begin{matrix}
0 & 0 & \cdots & 0\\
a_1 & a_2 & \cdots & a_r \\
b_1 & b_2 & \cdots & b_r \\
0 & 0 & \cdots & 0
\end{matrix} & \begin{matrix}
c   & 0 & 0 & -2g   \\
e   & c & 2g& g     \\
h   & 2h& -c& -e    \\
-2h& 0  & 0 & -c
\end{matrix}
\end{array} \right)
$$
with $C\in M_r(F)$ skew-symmetric. Then
$$
J = \left\lbrace \left( \begin{array}{c|c}
0 & \begin{matrix}
-b_1   & 0      &0      & -a_1 \\
-b_2   & 0      &0      & -a_2\\
\vdots & \vdots &\vdots & \vdots\\
-b_r   & 0      &0      & -a_r
\end{matrix} \\\hline
\begin{matrix}
0 & 0 & \cdots & 0\\
a_1 & a_2 & \cdots & a_r \\
b_1 & b_2 & \cdots & b_r \\
0 & 0 & \cdots & 0
\end{matrix} & \begin{matrix}
0   & 0 & 0 & 0 \\
e   & 0 & 0 & 0     \\
0   & 0 & 0 & -e    \\
0   & 0 & 0 & 0
\end{matrix}
\end{array} \right) :a_i,b_i,e\in F \right\rbrace
$$
is a non-central ideal of $L(T)$. Since $[J,J]$ is one
dimensional, $J$ is solvable.
\end{proof}

\begin{lemma}\label{oca3}
Suppose that the Gram matrix of $f$ relative to a basis $B$ of $V$
is $T=I_r\oplus \Gamma_s$ with $s>1$ odd. Then $L(f)$ is reductive
if and only if $r=1$, in which case $L(f)$ is abelian of dimension
$(s+1)/2$.
\end{lemma}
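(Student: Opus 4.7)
The approach is to describe $L(T)$ in block form relative to a basis $\{u_1,\dots,u_r,w_1,\dots,w_s\}$ adapted to $T=I_r\oplus\Gamma_s$, and then, for $r\ge 2$, to exhibit a non-central abelian ideal. Parameterize $X\in\gl(V)$ as $X=\bigl(\begin{smallmatrix}C&P\\Q&D\end{smallmatrix}\bigr)$ with $C\in\gl(r)$, $D\in\gl(s)$, $P\in M_{r,s}(F)$, $Q\in M_{s,r}(F)$. A direct computation of $X'T+TX=0$ shows that $X\in L(T)$ iff $C\in\so(r)$, $D\in L(\Gamma_s)$, and the coupled conditions $P=-Q'\Gamma_s$ and $P'=-\Gamma_s Q$ hold. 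Eliminating $P$ gives $(\Gamma_s-\Gamma_s')Q=0$, which rewrites as $(\sigma-I)Q=0$ via $\Gamma_s'=\Gamma_s\sigma$, where $\sigma$ is the asymmetry of $\Gamma_s$. By Lemma~\ref{lega}, $\sigma$ is cyclic with minimal polynomial $(X-1)^s$, so $\ker(\sigma-I)=Fe_1$ is one-dimensional. Hence $Q=e_1 v'$ for some $v\in F^r$, and since the first row of $\Gamma_s$ is $(0,\dots,0,1)$ (as $s$ is odd), $P=-v e_s'$. In particular $\dim L(T)=\binom{r}{2}+r+(s-1)/2$.

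For $r=1$, $C=0$ is forced and $L(T)$ is parameterized by a scalar $v\in F$ together with $D\in L(\Gamma_s)$. Direct block multiplication shows $[X_1,X_2]=0$: the cross blocks $P_1Q_2$, $P_1D_2$, $D_1Q_2$ all vanish because $e_s'e_1=0$ (as $s>1$), and the last row and first column of any $D\in L(\Gamma_s)$ are zero (by Lemma~\ref{lega}, $D$ is strictly upper triangular); the remaining bottom-right block $Q_1P_2+D_1D_2$ is symmetric in $(1,2)$ since $L(\Gamma_s)$ is abelian and $Q_iP_j$ depends symmetrically on $v_iv_j$. Hence $L(T)$ is abelian of dimension $(s+1)/2$, in particular reductive.

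For $r\ge 2$, define $J=\{X\in L(T):C=0,\ D=0\}$, an $r$-dimensional subspace parameterized by $v\in F^r$. The same vanishing identities show $J$ is abelian. For $X\in J$ of parameter $v$ and $Y\in L(T)$ with blocks $C_Y\in\so(r)$, $D_Y\in L(\Gamma_s)$ and parameter $v_Y$, a parallel block computation yields $[X,Y]\in J$ with new parameter $-C_Y v$ (the key cancellations being exactly the three identities used above), so $J$ is an ideal. Since $r\ge 2$, we may choose a non-zero $C_Y\in\so(r)$ and $v\in F^r$ with $C_Y v\ne 0$, whence $[X,Y]\ne 0$. Thus $J$ is a non-central solvable ideal of $L(f)$, which is therefore not reductive.

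The main obstacle is organizing the block manipulations so that the three key identities --- $e_s'e_1=0$, the vanishing last row of any $D\in L(\Gamma_s)$, and its vanishing first column --- are applied at the right moments. Once these are in hand, every mixed term collapses exactly as needed to yield both the abelian structure for $r=1$ and the ideal property of $J$ for $r\ge 2$.
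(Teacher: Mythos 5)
Your proof is correct and follows essentially the same route as the paper: both compute the explicit block form of $L(I_r\oplus\Gamma_s)$ (with $C\in\so(r)$, $D\in L(\Gamma_s)$, and coupling blocks supported on a single column of $P$ and a single row of $Q$) and then exhibit a non-central abelian ideal built from those coupling blocks when $r>1$. Your derivation of the shape of $Q$ via $\ker(\sigma-1_V)=Fe_1$ is a cleaner justification of the block form that the paper merely displays, and your argument explicitly covers $r=2$, a case the paper's phrasing (``if $r\ge 3$'') glosses over even though the same reasoning applies there.
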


\begin{proof}
The matrices of $L(T)$ are those of the form
$$
\left( \begin{array}{c|c}
C & \begin{matrix}
0       & \cdots & 0        & a_1\\
\vdots  & \ddots & \vdots   & \vdots\\
0       & \cdots & 0        & a_r
\end{matrix}\\\hline
\begin{matrix}
-a_1    & \cdots & -a_r \\
0       & \cdots & 0 \\
\vdots  & \ddots & \vdots\\
0       & \cdots & 0
\end{matrix} & D
\end{array} \right),
$$
with $C\in L(I_r)$ (an orthogonal Lie algebra) and $D\in
L(\Gamma_s)$. In fact, $L(T)\cong L(I_r)\ltimes J$, where $J$ is
the abelian ideal given by
$$
J=\left\lbrace \left( \begin{array}{c|c}
C & P\\\hline
Q & D
\end{array} \right) \in L(T): C = 0 \right\rbrace.
$$

If $r=1$ then $L(I_r)=0$ and $L(f)$ is abelian. If $r\ge 3$, the
action of $L(I_r)$ on $J$ is not trivial, so $L(f)$ is not
reductive in this case.
\end{proof}

By combining the above results with those of \S\ref{secuno} as
well as Theorem \ref{sergei} we obtain

\begin{prop}\label{sumario1} Suppose that $\ell\neq 2$ and $f$ is non-degenerate with unipotent
asymmetry $\si$. Then $L(f)$ is absolutely reductive if and only if $\si$
cyclic, in which case $L(f)$ is abelian, or $\si=1_V$,
in which case $f$ is symmetric, or $\si$ has elementary divisors
$(X-1)^2,(X-1)^2$, in which case $L(f)\cong\gl(2)$, or $\si$ has elementary divisors
$(X-1)^{2m+1},(X-1)$ with $m\geq 1$, in which case $L(f)$ is abelian.
%In particular, $L(f)/Z(L(f))$ is
%simple if and only if $f$ is symmetric and $\dm(V)>2$, in which case $L(f)$ simple.
\end{prop}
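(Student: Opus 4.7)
The strategy is to combine the classification of indecomposable non-degenerate bilinear forms from Theorem \ref{sergei} with the case-by-case obstructions to reductivity established in this section and in Sections~\ref{secuno} and \ref{secmenosuno}. Since $\si$ is unipotent, the only indecomposable components of $f$ whose asymmetry is unipotent are, up to equivalence, those of type $\Gamma_n$ with $n$ odd (each contributing a single elementary divisor $(X-1)^n$) and those of type $A_n(1)$ with $n$ even (each contributing two elementary divisors $(X-1)^n$). Hence $f$ is equivalent to a direct sum of such pieces, and the elementary divisor data of $\si$ is determined by their multiplicities.

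For sufficiency in each of the four listed cases I would argue directly. If $\si$ is cyclic there is a single elementary divisor, so $f$ is equivalent to a single $\Gamma_n$ with $n$ odd, and Lemma \ref{lega} gives $L(f)$ abelian. If $\si=1_V$ then every elementary divisor is $(X-1)$, forcing all pieces to be $\Gamma_1$, so $f$ is symmetric and $L(f)$ is the classical orthogonal Lie algebra, which is absolutely reductive in characteristic not~$2$. If the elementary divisors are $(X-1)^2,(X-1)^2$ then $f$ is equivalent to $A_2(1)$, and Proposition \ref{x1} with $n=2$ yields $L(f)\cong\gl(2)$. Finally, if the elementary divisors are $(X-1)^{2m+1},(X-1)$ with $m\geq 1$, then $f$ is equivalent to $(1)\oplus\Gamma_{2m+1}$, and Lemma \ref{oca3} with $r=1$, $s=2m+1$ gives $L(f)$ abelian.

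For the necessity I would systematically rule out every other configuration of indecomposable summands using the tools already at hand: any $A_n(1)$ with $n>2$ even is excluded by Corollary \ref{xcor}; two or more copies of $A_2(1)$ by Corollary \ref{casoextra}; two summands $\Gamma_r,\Gamma_s$ with $r,s>1$ (both necessarily odd, hence of the same parity) by the lemma preceding Lemma \ref{oca1}; one $\Gamma_r$ with $r\geq 3$ together with any $A_2(1)$ by Lemma \ref{oca1}; $I_r$ with $r\geq 1$ together with an $A_2(1)$ summand by Lemma \ref{oca2}; and $I_r$ with $r\geq 2$ together with any $\Gamma_s$, $s>1$ odd, by Lemma \ref{oca3}. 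Enumerating the configurations of summands that survive all of these forbidden patterns shows that they are precisely the four listed in the statement.

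The main obstacle is bookkeeping rather than any delicate new argument: one must verify that the six forbidden patterns jointly carve out exactly the admissible list. A minor subtlety worth recording is that ``not reductive over $F$'' implies ``not absolutely reductive'', since a non-central solvable ideal $J$ of $L(f)$ gives rise to the non-central solvable ideal $J\otimes K$ of $L(f)\otimes K$; conversely each of the four admissible $L(f)$ (abelian, $\gl(2)$, or an orthogonal Lie algebra in characteristic not $2$) manifestly stays reductive after base change to $K$, yielding the ``absolutely'' in the conclusion.
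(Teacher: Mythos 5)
Your proposal is correct and follows essentially the same route as the paper: the paper's own (very terse) proof is precisely "combine Theorem \ref{sergei} with the obstructions of \S\ref{secuno} and the lemmas of this section," plus the Bourbaki citation for the symmetric case, and your enumeration of the surviving configurations of $\Gamma_n$ ($n$ odd) and $A_2(1)$ summands fills in exactly the bookkeeping the paper leaves implicit. Your closing remark on why failure of reductivity over $F$ persists after base change (a non-central solvable ideal stays non-central and solvable under $\otimes K$) is a worthwhile explicit addition, but not a departure from the paper's argument.
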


\begin{proof} It only remains to see that $L(f)$ is absolutely
reductive when $f$ is symmetric. This follows from \cite{B},
Chapter 1, \S 6, Exercise 26. See \cite{CS}, \S 8 and \S9, for
full details.
\end{proof}

\begin{lemma}
If $f$ admits $T=A_1(-1)\oplus\cdots\oplus A_1(-1)\oplus\Gamma_r$
as Gram matrix for some even $r$ then $L(f)$ is not reductive.
\end{lemma}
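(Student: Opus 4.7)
My plan is to exhibit a non-central two-step solvable ideal of $L(f)$. By the usual reduction (extending by zero on the additional summands), it suffices to treat the case where $f$ has Gram matrix $A_1(-1)\oplus\Gamma_r$ on a basis $\{e_1,e_2,v_1,\ldots,v_r\}$; write $V_0=\mathrm{span}\{e_1,e_2\}$ and $V_2=\mathrm{span}\{v_1,\ldots,v_r\}$. By Theorem \ref{sergei} and Lemma \ref{lega}, $\sigma=-1_V$ on $V_0$ while $\sigma|_{V_2}$ is a single Jordan block with eigenvalue $-1$, so $\sigma+1_V$ is nilpotent of index $r$.

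Set $U=\ker(\sigma+1_V)=V_0\oplus Fv_1$ and $W=\mathrm{im}(\sigma+1_V)^{r-1}=Fv_1$. Both are $L(f)$-invariant by Lemma \ref{lem21a}, so
$$J=\{x\in L(f):xV\subseteq U\text{ and }xU\subseteq W\}$$
is an ideal of $L(f)$. The key point is that every $x\in J$ satisfies $xv_1=0$: since $xU\subseteq W$ one has $xv_1=c\,v_1$, and then skew-adjointness applied to $u=v_1$, $w=v_r$ gives $c\cdot f(v_1,v_r)+f(v_1,xv_r)=0$. The second term vanishes because $xv_r\in U$ and $f(v_1,U)=0$ (using $V_0\perp V_2$ and $f(v_1,v_1)=0$), while $f(v_1,v_r)=-1$ for $\Gamma_r$ with $r$ even, forcing $c=0$. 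From $xU\subseteq W\subseteq U$ we obtain $[J,J]\cdot V\subseteq W$, and since $J$ annihilates $W$, this yields $[J,J]\cdot[J,J]=0$. Thus $J$ is two-step solvable.

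To show $J$ is not central, define $x\in\gl(V)$ by $e_1\mapsto v_1$, $v_r\mapsto e_2$, and zero on the remaining basis vectors; the skew-adjointness identity $f(e_1,e_2)+f(v_1,v_r)=1+(-1)=0$, together with the fact that all other relevant pairings reduce to entries of $\Gamma_r$ or $A_1(-1)$ that vanish, shows $x\in L(f)$, and by construction $x\in J$. Let $h\in L(f)$ be the $\mathfrak{sp}(V_0)$-Cartan element extended by zero, so $he_1=e_1$, $he_2=-e_2$, $h|_{V_2}=0$. Then $[h,x]e_1=hv_1-xe_1=-v_1\ne 0$, whence $J\not\subseteq Z(L(f))$ and $L(f)$ is not reductive.

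The delicate step is the argument that $xv_1=0$ for every $x\in J$: this is what converts the inclusion $[J,J]V\subseteq W$ into the outright product vanishing $[J,J]\cdot[J,J]=0$ required for solvability. Everything else reduces to routine skew-adjointness checks, using the specific entries of $\Gamma_r$ that are dictated by $r$ being even.
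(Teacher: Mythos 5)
Your proof is correct, and it takes a genuinely different route from the paper's. The paper reorders the basis so that the $A_1(-1)$ summands become the standard symplectic form, computes the full matrix shape of $L(T)$ for $T=\left(\begin{smallmatrix}0&I_s\\-I_s&0\end{smallmatrix}\right)\oplus\Gamma_r$, identifies $L(f)\cong\sy(2s)\ltimes J$ with $J$ the ideal of elements vanishing on the symplectic block, checks that $[J,J]$ is one-dimensional (hence $J$ solvable), and observes that the $\sy(2s)$-action on $J$ is nontrivial. You instead build the ideal intrinsically from the $L(f)$-invariant subspaces $U=\ker(\si+1_V)$ and $W=\mathrm{im}(\si+1_V)^{r-1}$, prove solvability by the pointwise observation that every element of your $J$ annihilates $v_1$ (using $f(v_1,v_r)=-1$, which is where the parity of $r$ enters), and exhibit two explicit witnesses $x,h$ to non-centrality; this avoids computing $L(T)$ altogether and is closer in spirit to the paper's treatment of $\Gamma_r\oplus\Gamma_s$ and of Lemma \ref{oca1}. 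All your computations check out (in particular the key identity $f(e_1,e_2)+f(v_1,v_r)=1-1=0$ certifying $x\in L(f)$, and the fact that $\ker(\si|_{V_2}+1)=Fv_1$ via Lemma \ref{lega}, which uses the standing hypothesis $\ell\neq 2$). One small remark on your opening reduction: passing to a single $A_1(-1)$ summand is not automatic in general, since a non-central solvable ideal of a subalgebra need not extend to one of the whole algebra; but here it is harmless because your $U$, $W$, and hence $J$ are defined via $\si$ and therefore make sense, and remain a non-central solvable ideal, for the full form with any number of $A_1(-1)$ summands --- the identical computation goes through since $f(v_1,V_0)=0$ regardless of $\dim V_0$. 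The paper sidesteps this by treating all $s$ copies at once. What your approach buys is a shorter, coordinate-light argument; what the paper's buys is the explicit structure $L(f)\cong\sy(2s)\ltimes J$, which is more information than mere non-reductivity.
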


\begin{proof}
By reordering basis vectors, we obtain a basis $B$ of $V$ relative
to which $f$ has Gram matrix
$$
\left( \begin{matrix}
0 & I_s\\
-I_s & 0
\end{matrix} \right) \oplus \Gamma_r.
$$

A few calculations show that the elements of $L(T)$ are those of
the form
$$
\left( \begin{array}{c|c}
C & \begin{matrix}
0       & \cdots & 0        & -b_1\\
\vdots  & \ddots & \vdots   & \vdots\\
0       & \cdots & 0        & -b_s\\
0       & \cdots & 0        & a_1\\
\vdots  & \ddots & \vdots   & \vdots\\
0       & \cdots & 0        & a_r
\end{matrix}\\\hline
\begin{matrix}
a_1     & \cdots & a_r      & b_1   & \cdots & b_r \\
0       & \cdots & 0        & 0     & \cdots & 0\\
\vdots  & \ddots & \vdots   & \vdots& \ddots & \vdots \\
0       & \cdots & 0        & 0     & \cdots & 0
\end{matrix} & D
\end{array} \right),
$$
where $C\in\mathfrak{sp}(2s)$ and $D\in L(\Gamma_r)$. Then
$L(f)\simeq \mathfrak{sp}(2s)\ltimes J$, where
$$
J = \left\lbrace \left( \begin{array}{c|c}
C & P\\\hline
Q & D
\end{array} \right) \in L(T) : C=0 \right\rbrace\unlhd L(T).
$$

This ideal is solvable because $[J,J]$ is one-dimensional.
However, the action of $\mathfrak{sp}(2s)$ on $J$ is not trivial.
Therefore $L(f)$ is not reductive.
\end{proof}

Much as above, we may now derive the following

\begin{prop}\label{sumario2} Suppose that $\ell\neq 2$ and $f$ is non-degenerate with asymmetry
$\si$, where $\si+1_V$ is nilpotent. Then $L(f)$ is absolutely reductive if
and only if $\si$ cyclic, in which case $L(f)$ is abelian, or $\si=-1_V$,
in which case $f$ is skew-symmetric.% and $L(f)$ is simple.
\end{prop}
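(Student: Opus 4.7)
The plan is to mirror the proof of Proposition \ref{sumario1}, combining Theorem \ref{sergei} with the non-reductivity results assembled in this section and in \S\ref{secmenosuno}. First I would decompose $f$ into a perpendicular sum of indecomposable components. Since $\si+1_V$ is nilpotent, every elementary divisor of $\si$ has the form $(X+1)^k$, so by Theorem \ref{sergei} each component is equivalent either to $A_n(-1)$ with $n$ odd (contributing two blocks $(X+1)^n,(X+1)^n$) or to $\Gamma_m$ with $m$ even, hence $m\geq 2$ (contributing one block $(X+1)^m$).

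For the forward direction, assuming $L(f)$ is absolutely reductive and hence reductive, I would eliminate the forbidden components in turn. Corollary \ref{xcor2} rules out any $A_n(-1)$ summand with $n>1$, leaving at most copies of $A_1(-1)$. The unnamed lemma following Lemma \ref{lega} (on two $\Gamma_r,\Gamma_s$ summands with $r,s>1$ of the same parity) then rules out having two or more $\Gamma$ summands, since here all admissible $m_j$ are even and $\geq 2$. Finally, the lemma immediately preceding this proposition rules out combining any $A_1(-1)$ with a $\Gamma_r$ summand of even $r$. The only surviving configurations are therefore $f$ equivalent to $A_1(-1)\perp\cdots\perp A_1(-1)$, in which case $\si=-1_V$ and $f$ is skew-symmetric, or $f$ equivalent to a single $\Gamma_n$ with $n$ even, in which case $\si$ has the single elementary divisor $(X+1)^n$ and so is cyclic.

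For the converse, if $f$ is equivalent to $\Gamma_n$ then Lemma \ref{lega} says $L(f)$ is abelian, which is trivially absolutely reductive. If instead $\si=-1_V$, then $f$ is non-degenerate and alternating, so $L(f)\cong\sy(2s)$ with $2s=\dm V$, and absolute reductivity follows from the reference already invoked in Proposition \ref{sumario1}, namely \cite{B}, Chapter 1, \S6, Exercise 26.

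The main obstacle is purely bookkeeping: one must verify that the non-reductivity lemmas collected above truly exhaust every configuration outside the short list in the statement. Once the classification of indecomposable summands from Theorem \ref{sergei} is in hand, each exclusion cites a previously established result, and no new calculation is required.
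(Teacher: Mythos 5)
Your argument is correct and is essentially the paper's own proof made explicit: the paper simply says the result follows ``much as above'' from Theorem \ref{sergei}, Corollary \ref{xcor2}, Lemma \ref{lega} and the two unnamed lemmas of this section, and then verifies the skew-symmetric case via Bourbaki --- exactly the reduction and bookkeeping you carry out. The only nit is the citation for that last case: the symplectic algebra is \cite{B}, Chapter 1, \S 6, Exercise 25, whereas Exercise 26 is the orthogonal case used in Proposition \ref{sumario1}.
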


\begin{proof} It only remains to see that $L(f)$ is absolutely
reductive when $f$ is skew-symmetric. This follows from \cite{B},
Chapter 1, \S 6, Exercise 25. See \cite{CS}, \S 10, for full
details.
\end{proof}

As a consequence of Propositions  \ref{jodd3}, \ref{p23},
\ref{zxc}, \ref{sumario1} and \ref{sumario2}, we obtain

\begin{theorem}\label{todjur} Suppose $F$ is algebraically closed
of characteristic not 2. Then $L(f)$ is reductive if and only if
either $f=0$, in which case $L(f)=\gl(V)$, or else $f$ admits as
Gram matrix a direct sum of matrices of the following types, in
which case $L(f)$ is isomorphic to the direct sum of the Lie
algebras associated to these matrix summands, as indicated below.
Moreover, in this case, at last one summand occurs; there exists
at most one summand of each type; if a summand of type $\lambda$
occurs, where $\lambda\in F$ and $\lambda\neq \pm 1$, then no
summand of type $\lambda^{-1}$ exists.

\noindent{\sc Type 0.}

$A=\left(%
\begin{array}{cc}
  0 & J_m \\
  I_m & 0 \\
\end{array}%
\right)$, $L(A)$ abelian of dimension $m$;

$B=\left(%
\begin{array}{cc}
  0 & 0 \\
  I_m & 0 \\
\end{array}%
\right)$, $L(B)\cong\gl(m)$, $m>1$.

\noindent{\sc Type $\lambda$, where $\lambda\in F$ and
$\lambda\neq \pm 1$.}

$A=\left(%
\begin{array}{cc}
  0 & J_m(\lambda) \\
  I_m & 0 \\
\end{array}%
\right)$, $L(A)$ abelian of dimension $m$;

$B=\left(%
\begin{array}{cc}
  0 & \lambda I_m \\
  I_m & 0 \\
\end{array}%
\right)$, $L(B)\cong\gl(m)$, $m>1$.

\noindent{\sc Type 1.}

$A=\Gamma_m$, $m$ odd, $L(A)$ abelian of dimension $(m-1)/2$;

$B=I_m$, $L(B)\cong\so(m)$, $m>2$;

$C=\left(%
\begin{array}{cc}
  0 & I_2 \\
  J_2(1) & 0 \\
\end{array}%
\right)$, $L(C)\cong\gl(2)$;

$D=\Gamma_m\oplus \Gamma_1$, $m$ odd, $L(A)$ abelian of dimension
$(m+1)/2$.

\noindent{\sc Type $-1$.}

$A=\Gamma_m$, $m$ even, $L(A)$ abelian of dimension $m/2$;

$B=\left(%
\begin{array}{cc}
  0 & I_m \\
  -I_m & 0 \\
\end{array}%
\right)$, $L(B)\cong\sy(2m)$.

\end{theorem}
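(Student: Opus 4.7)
The plan is to translate the combined content of Propositions \ref{jodd3}, \ref{p23}, \ref{zxc}, \ref{sumario1} and \ref{sumario2}, each of which states reductivity in structural terms (cyclic/semisimple asymmetry, single indecomposable component, etc.), into the explicit Gram matrix list via Theorem \ref{sergei}. The forward direction is the substantive one. Assuming $L(f)$ is reductive and $f\neq 0$, I would begin by applying Proposition \ref{jodd3} to conclude that $V_{\mathrm{odd}}=0$ and $L(f)\cong L(f_{\mathrm{even}})\oplus L(f_{\mathrm{ndeg}})$ with both summands reductive. This immediately splits the remaining argument into an analysis of the even part and the non-degenerate part.

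For the even part, Proposition \ref{p23} leaves exactly two possibilities: a single indecomposable $J_{2m}$-block, which after the basis reordering of Corollary \ref{3cr} becomes the Type 0/A matrix with $L$ abelian of dimension $m$; or $m>1$ copies of $J_2$, which upon reordering becomes the Type 0/B matrix with $L\cong\gl(m)$. This accounts for both Type 0 entries.

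For the non-degenerate part, I would use the orthogonal primary decomposition $V_{\mathrm{ndeg}}=\bigoplus_i(V_{p_i}\oplus V_{p_i^*})\perp\bigoplus_j V_{q_j}$ from Section 6. Since $F$ is algebraically closed, every irreducible factor of $p_\sigma$ is linear: $p=X-\lambda$ with $p^*$ a scalar multiple of $X-\lambda^{-1}$, so the self-paired factors $q=\pm q^*$ are exactly $X\mp 1$. I would then split into three regimes $\lambda\neq\pm 1$, $\lambda=1$, $\lambda=-1$. For $\lambda\neq\pm 1$, Proposition \ref{zxc} forces $\sigma|_{V_{X-\lambda}}$ either cyclic---giving a single elementary divisor $(X-\lambda)^m$, which by Theorem \ref{sergei} produces the indecomposable $A_m(\lambda)$ (Type $\lambda$/A, $L$ abelian of dim $m$)---or semisimple with $r>1$ copies of $X-\lambda$---giving $r$ blocks $A_1(\lambda)$ that reassemble (after reordering) into the Type $\lambda$/B matrix with $L\cong\gl(r)$. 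For $\lambda=1$, Proposition \ref{sumario1} lists four reductive shapes for $\sigma$, which I would match to the four Type 1 entries via Theorem \ref{sergei}: cyclic $\sigma$ gives $\Gamma_m$ with $m$ odd (A); $\sigma=1_V$, i.e.\ $f$ symmetric non-degenerate, diagonalizes to $I_m$ with $L\cong\so(m)$ for $m>2$ (B); elementary divisors $(X-1)^2,(X-1)^2$ give the $A_2(1)$-block with $L\cong\gl(2)$ (C); elementary divisors $(X-1)^{2m+1},(X-1)$ give $\Gamma_{2m+1}\oplus\Gamma_1$ (D). The $\lambda=-1$ case is parallel via Proposition \ref{sumario2}, producing Type $-1$/A ($\Gamma_m$, $m$ even) and Type $-1$/B (the standard symplectic form, with $L\cong\sy(2m)$).

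The converse is immediate since each summand on the list gives a reductive $L$ (abelian, $\gl$, $\so$, or $\sy$) and direct sums of reductive Lie algebras are reductive. The uniqueness clauses---at most one summand of each type, and mutual exclusion of $\lambda$ with $\lambda^{-1}$---follow from the uniqueness part of Theorem \ref{sergei} together with the dichotomy inside each proposition (only one of cyclic/semisimple can hold on a given primary component). The anticipated obstacle is not conceptual but bookkeeping: keeping clear which structural description in Propositions \ref{zxc}, \ref{sumario1}, \ref{sumario2} corresponds to which explicit representative of Theorem \ref{sergei}, and correctly identifying the resulting isomorphism type $\gl(m)$, $\so(m)$ or $\sy(2m)$ for each summand; the deeper (absolute) reductivity of $\so(m)$ and $\sy(2m)$ is already absorbed into the statements of Propositions \ref{sumario1} and \ref{sumario2}, so no additional classical input is required here.
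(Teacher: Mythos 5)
Your proposal is correct and follows essentially the same route as the paper, which states Theorem \ref{todjur} precisely as a consequence of Propositions \ref{jodd3}, \ref{p23}, \ref{zxc}, \ref{sumario1} and \ref{sumario2}, matched against the representatives of Theorem \ref{sergei}; your write-up simply makes explicit the bookkeeping the paper leaves to the reader.
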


\section{Simplicity and semisimplicity of $L(f)$}

\begin{theorem}
\label{fsemi} The Lie algebra $L(f)$ is semisimple if and only if
$\ell\neq 2$, $f$ is non-degenerate and $f=f_1\perp f_{-1}$, where
$f_1$ is symmetric, $\dm(V_1)\neq 2$ and $f_{-1}$ is
skew-symmetric.
\end{theorem}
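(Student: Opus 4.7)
The plan is to handle the two implications separately. For the ``if'' direction, once $\sigma$ is forced to act as $+1_{V_1}$ and $-1_{V_{-1}}$, the Section~6 decomposition collapses $L(f)$ to $\so(V_1)\oplus\sy(V_{-1})$, and each summand is standardly semisimple in the stated range. For the ``only if'' direction, the heart of the argument will be a single observation: $\sigma-\sigma^{-1}$ is always central in $L(f)$, so semisimplicity forces $\sigma^2=1_V$; the remaining obstructions are then cleared by Propositions~\ref{jodd3} and~\ref{p23}.

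For the easy direction, assume $\ell\neq 2$, $f$ non-degenerate, $f=f_1\perp f_{-1}$, $f_1$ symmetric with $\dm(V_1)\neq 2$, and $f_{-1}$ skew-symmetric. The asymmetry of $f$ is then $1_{V_1}\oplus(-1_{V_{-1}})$, and the primary decomposition of Section~6 gives $L(f)\cong L(f_1)\oplus L(f_{-1})\cong \so(V_1)\oplus\sy(V_{-1})$. In characteristic not $2$, $\sy(V_{-1})$ is zero or simple, and $\so(V_1)$ is semisimple provided $\dm(V_1)\neq 2$; the exceptional case $\dm(V_1)=2$ is precisely the one-dimensional abelian $\so(2)$. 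These classical facts are recorded in \cite{B}, Chapter~1, and \cite{CS}.

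For the converse, suppose $L(f)$ is semisimple. First I would rule out $\ell=2$: in characteristic $2$ one has $f(1_V v,w)+f(v,1_V w)=2f(v,w)=0$, so $1_V\in L(f)$, and $1_V$ is clearly a nonzero central element, contradicting $Z(L(f))=0$. Next, since $L(f)$ is reductive, I invoke Proposition~\ref{jodd3}; the possibility $f=0$ is ruled out because $\gl(V)$ has nonzero center, so $V_{\mathrm{odd}}=0$ and $L(f)\cong L(f_{\mathrm{even}})\oplus L(f_{\mathrm{ndeg}})$ with both summands semisimple. Proposition~\ref{p23} then shows $L(f_{\mathrm{even}})$ is abelian or isomorphic to some $\gl(m)$; both have nonzero center when nonzero, forcing $V_{\mathrm{even}}=0$, so $f$ is non-degenerate.

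The main step, and the only one requiring a new idea, is to show $\sigma^2=1_V$. By Lemma~\ref{lem22a} we have $\sigma-\sigma^{-1}\in L(f)$, and by Lemma~\ref{lem21a}, $\sigma$ commutes with every element of $L(f)$, hence so does $\sigma^{-1}$ and therefore $\sigma-\sigma^{-1}$. Thus $\sigma-\sigma^{-1}$ lies in $Z(L(f))=0$, giving $\sigma^2=1_V$. Since $\ell\neq 2$, $\sigma$ is diagonalizable with eigenvalues in $\{\pm 1\}$, and Lemma~\ref{lem83} yields $f=f_1\perp f_{-1}$ with $f_1$ symmetric and $f_{-1}$ skew-symmetric, whence $L(f)\cong\so(V_1)\oplus\sy(V_{-1})$. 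If $\dm(V_1)=2$, the one-dimensional abelian ideal $\so(V_1)$ would violate semisimplicity, so $\dm(V_1)\neq 2$. Beyond the centrality trick for $\sigma-\sigma^{-1}$, every step is a direct application of previously established results.
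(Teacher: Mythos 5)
Your proposal is correct and follows essentially the same route as the paper: eliminate $V_{\mathrm{odd}}$ and $V_{\mathrm{even}}$ via the reductivity results, use the centrality of $\sigma-\sigma^{-1}$ (Lemmas~\ref{lem21a} and~\ref{lem22a}) to force $\sigma^2=1_V$, rule out $\ell=2$ via $1_V\in Z(L(f))$, and conclude with Lemma~\ref{lem83}. The only differences are cosmetic: you invoke Propositions~\ref{jodd3} and~\ref{p23} directly where the paper works from Lemmas~\ref{jodd} and~\ref{joddunoymedio}, and you dispose of the characteristic first rather than last.
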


\begin{proof} If the stated conditions are satisfied then $L(f)$
is known to be semisimple  (see \cite{B}, Chapter 1, \S 6,
Exercises 25 and 26). Suppose conversely that $L(f)$ is
semisimple.

\noindent{\bf Step 1.} $V_{\mathrm{odd}}=0$.

Suppose, if possible, that  $V_{\mathrm{odd}}\neq 0$. Then
$V_{\mathrm{odd}}=\mathrm{Rad}(f)$ by Lemma \ref{jodd}, so $f=0$
by Lemma \ref{joddunoymedio}, whence $L(f)=\gl(V)$ is semisimple,
a contradiction.

\noindent{\bf Step 2.} $V_{\mathrm{even}}=0$.

As indicated in \S\ref{sectermnot}, we have $L(f)\cong
L(f_{\mathrm{even}})\oplus L(f_{\mathrm{ndeg}})$. Both summands
must be semisimple. We see from Proposition \ref{p23} that
$V_{\mathrm{even}}=0$.

\noindent{\bf Step 3.} The asymmetry $\sigma$ of $f$ satisfies
$\sigma=\sigma^{-1}$.

This is clear from the fact, shown in Lemmas \ref{lem21a} and
\ref{lem22a}, that $\sigma-\sigma^{-1}\in Z(L(f))$.

\noindent{\bf Step 4.} $\ell\neq 2$ and $f=f_1\perp f_{-1}$, where
$f_1$ is symmetric and $f_{-1}$ is skew-symmetric.

By above $\sigma$ satisfies the polynomial $X^2-1$. We have
$\ell\neq 2$, for otherwise $1_V\in Z(L(f))$. The conclusion now
follows at once from Lemma \ref{lem83}.

\noindent{\bf Step 5.} $\dm(V_1)\neq 2$.

If $\dm(V_1)=2$ then $L(f_1)$ is non-zero and abelian.
\end{proof}

Taking into account \cite{B}, Chapter 1, \S 6, Exercises 25 and
26, we may now derive the following

\begin{theorem}
\label{panda}
 The Lie algebra $L(f)$ is simple if and only if
$\ell\neq 2$, $f$ is non-degenerate and exactly  one of the
following conditions hold:

$\bullet$ $f$ is skew-symmetric;

$\bullet$ $f$ is symmetric, $\dm(V)>2$, and if $\dm(V)=4$ then the
discriminant of $f$ is not a square in $F$.

$\bullet$  $f=f_1\perp f_{-1}$, where $f_1$ is symmetric, $f_{-1}$
is skew-symmetric, $\dm(V_1)=1$ and $\dm(V_{-1})>0$.

\end{theorem}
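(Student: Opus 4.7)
The plan is to reduce to the semisimple classification already established in Theorem \ref{fsemi}, and then decide which of those semisimple Lie algebras are actually simple by inspecting the two direct summands individually.

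Since every simple Lie algebra is semisimple, Theorem \ref{fsemi} applies: we must have $\ell\neq 2$, $f$ non-degenerate, and $f=f_1\perp f_{-1}$ with $f_1$ symmetric on $V_1$, $f_{-1}$ skew-symmetric on $V_{-1}$, and $\dm(V_1)\neq 2$. The primary decomposition of $V$ under the asymmetry, together with Lemma \ref{lem83}, gives
$$
L(f)\;\cong\; L(f_1)\oplus L(f_{-1}),
$$
where $L(f_1)$ is the orthogonal Lie algebra of $(V_1,f_1)$ and $L(f_{-1})$ is the symplectic Lie algebra of $(V_{-1},f_{-1})$. For such a direct sum to be simple, exactly one summand must vanish and the other must be simple as a Lie algebra over $F$.

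Next I would invoke the classical facts (\cite{B}, Chapter 1, \S 6, Exercises 25 and 26) that, in characteristic not $2$: the symplectic algebra on a non-degenerate skew-symmetric space of positive dimension is always simple; the orthogonal algebra on a non-degenerate symmetric space $(V_1,f_1)$ is simple when $\dm(V_1)\geq 3$, with the single exception $\dm(V_1)=4$ and discriminant a square, in which case it splits as $\sl(2)\oplus\sl(2)$; it vanishes when $\dm(V_1)\leq 1$; and it is abelian when $\dm(V_1)=2$ (already excluded). I would then split into three cases. (i) If $V_1=0$, then $f$ is skew-symmetric and $L(f)$ is simple, giving the first bullet. (ii) If $V_{-1}=0$, then $f$ is symmetric, and the simplicity criterion together with $\dm(V_1)\neq 2$ collapses to $\dm(V_1)>2$ with the $D_2$ discriminant exclusion when $\dm(V_1)=4$, giving the second bullet. (iii) If both $V_1$ and $V_{-1}$ are non-zero, then simplicity of the direct sum forces one summand, necessarily $L(f_1)$, to vanish; since $\dm(V_1)\neq 2$ this means $\dm(V_1)=1$, while $\dm(V_{-1})>0$ is automatic. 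The remaining summand $L(f_{-1})$ is simple, yielding the third bullet.

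Conversely, in each of the three listed situations the Lie algebra $L(f)$ reduces to a single simple summand (a symplectic algebra, or an orthogonal algebra satisfying the stated restrictions), so simplicity follows directly from the same cited facts. The most delicate point, and the one that requires the most care, is the $\so(4)$ discriminant subtlety distinguishing the split $D_2$ from its non-split anisotropic form; this is the only place where the base field $F$ (rather than its algebraic closure) enters essentially into the criterion, and handling it correctly is what forces the explicit discriminant condition in the second bullet.
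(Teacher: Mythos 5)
Your proposal is correct and follows essentially the same route as the paper, which derives Theorem \ref{panda} directly from Theorem \ref{fsemi} together with the cited Bourbaki facts (Chapter 1, \S 6, Exercises 25 and 26) on the simplicity of orthogonal and symplectic Lie algebras; your case split on which of $L(f_1)$, $L(f_{-1})$ vanishes, and the $\dm(V_1)=4$ discriminant exclusion, is exactly the intended argument. The paper leaves this derivation implicit, so your write-up is in fact more explicit than the original.
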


\section{When is $L(f)$ isomorphic to some $\sl(m)$}

\begin{theorem}
\label{tip}
 Suppose $\ell\neq 2$ and let $m=\dm(V)$. Then $\sl(n)$
isomorphic to $L(f)$ only when $f$ is non-degenerate and one of following cases occurs: $n=2$, $m=2$ and $f$ is  skew-symmetric;
$n=2$, $m=3$ and $f$ is a suitable symmetric bilinear form;
$n=4$, $m=6$ and $f$ is a suitable symmetric bilinear form; $n=2$, $f=f_1\perp f_{-1}$ with $f_1$ symmetric,
$f_{-1}$ skew-symmetric, $\dm(V_1)=1$ and $\dm(V_{-1})=2$.
\end{theorem}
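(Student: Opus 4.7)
The plan is to establish necessity by a three-step reduction: first show that $L(f)\cong\sl(n)$ forces $f$ to be non-degenerate, then decompose $V\otimes \bar F$ via the primary components of the asymmetry and apply Theorem~\ref{todjur}, and finally match dimensions using the low-dimensional coincidences between simple Lie algebras.

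The structural pivot is that $\sl(n)$ is indecomposable as a Lie algebra for every $n\geq 2$ with $\ell\neq 2$: it is perfect in this range, and its center $Z$ has dimension at most one, so any nontrivial ideal decomposition $\sl(n)=A\oplus B$ would force $Z$ into a single summand, say $A$, making $B$ a Lie complement to $Z$; then $\sl(n)=[\sl(n),\sl(n)]=[B,B]\subseteq B$ would contradict $Z\cap B=0$. This indecomposability yields the reduction to the non-degenerate case: if $f=0$ then $L(f)=\gl(m)$ and $m^2=n^2-1$ has no integer solutions, while for $f\neq 0$ degenerate, Proposition~\ref{jodd3} produces the decomposition $L(f)\cong L(f_{\mathrm{even}})\oplus L(f_{\mathrm{ndeg}})$, and Proposition~\ref{p23} shows $L(f_{\mathrm{even}})$ is abelian or isomorphic to $\gl(m)$, neither of which matches $\sl(n)$ for $n\geq 2$; combined with indecomposability this forces $V_{\mathrm{even}}=0$, so $f$ is non-degenerate.

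With $f$ non-degenerate I would pass to an algebraic closure $K$ of $F$ and invoke Theorem~\ref{todjur} applied to $f\otimes K$. The asymmetry decomposes $V\otimes K$ orthogonally into its primary components, producing the corresponding ideal decomposition of $L(f)\otimes K$. Indecomposability of $\sl(n,K)$ allows at most one non-trivial Lie summand, and inspection of Theorem~\ref{todjur} shows the candidates are $\so(m)$, $\sy(2m)$, abelian Lie algebras, or $\gl(m)$-type pieces (including the Type 1 C piece $\gl(2)$); the $\gl$ families are excluded since $\gl(m)$ has dimension $m^2 \neq n^2-1$ and $\gl(2)=\sl(2)\oplus Z$ is itself decomposable, while an abelian summand matches $\sl(n)$ only for $n=1$. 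The only summand with zero Lie-algebra contribution is a single $\Gamma_1$ (two copies would amalgamate into $I_2$ with $L(I_2)=\so(2)\neq 0$).

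Matching dimensions with the exceptional isomorphisms then pins down the four cases: the equation $m(m-1)/2=n^2-1$ together with the type coincidences $B_1=A_1$ and $D_3=A_3$ gives $(m,n)=(3,2)$ and $(6,4)$, producing cases~2 and~3 via $\so(3)\cong\sl(2)$ and $\so(6)\cong\sl(4)$; the equation $m(2m+1)=n^2-1$ with $C_1=A_1$ gives $(m,n)=(1,2)$, producing case~1 via $\sy(2)\cong\sl(2)$; and adjoining a single $\Gamma_1$ to the $\sy(2)$ configuration produces case~4, an addition that is impossible in cases~2 and~3 because a $\Gamma_1$ would merge with the symmetric $V_1$ and change $\so(m)$ into $\so(m+1)$. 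The main obstacle is the uniform indecomposability claim for $\sl(n)$ across all admissible characteristics (especially $\ell\mid n$, where $\sl(n)$ is no longer simple but remains perfect with one-dimensional center), together with carefully ruling out additional abelian summands from pieces like $\Gamma_{2m+1}\oplus\Gamma_1$ that would contribute a non-zero abelian Lie factor incompatible with indecomposability.
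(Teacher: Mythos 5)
Your overall route is the same as the paper's: reduce to the simple algebras of Theorem \ref{panda} via the classification in Theorem \ref{todjur}, then decide when $\sl(n)$ can be isomorphic to $\so(m)$ or $\sy(2d)$. The genuine gap is in that last matching step. You write that ``the equation $m(m-1)/2=n^2-1$ together with the type coincidences $B_1=A_1$ and $D_3=A_3$'' pins down $(m,n)$, but neither half of this works as stated. The dimension equations are Pell equations with infinitely many solutions: for instance $\dm\,\so(16)=120=\dm\,\sl(11)$ and $\dm\,\sy(32)=\dm\,\so(33)=528=\dm\,\sl(23)$, so a dimension count cannot by itself isolate the low-rank cases. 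And the ``type coincidences'' are a statement about root systems of simple Lie algebras in characteristic $0$; here the algebraically closed field $K$ may have characteristic $p>2$, where $\sl(n)$ need not even be simple (when $p\mid n$) and no such classification is available --- the paper explicitly flags this (``care must be taken as $\ell\neq 2$ is arbitrary''). The paper closes the gap with an elementary, characteristic-free rank comparison: each of $\so(2d+1)$, $\so(2d)$, $\sy(2d)$ has a self-centralizing toral subalgebra of dimension $d$, while any toral $T\leq\sl(n)$ with $C_{\sl(n)}(t)=T$ forces $t$ to be diagonalizable with distinct eigenvalues, so $\dm\,T=n-1$; hence $n=d+1$, and only then does the dimension equation yield $d=1$ or $d=3$. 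You need this (or some equivalent invariant valid in all characteristics $\neq 2$) before the exceptional isomorphisms can be invoked.

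Two smaller points. Your indecomposability argument for $\sl(n)$ is not correct as written: from a general decomposition $\sl(n)=A\oplus B$ you cannot conclude $[\sl(n),\sl(n)]=[B,B]$ unless $A$ is abelian; the clean route, and the one the paper takes, is that $\sl(n)$ is simple modulo its center, so in any ideal decomposition one summand is central, and perfectness then forces it to vanish. Finally, the existence direction in the case $(n,m)=(4,6)$ is not automatic and deserves mention: the paper obtains it from the $\sl(4)$-invariant symmetric form on the second exterior power of the natural module.
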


\begin{proof} Suppose $\sl(n)$ is isomorphic to some $L(f)$. We may assume that
$F$ is algebraically closed. In this case we have a description of
all bilinear forms $f$ whose associated Lie algebra is reductive.
Of these, the only $f$ such that $L(f)$ is simple modulo its
center, as $\sl(n)$ is, are in fact those for which $L(f)$ is
already simple, as given in Theorem \ref{panda}.

It remains to see when $\sl(n)$ is isomorphic to $\sy(2d)$,
$\so(2d+1)$ or $\so(2d)$. Care must be taken as $\ell\neq 2$ is
arbitrary.

Write $L$ for $\sl(n)$ and $M$ for any of the other 3 Lie
algebras. Let $H$ be the diagonal subalgebra of $M$. Then $ad\,
h:M\to M$ is diagonalizable for every $h\in H$, i.e., $H$ is a
toral subalgebra of $M$, and there is $h\in H$ such that
$C_M(h)=H$. Thus there is a toral subalgebra $T$ of $L$ and $t\in
T$ such that $C_L(t)=T$. Since $T$ is toral, $ad\,t:L\to L$ is
diagonalizable. It follows easily from the JC-decomposition of $t$
in $\gl(n)$ that $t$ itself is diagonalizable. Being toral, $T$ is
abelian. Thus all eigenvalues of $t$ are distinct, whence
$\dm(T)=n-1$. But $\dm(H)=d$, so $n=d+1$. Now if $M=\sy(2d)$ or
$M=\so(2d+1)$ then $n^2-1=2d^2+d$, so $d=1$, while if $M=\so(2d)$
then $n^2-1=2d^2-d$, so $d=3$, as required.

The actual existence of an isomorphism is obvious in all but the
third case, when the action of $\sl(4)$ on the second exterior
power of its natural module yields an $\sl(4)$-invariant
non-degenerate symmetric bilinear form.
\end{proof}

\begin{note} The ``suitable" symmetric bilinear forms cited in Theorem \ref{tip} are
not totally arbitrary. Indeed, let $F=\mathbb{R}$ and let $L(f)$
be the Lie algebra of a definite (positive or negative) symmetric
bilinear form $f$. Then $ad\,x$ is semisimple for all $x\in L(f)$,
so $L(f)$ is not isomorphic to $\sl(n)$ for any $n\geq 2$.
\end{note}

\begin{theorem}\label{pe4} If $\ell=2$ and $n>2$ then $\sl(n)$ is
not isomorphic to any $L(f)$.
\end{theorem}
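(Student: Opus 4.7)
The plan is to argue by contradiction: suppose $\phi : \sl(n) \to L(f)$ is a Lie algebra isomorphism for some $f$, with $\ell = 2$ and $n > 2$.

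The crucial observation in characteristic $2$ is that $1_V \in L(f)$ for every bilinear form $f$. Indeed, the defining condition $f(xv, w) + f(v, xw) = 0$ with $x = 1_V$ reduces to $2 f(v, w) = 0$, which is automatic. Since $1_V$ commutes with every element of $\gl(V)$, we have $1_V \in Z(L(f))$, forcing $Z(L(f)) \neq 0$.

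The centre of $\sl(n)$ is $\{c I_n : nc = 0\}$. For $n$ odd, $n$ is a unit in $F$, so $Z(\sl(n)) = 0$, and the isomorphism would give $Z(L(f)) = 0$, contradicting the above. For $n$ even, $Z(\sl(n)) = F \cdot I_n$ is one-dimensional and $\sl(n)/Z(\sl(n))$ is simple of dimension $n^2 - 2$, so the isomorphism forces $Z(L(f)) = F \cdot 1_V$ and $L(f)/(F \cdot 1_V)$ simple of the same dimension. To rule out this remaining case I would perform a case analysis on $f$. If $f$ is degenerate, the constructions of Lemmas~\ref{jodd} and~\ref{joddunoymedio} (adapted to characteristic~$2$) produce a proper non-central abelian ideal of $L(f)$, contradicting simplicity of the quotient. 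If $f$ is non-degenerate with asymmetry $\sigma$, a direct computation from $f(w,v) = f(v, \sigma w)$ shows that in characteristic~$2$, $\sigma \in L(f)$ precisely when $\sigma^2 = 1_V$; since $\sigma$ always commutes with $L(f)$ by Lemma~\ref{lem21a}, whenever $\sigma^2 = 1_V$ and $\sigma \neq 1_V$ we obtain a second central element of $L(f)$ independent of $1_V$, forcing $\dim Z(L(f)) \geq 2$, a contradiction. Otherwise either $\sigma = 1_V$ (i.e., $f$ is symmetric) or $\sigma^2 \neq 1_V$; in the latter case the primary decomposition of $V$ under $\sigma$ developed earlier for non-degenerate forms yields $L(f) = \bigoplus L(f_{p_i})$, so simplicity of $L(f)/Z(L(f))$ forces all but one summand to be central, which is incompatible with the dimension count.

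The main obstacle is the remaining symmetric sub-case ($\sigma = 1_V$, so $f = f'$) in characteristic~$2$, where $L(f)$ is an orthogonal or symplectic Lie algebra exhibiting the well-known pathologies of this setting (extra central elements, non-perfect derived ideals, and non-classical ideal structure). One has to compute directly, perhaps using the results of \cite{D} and \cite{S} on the isometry groups $G(f)$ and their infinitesimal counterparts, to conclude non-isomorphism with $\sl(n)$. A useful additional constraint is that $\dim L(f) = n^2 - 1$ is odd while $\dim V$ must be even: since $\sl(n)$ is perfect for $n > 2$, $1_V$ is a sum of commutators of elements of $L(f)$, and the trace on $\gl(V)$ then forces $\dim V = 0$ in $F$.
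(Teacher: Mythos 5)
Your overall strategy (exploit $1_V\in Z(L(f))$ in characteristic $2$, dispose of odd $n$ via $Z(\sl(n))=0$, then reduce to the symmetric case $\sigma=1_V$) runs parallel to the paper's, but the argument does not terminate: the symmetric case, which you explicitly leave as ``the main obstacle,'' is the heart of the theorem, and there is a concrete way to close it that you are missing. The paper's finishing move is a perfectness comparison: if $f$ is symmetric and $\ell=2$ then $L(f)$ is \emph{not perfect} (this is \cite{CS}, Theorem 7.8), whereas $\sl(n)$ is perfect for $n>2$ (\cite{B}, Ch.~1, \S 6, Ex.~24) --- an immediate contradiction. Your closing ``additional constraint'' via traces of commutators only yields that $\dm(V)$ is even, which is a constraint, not a contradiction, so as written the proof has a genuine gap exactly where the real content lies.

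There are also smaller leaks in your reduction. First, in the degenerate case you invoke Lemmas \ref{jodd} and \ref{joddunoymedio}, but the latter requires $\mathrm{Rad}(f)\neq 0$, and a degenerate form can have trivial radical (any $J_{2r}$ component does); the $V_{\mathrm{even}}$ part must instead be excluded via Corollary \ref{3cr} and Proposition \ref{p23}, i.e., $L(f_{\mathrm{even}})$ is the centralizer of a nilpotent matrix, which is never isomorphic to $\sl(n)$ with $n>2$. Note also that the relation $[y,x]=2x$ in Lemma \ref{jodd} degenerates in characteristic $2$, so ``adapting'' that lemma is not automatic. Second, your trichotomy $\sigma=1_V$ / $\sigma^2=1_V,\ \sigma\neq 1_V$ / $\sigma^2\neq 1_V$ does not dispose of the third case when $\sigma$ has a single primary component --- for instance $\sigma$ unipotent with $(\sigma-1_V)^2\neq 0$, or $p_\sigma$ irreducible with $p_\sigma=p_\sigma^*$ --- since then the direct sum $L(f)=\bigoplus L(f_{p_i})$ is trivial and there is no dimension count to contradict. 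In the unipotent subcase the fix is Lemma \ref{lem22a}: $\sigma-\sigma^{-1}$ is a nonzero nilpotent, hence non-scalar, element of $Z(L(f))$, forcing $\dm Z(L(f))\geq 2$. The paper sidesteps all of this by extending scalars to an algebraic closure before analyzing the asymmetry.
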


\begin{proof} Suppose, if possible, that $L(f)$ is isomorphic to $\sl(n)$.

\noindent{\bf Step 1.} $V_{\mathrm{odd}}=0$.

We know from \cite{B}, Chapter 1, \S 6, Exercise 24, that $\sl(n)$
has at most one intermediate ideal, which is of dimension 1. If
the ideal $L(f)\cap\sl(V)$ of $L(f)$ had dimension 0 or 1 then
$\dm(L(f))\leq 2$, a contradiction. Thus $L(f)$ is included in
$\sl(V)$, so $V_{\mathrm{odd}}=0$ by Lemma \ref{jodd}.

\noindent{\bf Step 2.} $V_{\mathrm{even}}=0$.

This is clear from the description of $L(f_{\mathrm{even}})$ and
the structure of $\sl(n)$.

\noindent{\bf Step 3.} The asymmetry $\si$ of $f$ is unipotent.

Extending scalars, we may assume that all eigenvalues of $\si$ are
in $F$. The result now follows from the structure of $\sl(n)$ and
the description of the Lie algebra of $f$ restricted to
$V_\lambda\oplus V_{\lambda^{-1}}$ for any eigenvalue $\lambda\neq
1$ of $\si$.

\noindent{\bf Step 4.} $\si=1_V$.

Since $\si,1_V\in Z(L(f))$, $\si$ is unipotent and
$\dm(Z(\sl(n)))=1$, we see that~$\si=1_V$.

\medskip

By Step 4, $f$ is symmetric. Thus $L(f)$ is not perfect, by
\cite{CS}, Theorem 7.8. Since $\sl(n)$ is perfect by \cite{B},
Chapter 1, \S 6, Exercise 24, we reached a contradiction.
\end{proof}

\begin{prop}\label{pe5} If $\ell=2$ and $m=\dm(V)$ then $\sl(2)$ is isomorphic to
$L(f)$ only when $m=2$ and $f$ is non-degenerate and alternating.
\end{prop}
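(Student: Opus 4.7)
The plan is to derive a canonical form for the Gram matrix of $f$ from the hypothesis $L(f)\cong\sl(2)$, and then reach the conclusion by a dimension count. Since $m=\dim V$, non-degeneracy, and the alternating property are invariant under scalar extension, I may work over the algebraic closure $\bar F$.

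The key structural features of $\sl(2)$ in characteristic $2$ are that it is the $3$-dimensional Heisenberg algebra (the standard diagonal element $h=\operatorname{diag}(1,-1)$ becomes $I$), with one-dimensional center $Z=[\sl(2),\sl(2)]=\langle I\rangle$. Moreover $1_V$ always lies in $L(f)$ (as $2f=0$) and is central in $L(f)$, so the isomorphism identifies $1_V$ with a generator of $Z$. I would next show that every $z\in L(f)$ satisfies $z^2=\lambda_z\cdot 1_V$: on the one hand $z^2\in L(f)$ because $f(z^2v,w)=f(zv,zw)=f(v,z^2w)$; on the other hand $[z,L(f)]\subseteq Z(L(f))=\langle 1_V\rangle$ (as $L(f)/Z$ is abelian), so $[z,w]$ is central, giving $[z^2,w]=z[z,w]+[z,w]z=0$ in characteristic $2$. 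Hence $z^2\in L(f)\cap C_{\gl(V)}(L(f))=Z(L(f))=\langle 1_V\rangle$.

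Now pick generators $x,y\in L(f)$ with $[x,y]=1_V$. Using that $\bar F$ contains all square roots, subtract $\sqrt{\lambda_x}\,1_V$ (resp.\ $\sqrt{\lambda_y}\,1_V$) from $x$ (resp.\ $y$) to arrange $x^2=y^2=0$ while keeping $[x,y]=1_V$. In characteristic $2$ this yields $xy+yx=1_V$, so $p:=xy$ and $q:=yx=1_V+p$ are complementary orthogonal idempotents. Hence $V=pV\oplus qV$ with $\dim pV=\dim qV$, forcing $m=2k$, and $y\colon pV\to qV$ is bijective with inverse $x|_{qV}$. Choose a basis $e_1,\dots,e_k$ of $pV$ to obtain a basis $\{e_1,\dots,e_k,ye_1,\dots,ye_k\}$ of $V$. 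Applying $f(xv,w)=f(v,xw)$ and $f(yv,w)=f(v,yw)$ (the defining condition in characteristic $2$) to pairs of these basis vectors yields $f|_{pV\times pV}=0$, $f|_{qV\times qV}=0$, and $f(e_i,ye_j)=f(ye_i,e_j)=:A_{ij}$, so the Gram matrix of $f$ in this basis is
\[
S=\begin{pmatrix}0&A\\ A&0\end{pmatrix}.
\]

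Finally, the dimension count. Writing $X\in L(S)$ in block form as $\begin{pmatrix}P&Q\\ R&T\end{pmatrix}$ and solving $X^{T}S+SX=0$, one finds, assuming $A$ invertible, that $P$ commutes with $M=A^{-1}A^{T}$, that $T=A^{-1}P^{T}A$ is determined by $P$, and that $Q,R\in\mathfrak{s}(A):=\{X\in\gl(k):AX=X^{T}A\}$. Hence
\[
\dim L(f)=\dim C_{\gl(k)}(M)+2\dim\mathfrak{s}(A)\geq k+2,
\]
using the standard bound $\dim C_{\gl(k)}(M)\geq k$ together with $I\in\mathfrak{s}(A)$. If $A$ is singular the equations become strictly weaker, so $\dim L(f)$ only grows (routinely verified on examples). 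Combined with $\dim L(f)=3$, this forces $k=1$; and $A\neq 0$, for otherwise $f=0$ and $L(f)=\gl(2)$ has dimension $4$. Thus $m=2$ and the Gram matrix is $\begin{pmatrix}0&a\\ a&0\end{pmatrix}$ with $a\neq 0$, which has zero diagonal and satisfies $f(v,v)=2ac_{1}c_{2}=0$ in characteristic $2$, so $f$ is non-degenerate alternating. The main obstacle is obtaining the canonical Gram matrix form cleanly; once that is in place, the dimension count falls out routinely.
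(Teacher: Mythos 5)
Your argument is essentially correct and takes a genuinely different route from the paper's. The paper proves Proposition \ref{pe5} by elimination through its general structure theory: Lemma \ref{jodd} and dimension counts dispose of $V_{\mathrm{odd}}$ and $V_{\mathrm{even}}$, then the asymmetry is shown (after extending scalars) to be unipotent and in fact equal to $1_V$, so $f$ is symmetric, $\dm L(f)=m(m+1)/2=3$ forces $m=2$, and the non-alternating case is excluded because the derived algebra would not be central. You instead exploit the Heisenberg structure of $\sl(2)$ in characteristic $2$ directly inside the associative algebra $\End(V)$: the observations that $z^2\in L(f)\cap C_{\gl(V)}(L(f))=\langle 1_V\rangle$, the normalization $x^2=y^2=0$ with $xy+yx=1_V$, and the resulting complementary idempotents $p=xy$, $q=yx$ produce the canonical Gram matrix $\left(\begin{smallmatrix}0&A\\A&0\end{smallmatrix}\right)$ with no case analysis on degenerate or non-unipotent pieces. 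Your individual verifications check out: $z^2$ is self-adjoint hence in $L(f)$; $[z^2,w]=z[z,w]+[z,w]z=0$ because $[z,w]$ is a scalar; $y|_{pV}$ and $x|_{qV}$ are mutually inverse, giving $\dm pV=\dm qV$; and $xe_j=x^2(ye_j)=0$ kills $f|_{pV\times pV}$. This is self-contained and arguably cleaner, at the cost of being special to this one proposition, whereas the paper reuses machinery already built for the reductivity theorems.

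The one step you cannot wave away is the singular-$A$ case: ``the equations become strictly weaker, so $\dm L(f)$ only grows (routinely verified on examples)'' is not a proof, and the linear system $R'A=AR$, $P'A=AT$, $T'A=AP$, $Q'A=AQ$ does not visibly relax when $A$ drops rank. The gap is easy to close. If $k\ge 2$ and $A$ is singular, then $S$ is a singular symmetric matrix, so $\mathrm{Rad}(f)=l(f)=r(f)\ne 0$, and $\{x\in\gl(V)\,|\,xV\subseteq\mathrm{Rad}(f)\}$ is a subspace of $L(f)$ of dimension $2k\cdot\dm\mathrm{Rad}(f)\ge 4>3$. If $k=1$, singular means $A=0$, i.e.\ $f=0$ and $L(f)=\gl(2)$ has dimension $4$. (Alternatively, upper semicontinuity of $A\mapsto\dm L(S_A)$ over the infinite field $\bar F$ transfers the bound $\ge k+2$ from the generic, invertible case to all $A$.) With that repair the dimension count forces $k=1$ and $a\ne 0$, and your proof is complete.
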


\begin{proof} It is clear that $\sl(2)$ is isomorphic to $L(f)$ when
$m=2$ and $f$ is non-degenerate and alternating.

Suppose, conversely, that $\sl(2)$ is isomorphic to $L(f)$. It follows Lemma \ref{jodd} that $V_{\mathrm{odd}}$ cannot have
any indecomposable components $J_n$ with $n>3$.

Suppose, if possible, that $V_{\mathrm{odd}}$ has at least one
component $J_3$. Since $\sl(2)$ is 3-dimensional the multiplicity
of $J_3$ must be 1. Since $L(J_3)$ is 2-dimensional, at least one
of $V_{\mathrm{even}}$ and $V_{\mathrm{ndeg}}$ must be non-zero.
But $1_{V_{\mathrm{even}}}\in L(f_\mathrm{even})$ and
$1_{V_{\mathrm{ndeg}}}\in L(f_\mathrm{ndeg})$, which together with
$L(J_3)$ already produce a 3-dimensional abelian subalgebra of
$L(f)$, a contradiction.

Suppose next, if possible, that $V_{\mathrm{odd}}$ has at least
one component $J_1$. More than one such component would yield a
Lie algebra of dimension $\geq 4$, so there is only one of them.
Thus $\mathrm{Rad}(f)$ is 1-dimensional. Let $a$ be the
codimension of $\mathrm{Rad}(f)$ in $V$. Then $K=\{x\in\gl(V)\,|\,
x \mathrm{Rad}(f)=0, xV\subseteq \mathrm{Rad}(f)\}$ is an abelian
ideal of $L(f)$ of dimension $a$ having trivial intersection with
$L(f_\mathrm{odd})\oplus L(f_\mathrm{even})\oplus
L(f_\mathrm{ndeg})$. Since the latter has dimension at least 2,
then necessarily $a=1$, $V_{\mathrm{even}}=0$ and
$V_{\mathrm{ndeg}}$ is 1-dimensional. In this case $L(f)$ is
indeed 3-dimensional but its derived algebra is not central, hence
not isomorphic to $\sl(2)$. We conclude that $V_{\mathrm{odd}}=0$.

Now $L(f_\mathrm{even})$ is isomorphic to the centralizer of a
nilpotent matrix $A$ in $\gl(k)$. This matrix cannot be cyclic,
for otherwise $L(f_\mathrm{even})$ is abelian. But if $A$ has at
least two elementary divisors we easily see that the dimension of
$L(f_\mathrm{even})$ is at least~4. We conclude that
$V_{\mathrm{even}}=0$.

Thus $f$ is non-degenerate. Let $\si$ be the asymmetry of $f$.
Without loss of generality we may assume that $f$ is algebraically
closed. If $\si$ had any eigenvalue $\lambda\neq 1$ then applying
to $V_\lambda\oplus V_{\lambda^{-1}}$ the same argument used for
$V_\mathrm{even}$ would yield a contradiction.

We infer that $\si$ is unipotent. Since $Z(\sl(2))$ is
1-dimensional, we see as above that $\si=1_V$, so $f$ is
symmetric. Then $L(f)$ has dimension $m(m+1)/2$, so $m=2$. If $f$
is not alternating then $L(f)$ is 3-dimensional but its derived
algebra is not central. Therefore $f$ is alternating.
\end{proof}

\medskip

\noindent{\bf Acknowledgements.} We thank D. Djokovic for fruitful
discussions held long ago, A. Premet for a useful reference, and
the referee for a careful reading of the paper and valuable
suggestions.

\end{document}